\newcommand\new[1]{}
\theoremstyle{plain}
\newtheorem{theorem}{Theorem}[section]
\theoremstyle{remark}
\newtheorem{rem}[theorem]{Remark}
\newtheorem{ex}[theorem]{Example}
\theoremstyle{definition}
\newtheorem{defi}[theorem]{Definition}
\theoremstyle{plain}
\newtheorem{cor}[theorem]{Corollary}
\newtheorem{lem}[theorem]{Lemma}
\newtheorem{prop}[theorem]{Proposition}
\newtheorem{Assumption}{Assumption}
\numberwithin{equation}{section}
\def\imbed{\hookrightarrow}
\newcommand{\rf}[1]{(\ref{#1})}
\newcommand\norm[1]{\left|\!\left| #1\right|\!\right|}
\newcommand\BIP{\operatorname{BIP}}
\newcommand\sgn{\operatorname{sgn}}
\newcommand\Vertt{|\!|}
\newcommand{\U}{\mathcal{U}}
\newcommand{\M}{\mathcal{M}}
\newcommand{\tv}{\tilde{v}}
\newcommand{\tq}{\tilde{q}}
\newcommand{\tz}{\tilde{z}}
\newcommand{\tX}{\tilde{X}}
\newcommand{\tf}{\tilde{f}}
\newcommand{\lab}[1]{\label{#1}}
\newcommand{\cit}[1]{\cite{[#1]}}
\def\frakJ{\mathfrak{J}}
\def\R{{\mathds R}}
\def\Re{\mbox{Re\,}}
\newcommand{\tFil}{\tilde{\mathcal{F}}}
\newcommand{\tProb}{\tilde{\mathds{P}}}
\newcommand{\tExp}{\tilde{\mathds{E}}}
\newcommand{\Exp}{\mathds{E}}
\newcommand{\tOmega}{\tilde{\Omega}}
\newcommand{\Prob}{\mathds{P}}
\newcommand{\tlambda}{\tilde{\lambda}}
\newcommand{\tM}{\tilde{M}}
\newcommand{\Y}{\mathcal{Y}}
\newcommand{\Lin}{\mathcal{L}}
\newcommand{\abs}[1]{\left\vert#1\right\vert}
\newcommand{\set}[1]{\left\{#1\right\}}
\newcommand{\seq}[1]{\left<#1\right>}
\newcommand{\bE}{\mathbf{E}}
\newcommand{\bH}{\mathbf{H}}
\newcommand{\B}{\mathcal{B}}
\newcommand{\A}{\mathcal{A}}
\newcommand{\Fil}{\mathcal{F}}
\newcounter{notectr}
\renewcommand{\thenotectr}{\arabic{notectr}}
{\refstepcounter{notectr}
\noindent
{\it  Note {\thenotectr}}
\em \normalsize}{}%
\newcounter{dig}
{\bigskip \noindent\large \sl Digression \arabic{dig}
 \small\rm }
{\stepcounter{dig}{\hfill}
}
\newcounter{remctr}
\renewcommand{\theremctr}{\arabic{remctr}}
{\refstepcounter{remctr}
\noindent
\it  Remarks {\theremctr}:%
\normalsize\em}{}%
\newcounter{condctr}
\renewcommand{\thecondctr}{\Alph{condctr}}
\newcounter{subcondctr}[condctr]
\title[Relaxed control of stochastic evolution equations]{Optimal relaxed control of dissipative stochastic partial differential equations in Banach spaces}
\author[Z.~Brzezniak]{Zdzislaw Brzezniak}
\address{Department of Mathematics\\ University of York\\ Heslington, York UK\\ YO10 5DD}
\email{zb500@york.ac.uk}
\author[R.~Serrano]{Rafael Serrano}
\address{Department of Mathematics\\ University of York\\ Heslington, York UK\\ YO10 5DD}
\email{rasp500@york.ac.uk}
\urladdr{http://www-users.york.ac.uk/~rasp500}
\thanks{The second named author is supported by a Dorothy Hodgkin Postgraduate Award}
\date{\today}
\subjclass[2000]{Primary 60H15; Secondary 49J20,93E20}
\begin{document}
\begin{abstract}
We study an optimal relaxed control problem for a class of semilinear stochastic PDEs on Banach spaces perturbed by multiplicative noise and driven by a cylindrical Wiener process. The state equation is controlled through the nonlinear part of the drift coefficient which satisfies a dissipative-type condition with respect to the state variable. The main tools of our study are the factorization method for stochastic convolutions in UMD type-2 Banach spaces and certain compactness properties of the factorization operator and of the class of Young measures on Suslin metrisable control sets.
\end{abstract}

\bibliographystyle{amsalpha}

\maketitle


\section{Introduction}
Let $M$ be a separable metric space, $B$ a Banach space and $(\Omega,\Fil,\Prob)$ a probability space. The object of this paper is to study the optimal control problem of minimizing a finite-horizon cost functional of the form
\begin{equation}\label{cf}
J(X,u)=\Exp\left[\int_0^T h(s,X(s),u(s))\,ds+\varphi(X(T))\right],
\end{equation}
where $u(\cdot)$ is a $M-$valued control process and $X(\cdot)$ is the solution (in a sense we will specify later) of the controlled semilinear stochastic evolution equation of the form
\begin{equation}\label{eq0}
\begin{split}
dX(t)+AX(t)\,dt&=F(t,X(t),u(t))\,dt+G(t,X(t))\,dW(t), \ \ \ t\in
[0,T]\\
X(0)&=x_0\in B.
\end{split}
\end{equation}
Here $-A$ is the generator of a $C_0-$semigroup on $B$ and $W(\cdot)$ is a cylindrical Wiener process defined on $(\Omega,\Fil,\Prob).$ Our model equation is the following controlled stochastic partial differential equation of reaction-diffusion type with multiplicative space-time white noise on $[0,T]\times (0,1),$
\begin{align*}
\frac{\partial X}{\partial t}(t,\xi)&=\frac{\partial^2X}{\partial\xi^2}(t,\xi)+f(X(t,\xi),u(t))+g(X(t,\xi))\,\frac{\partial w}{\partial t}(t,\xi), \ \ \text{on }
[0,T]\times (0,1),\\
X(t,0)&=X(t,1)=0,\\
X(0,\cdot)&=x_0(\cdot),
\end{align*}
where $g:\R\to\R$ is continuous and bounded and the reaction term $f:\R\times M\to\R$ is continuous and satisfies a polynomial-growth condition of the form
\begin{equation}\label{pgc}
f(x+y,u)\sgn x\le -k_1\abs{x}+k_2\abs{y}^m+\eta(t,u)
\end{equation}
where $m\geq 1,$ $k_1\in\R, \ k_2\geq 0$ and $\eta:[0,T]\times M\to[0,+\infty]$ is a measurable mapping, possibly inf-compact with respect to $u$ (see Definition \ref{infcompact} below). In the applications, $X(t,\xi)$ represents the concentration, density or temperature of a certain substance. Our aim is to be able to consider cost functionals that regulate this quantity on a finite number of points $\zeta_1,\ldots,\zeta_n$ distributed over the interval $(0,1),$ for instance, when the running cost function $h$ has of the form
\[
h(t,x,u)=\phi(t,x(\zeta_1),\ldots,x(\zeta_n),u).
\]
Such running costs, however, require $X(t,\xi)$ to be continuous with respect to the space variable, which suggests that we study the controlled equation on the Banach space $\mathcal{C}([0,1])$ of real-valued continuous functions on $[0,1].$ Moreover, the growth condition (\ref{pgc}) can be conveniently formulated as a dissipative-type condition on this Banach space.

It is well known that, when no special conditions on the the dependence of the non-linear term $F$ with respect to the control variable are assumed,  in order to prove existence of an optimal control it is necessary to extend the original control system to one that allows for control policies whose instantaneous values are probability measures on the control set. Such control policies are known as \emph{relaxed controls.}

This technique of measure-valued convexification of nonlinear systems has a long story starting with L.C.Young \cite{young1,youngbook} and J.Warga \cite{warga1,wargabook} and their work on variational problems and existence of optimal controls for finite-dimensional systems. The use of relaxed controls in the context of evolution equations in Banach spaces was initiated by Ahmed \cite{ahmed1} and Papageorgiou \cite{papag2,papag1} (see also \cite{papag5}) who considered controls that take values in a Polish space. More recently, optimal relaxed control of PDEs has been studied by Lou in \cite{lou1,lou2} also with Polish control set.

Under more general topological assumptions on the control set, Fattorini also employed relaxed controls in \cite{fattorini2} and \cite{fattorini1} (se also \cite{fattorinibook}) but at the cost of working with merely finitely additive measures instead of $\sigma-$additive measures.

In the stochastic case, relaxed control of finite-dimensional stochastic systems goes back to Fleming and Nisio \cite{fleming1,flenisio}. Their approach was followed extensively in \cite{elkarouietal} and \cite{hausslep}, where the control problem was recast as a martingale problem. The study of relaxed control for stochastic PDEs seems to have been initiated by Nagase and Nisio in \cite{nagasenisio} and continued by Zhou in \cite{zhou1}, where a class of semilinear stochastic PDEs controlled through the coefficients of an elliptic operator and driven by a $d-$dimensional Wiener process is considered, and in \cite{zhou2}, where controls are allowed to be space-dependent and the diffusion term is a first-order differential operator driven by a one-dimensional Wiener process.

In \cite{gatsob}, using the semigroup approach, Gatarek and Sobczyk extended some of the results described above to Hilbert space-valued controlled diffusions driven by a trace-class noise. The main idea of their approach is to show compactness of the space of admissible relaxed control policies by the factorization method introduced by Da Prato, Kwapien and Zabczyk (see \cite{dapratoetal}). Later, in \cite{sri} Sritharan studied optimal relaxed control of stochastic Navier-Stokes equations with monotone nonlinearities and Lusin metrisable control set. More recently, Cutland and Grzesiak combined relaxed controls with nonstandard analysis techniques in \cite{cutgr1,cutgr2} to study existence of optimal controls for 3 and 2-dimensional stochastic Navier-Stokes equations respectively.

In this paper, we consider a control system and use methods that are similar to those of \cite{gatsob}. However, our approach allows to consider controlled processes with values in a larger class of state spaces, which permits to study running costs that are not necessarily well-defined in a Hilbert-space framework. Moreover, we consider controlled equations driven by cylindrical Wiener process, which includes the case of space-time white noise in one dimension, and with a drift coefficient that satisfies a dissipative-type condition with respect to the state-variable. In addition, the control set is assumed only metrisable and Suslin.

Let us briefly describe the contents of this paper. In section 2 we recall the notion of stochastic relaxed control and its connection with random Young measures, we define the stable topology and review some relatively recent results on (flexible) tightness criteria for relative compactness in this topology. Next, we introduce the factorization operator as the negative fractional power of a certain abstract parabolic operator associated with a Cauchy problem on UMD spaces and some of its smoothing and compactness properties. Then, we review some basics results on the factorization method for stochastic convolutions in UMD type-2 Banach spaces.

In section 3 we reformulate the control problem as a relaxed control problem in the weak stochastic sense and prove existence of optimal weak relaxed controls for a class of dissipative stochastic PDEs. Finally, we illustrate this result with examples that cover a class of stochastic reaction-diffusion equations (driven by space-time multiplicative white noise in dimension 1) and include the case of space-dependant control.

\noindent\textbf{Notation.} Let $\mathcal{O}$ be a bounded domain in $\mathds{R}^d.$ For $m\in\mathds{N}$ and $p\in[1,\infty],$ $W^{m,p}(\mathcal{O})$ will denote the usual Sobolev space, and for $s\in\R,$ $H^{s,p}(\mathcal{O})$ will denote the space defined as
\[
H^{s,p}(\mathcal{O}):=
\left\{
  \begin{array}{ll}
    W^{m,p}(\mathcal{O}), & \ \hbox{if  \ \ $m\in\mathds{N}$;}\\
    \left[W^{k,p}(\mathcal{O}),W^{m,p}(\mathcal{O})\right]_\delta, & \ \text{if} \ \ s\in(0,\infty)\setminus\mathds{N},
  \end{array}
\right.
\]
where $[\cdot,\cdot]_\delta$ denotes complex interpolation and $k,m\in\mathds{N}, \ \delta\in(0,1)$ are chosen to satisfy $s=(1-\delta)k+\delta m$ (see e.g. \cite{triebel}).

\section{Preliminaries}

\subsection{Relaxed controls and Young measures}
We start by recalling the definition of stochastic relaxed control and its connection with random Young measures. Throughout, $M$ denotes a Hausdorff topological space (the control set), $\B(M)$ denotes the Borel $\sigma-$algebra on $M$ and $\mathcal{P}(M)$ denotes the set of probability measures on $\B(M)$ endowed with the $\sigma-$algebra generated by the projection maps
\[
\pi_C:\mathcal{P}(M)\ni q\mapsto q(C)\in [0,1], \ \ \ C\in\B(M).
\]
\begin{defi}
Let $(\Omega,\Fil,\Prob)$ be a probability space. A $\mathcal{P}(M)-$valued process $\set{q_t}_{t\geq 0}$ is called a \emph{stochastic relaxed control} (or relaxed control process) on $M$ if and only if the map
\[
[0,T]\times\Omega\ni (t,\omega)\mapsto q_t(\omega,\cdot)\in\mathcal{P}(M)
\]
is measurable. In other words, a stochastic relaxed control is a measurable $\mathcal{P}(M)-$valued process.
\end{defi}
\begin{defi}
Let $l$ denote the Lebesgue measure on $[0,T]$ and let $\lambda$ be a bounded nonnegative $\sigma-$additive measure on $\B\left(M\times [0,T]\right).$ We say that $\lambda$ is a \emph{Young measure} on $M$ if and only if $\lambda$ satisfies
\begin{equation}\label{youngmeasure}
\lambda(M\times D)=l(D), \ \ \ \text{for all }D\in \B([0,T]).
\end{equation}
We denote by $\Y(0,T;M),$ or simply $\Y,$ the set of Young measures on $M.$
\end{defi}
\begin{lem}[\textbf{Disintegration of `random' Young measures}]\label{Lem:disrym}
Let $(\Omega,\Fil,\Prob)$ be a probability space and let $M$ be a Radon space. Let $\lambda:\Omega\to\Y(0,T;M)$ be such that, for every $J\in\B(M\times [0,T]),$ the mapping
\[
\Omega\ni \omega\mapsto \lambda(w)(J)\in [0,T]
\]
is measurable. Then there exists a stochastic relaxed control $\set{q_t}_{t\geq 0}$ on $M$ such that for $\Prob-$a.e. $\omega\in\Omega$ we have
\begin{equation}\label{disrym}
\lambda(\omega,C\times D)=\int_D q_t(\omega,C)\,dt, \ \ \ \ \text{for all } \ \ C\in\B(M), \ D\in\B([0,T]).
\end{equation}
\end{lem}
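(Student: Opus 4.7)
The plan is to ``glue'' the family $\set{\lambda(\omega)}_{\omega\in\Omega}$ into a single bounded measure on $\Omega\times [0,T]\times M$, apply the classical disintegration theorem for measures on a product with a Radon factor, and then read off the desired kernel. This reduces the ``random'' disintegration to the ``deterministic'' one.

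First, I would define a set function $\Lambda$ on rectangles $A\times D\times C\subset\Omega\times [0,T]\times M$ by
\[
\Lambda(A\times D\times C)\den \int_A \lambda(\omega)(C\times D)\,d\Prob(\omega).
\]
The measurability hypothesis on $\omega\mapsto\lambda(\omega)(J)$ ensures the integrand is well defined, and a standard $\pi$-$\lambda$/monotone class argument extends $\Lambda$ uniquely to a finite measure on $\Fil\otimes\B([0,T])\otimes\B(M)$ of total mass $T$. Using the defining property \rf{youngmeasure} of a Young measure, the marginal of $\Lambda$ on $\Omega\times[0,T]$ equals $\Prob\otimes l$, since
\[
\Lambda(A\times D\times M)=\int_A \lambda(\omega)(M\times D)\,d\Prob(\omega)=\Prob(A)\,l(D).
\]

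Next, because $M$ is a Radon space, the classical disintegration theorem applied to $\Lambda$ with respect to its marginal $\Prob\otimes l$ yields a $(\Fil\otimes\B([0,T]))$-measurable map
\[
(\omega,t)\mapsto q_t(\omega,\cdot)\in\mathcal{P}(M)
\]
such that, for every measurable rectangle,
\[
\Lambda(A\times D\times C)=\int_A\int_D q_t(\omega,C)\,dt\,d\Prob(\omega).
\]
Setting $q_t(\omega)$ to be an arbitrary fixed probability measure on the exceptional null set where the kernel is not defined, we obtain a genuine stochastic relaxed control in the sense of the first definition of the section.

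It remains to recover \rf{disrym} pointwise in $(C,D)$ and $\Prob$-a.s. in $\omega$. Fixing $C,D$ and comparing the two expressions of $\Lambda(A\times D\times C)$ for all $A\in\Fil$ gives
\[
\lambda(\omega,C\times D)=\int_D q_t(\omega,C)\,dt\quad\text{for }\Prob\text{-a.e. }\omega,
\]
where a priori the exceptional $\Prob$-null set depends on $(C,D)$. The main technical step is to promote this to a single null set valid simultaneously for all $C\in\B(M)$ and $D\in\B([0,T])$: this is handled by restricting to a countable generating algebra of $\B(M)\otimes\B([0,T])$ (which exists since $M$ is Suslin metrisable, hence has a countably generated Borel $\sigma$-algebra) and using a monotone class argument together with the fact that both sides of \rf{disrym} define $\sigma$-additive set functions in $(C,D)$. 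This is where I expect the main care is needed, and it is the reason the Radon/Suslin assumption on $M$ is indispensable.
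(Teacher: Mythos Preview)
Your approach is essentially identical to the paper's: both construct the single measure on $\Omega\times[0,T]\times M$ (the paper calls it $\mu$), observe that its marginal on $\Omega\times[0,T]$ is $\Prob\otimes l$, apply the disintegration theorem using the Radon hypothesis on $M$, and then compare integrals over $E\in\Fil$ to recover \rf{disrym}. You are in fact more careful than the paper on the last step---the paper simply writes ``Hence, \rf{disrym} follows'' without discussing how to pass from a $(C,D)$-dependent null set to a single one, whereas you correctly flag the need for a countable generating algebra and a monotone-class argument; just note that this step uses countable generation of $\B(M)$, which comes from separability (e.g.\ metrisable Suslin, as in the rest of the paper) rather than from the bare Radon hypothesis stated in the lemma.
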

\begin{proof}
Define the measure $\mu$ on $\B(M)\otimes\B([0,T])\otimes\Fil$ by
\[
\mu(du,dt,d\omega):=\lambda(\omega)(du,dt)\,\Prob(d\omega),
\]
that is,
\begin{equation}\label{mulambda}
\mu(C\times D\times E)=\Exp\left[1_E\lambda(C\times D)\right], \ \ \ C\in \B(M), \ D\in \B([0,T]), \ E\in\Fil.
\end{equation}
Notice that the marginals of $\mu$ on $\Fil\otimes\B([0,T])$ coincide with the product measure $d\Prob\otimes dt.$ Hence, as $M$ is a Radon space, by the Disintegration Theorem (cf. existence of conditional probabilities, see e.g. \cite{val73}), 
there exists a mapping
\[
\tq:[0,T]\times\Omega\times\B(M)\to [0,1]
\]
satisfying
\begin{equation}\label{mutq}
\mu(C\times J)=\int_{J}\tq(t,\omega,C)\,d\Prob\otimes dt, \ \ \ C\in\B(M), \ J\in\Fil\otimes B([0,T]),
\end{equation}
and such that for every $C\in \B(M),$ the mapping
\[
[0,T]\times\Omega\ni(t,\omega)\mapsto \tq(t,\omega,C)\in [0,1]\]
is measurable and, for almost every $(t,\omega)\in[0,T]\times\Omega$, $\tq(t,\omega,\cdot)$ is a Borel probability measure on $\B(M).$ Therefore
\[
q:[0,T]\times\Omega\ni(t,\omega)\to\tq(t,\omega,\cdot)\in\mathcal{P}(M)
\]
is a stochastic relaxed control. Moreover, by (\ref{mulambda}), (\ref{mutq}) and Fubini's Theorem we have
\[\int_E\lambda(\omega)(C\times D)\,\Prob(d\omega)=\int_E\int_D q(t,\omega)(C)\,dt\,\Prob(d\omega)\]
for every $E\in\Fil$ and $C\in \B(M), \ D\in \B([0,T]).$ Hence, (\ref{disrym}) follows.
\end{proof}
\begin{rem}
We will frequently denote the disintegration (\ref{disrym}) by $\lambda(du,dt)=q_t(du)\,dt.$
\end{rem}

\subsection{Stable topology and tightness criteria}
\begin{defi}
The \emph{stable topology} on $\Y(0,T;M)$ is the weakest topology on $\Y(0,T;M)$ for which the mappings 
\[
\Y(0,T;M)\ni\lambda\mapsto\int_D\int_M f(u)\,\lambda(du,dt)\in\R
\]
are continuous, for every $D\in\B([0,T])$ and $f\in\mathcal{C}_b(M).$
\end{defi}
The stable topology was studied under the name of \emph{ws}-topology in \cite{schael}. There it was proved that if $M$ is separable and metrisable, then the stable topology coincides with the topology induced by the \emph{narrow topology}. The case of $M$ Polish (i.e. separable and completely metrisable) was studied in \cite{JaMe}. A comprehensive overview on the stable topology for a more general class of Young measures under more general topological conditions on $M$ can be found in \cite{cadfval}.

\begin{rem}
It can be proved (see e.g. Remark 3.20 in \cite{crauel}) that if $M$ is separable and metrisable, then $\lambda:\Omega\to\Y(0,T;M)$ is measurable with respect to the Borel $\sigma-$algebra generated by the stable topology iff for every $J\in\B(M\times [0,T])$ the mapping
\[
\Omega\ni \omega\mapsto \lambda(w)(J)\in [0,T]
\]
is measurable. This will justify addressing the maps considered in Lemma \ref{disrym} as random Young measures.
\end{rem}
A class of topological spaces that will be particularly useful for our purposes is that of Suslin space.
\begin{defi}
A Hausdorff topological space $M$ is said to be \emph{Suslin} if there exist a Polish space
$S$ and a continuous mapping $\varphi:S\to M$ such that $\varphi(S)=M.$
\end{defi}
\begin{rem}
If $M$ is Suslin then $M$ is separable and Radon, see e.g. \cite[Chapter II]{schwartz1}. In particular, Lemma \ref{Lem:disrym} applies.
\end{rem}
We will be mainly interested in Young measures on metrisable Suslin control sets. This class of Young measures has been studied in \cite{balder2} and \cite{deFitte1}.
\begin{prop}\label{stsuslinmet}
Let $M$ be metrisable (resp. metrisable Suslin). Then $\Y(0,T;M)$ endowed with the stable topology is also metrisable (resp. metrisable Suslin).
\end{prop}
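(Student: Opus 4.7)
The plan is to construct an explicit compatible metric for the stable topology in the separable metrisable case, and then deduce the Suslin property by representing $\Y(0,T;M)$ as the continuous image of a Polish parameter space $\Y(0,T;S)$, where $\varphi:S\to M$ is a continuous surjection from a Polish space $S$.

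For metrisability I would exploit that $M$ separable metrisable makes $\mathcal{P}(M)$ with the narrow topology separable metrisable, and then fix a countable family $\{f_n\}_{n\geq 1}\subset\mathcal{C}_b(M)$ determining narrow convergence together with a countable algebra $\{D_k\}_{k\geq 1}$ generating $\B([0,T])$. The candidate metric is
\[
d(\lambda,\mu):=\sum_{n,k\geq 1}2^{-n-k}\bigl(|I_{n,k}(\lambda)-I_{n,k}(\mu)|\wedge 1\bigr),\quad I_{n,k}(\lambda):=\int_{D_k}\!\int_M f_n(u)\,\lambda(du,dt),
\]
which is clearly continuous for the stable topology. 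To show $d$ generates the stable topology, for a $d$-convergent sequence $\lambda_j\to\lambda$ I would invoke Lemma \ref{Lem:disrym} (with $\Omega$ a singleton) to disintegrate $\lambda_j(du,dt)=q^{(j)}_t(du)\,dt$; Fubini converts $d$-convergence into $L^1([0,T])$-convergence of $t\mapsto\int f_n\,dq^{(j)}_t$ for every $n$, hence along a subsequence $q^{(j_l)}_t\to q_t$ narrowly for almost every $t$, and bounded convergence upgrades this to $\int_D\int_M f\,d\lambda_{j_l}\to\int_D\int_M f\,d\lambda$ for every $f\in\mathcal{C}_b(M)$ and $D\in\B([0,T])$. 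A sub-subsequence argument removes the need for extraction. Separability of $\Y(0,T;M)$ would then follow from that of $\mathcal{P}(M)$ by approximation with simple functions valued in a countable dense subset.

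In the Suslin case I would define the pushforward
\[
\Phi:\Y(0,T;S)\to\Y(0,T;M),\qquad \Phi(\tilde\lambda)(C\times D):=\tilde\lambda(\varphi^{-1}(C)\times D).
\]
Continuity is immediate since $\int_D\int_M f\,d\Phi(\tilde\lambda)=\int_D\int_S(f\circ\varphi)\,d\tilde\lambda$ with $f\circ\varphi\in\mathcal{C}_b(S)$. Surjectivity of $\Phi$ reduces via disintegration to the existence of a Borel measurable section $\psi:\mathcal{P}(M)\to\mathcal{P}(S)$ of the pushforward $\varphi_\ast:\mathcal{P}(S)\to\mathcal{P}(M)$, which I would obtain from the Kuratowski--Ryll-Nardzewski theorem applied to the closed-valued multifunction $q\mapsto\varphi_\ast^{-1}(\{q\})$ in the Polish space $\mathcal{P}(S)$; composing with a measurable disintegration $t\mapsto q_t$ of any given $\lambda$ produces the lifting $\tilde\lambda$ with $\Phi(\tilde\lambda)=\lambda$. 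Because the metric $d$ constructed above is complete when $S$ is Polish, $\Y(0,T;S)$ is Polish, and hence $\Y(0,T;M)=\Phi(\Y(0,T;S))$ is Suslin.

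The hard part will be the identification of $d$ with the stable topology, since stable convergence is strictly stronger than pointwise narrow convergence of the disintegrations and one cannot test against the countable family ``one at a time''; the subsequence plus bounded-convergence argument built on Lemma \ref{Lem:disrym} is what carries the day. The Suslin step is then largely routine provided one has the measurable lifting, which is the only place where the standard-Borel structure of $\mathcal{P}(S)$ is really needed.
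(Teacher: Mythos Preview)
The paper itself does not give a self-contained argument for this proposition: it simply cites Propositions 2.3.1 and 2.3.3 of \cite{cadfval}. So your proposal is not being compared against a proof in the paper, but against a known result whose standard route is rather different from yours.

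There is a genuine gap in your metrisability argument. From $d(\lambda_j,\lambda)\to 0$ you correctly infer, for each fixed $n$, that
\[
\int_{D_k} g^{(n)}_j(t)\,dt \longrightarrow \int_{D_k} g^{(n)}(t)\,dt \qquad\text{for every }k,
\]
where $g^{(n)}_j(t):=\int_M f_n\,dq^{(j)}_t$. Since the $D_k$ generate $\B([0,T])$ and the $g^{(n)}_j$ are uniformly bounded by $\|f_n\|_\infty$, this yields only \emph{weak} convergence $g^{(n)}_j\rightharpoonup g^{(n)}$ in $L^1(0,T)$ (equivalently, weak-$*$ in $L^\infty$), not norm convergence. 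Weak $L^1$ convergence does not imply almost-everywhere convergence along any subsequence (think of the Rademacher functions), so the step ``hence along a subsequence $q^{(j_l)}_t\to q_t$ narrowly for almost every $t$'' is unjustified and in fact false in general. Without it, your bounded-convergence upgrade to arbitrary $f\in\mathcal{C}_b(M)$ and $D\in\B([0,T])$ collapses.

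The usual way around this, which underlies the references cited in the paper, is to identify $\Y(0,T;M)$ with the subset of $\mathcal{P}([0,T]\times M)$ having Lebesgue first marginal and to show that, on this subset, the narrow topology and the stable topology coincide (the fixed marginal lets one approximate $1_D(t)$ by continuous functions with uniformly controlled error). Metrisability then comes for free from metrisability of the narrow topology on $\mathcal{P}$ of a separable metric space, and for $M$ Polish one gets Polishness of $\Y(0,T;M)$ because the marginal constraint is narrowly closed. Your Suslin step also leans on two unproved points: completeness of your specific metric $d$ when $S$ is Polish, and applicability of Kuratowski--Ryll-Nardzewski to $q\mapsto\varphi_\ast^{-1}(\{q\})$, whose ``measurability'' only gives analytic (not Borel) images $\varphi_\ast(U)$, so at best you obtain a universally measurable selector rather than a Borel one.
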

\begin{proof}
For the metrisability part, see Proposition 2.3.1 in \cite{cadfval}. For the Suslin part, see Proposition 2.3.3 in \cite{cadfval}.
\end{proof}
The notion of tightness for Young measures that we will use has been introduced by Valadier \cite{val90} (see also \cite{crauel}). Recall that a set-valued function $[0,T]\ni t\mapsto K_t\subset M$ is said to be \emph{measurable} if and only if
\[
\set{t\in [0,T]: K_t\cap U\neq\emptyset}\in\B([0,T])
\]
for every open set $U\subset M.$
\begin{defi}
We say that $\frakJ\subset\Y(0,T;M)$  is \emph{flexibly tight} if,
for each $\varepsilon>0,$ there exists a measurable set-valued mapping $[0,T]\ni t\mapsto K_t\subset M$ such that $K_t$ is compact for all $t\in [0,T]$ and
\[
\sup_{\lambda\in\frakJ}\int_0^T\!\int_M \mathbf{1}_{K_t^c}(u)\,\lambda(du,dt)<\varepsilon
\]
\end{defi}
In order to give a characterization of flexible tightness we need the notion of an inf-compact function,
\begin{defi}\label{infcompact}
A function $\eta:M\to[0,+\infty]$ is called \emph{inf-compact} iff the level sets
\[
\set{\eta\le R}=\set{u\in M:\eta(u)\le R}
\]
are compact for all $R\geq 0.$
\end{defi}
Observe that, since $M$ is Hausdorff, for every inf-compact function $\eta$ the level sets $\set{\eta\le R}$ are closed. Therefore, every inf-compact function is lower semi-continuous and hence Borel-measurable (see e.g. \cite{kallen}).
\begin{ex}\label{infcompactex}
Let $(V,\abs{\cdot}_V)$ be a reflexive Banach space \emph{compactly} embedded into another Banach space $(M,\abs{\cdot}_M),$ and let $a:\R^{+}\to\R^{+}$ be strictly increasing and continuous. Then the map $\eta:M\to[0,+\infty]$ defined by
\[
\eta(u):=
\left\{
  \begin{array}{ll}
    a(\abs{u}_V), & \hbox{if} \ u\in V\\
    +\infty, & \hbox{else.}
  \end{array}
\right.
\]
is inf-compact.
\begin{proof}[Proof of Example \ref{infcompactex}]
Since $a(\cdot)$ is increasing, we only need to show that the closed unit ball $D$ in $V$ is compact in $M.$ Let $(u_n)_{n}$ be a sequence in $D.$ Since the embedding $V\hookrightarrow M$ is compact, there exist a subsequence, which we again denote by $(u_n)_{n},$ and $u\in M$ such that $u_n\to u$ in $M$ as $n\to\infty.$ Hence, if $C$ is a constant such that $\abs{v}_M\le C\abs{v}_V, \ v\in V,$ and $\varepsilon>0$ is fixed we can find $\bar m\in\mathds{N}$ such that
\begin{equation}\label{ep1}
\abs{u_n-u}_M<\frac{\varepsilon}{1+C}, \ \ \forall n\geq \bar m.
\end{equation}
Now, since $V$ is reflexive, by the Banach-Alaoglu Theorem there exists a further subsequence, again denoted by $(u_n)_{n},$ and $\bar u\in V$ such that $u_n\to \bar u$ weakly in $V$ as $n\to\infty.$ In particular, this implies
\[
\bar u\in\overline{\{u_{\bar{m}},u_{\bar{m}+1},\ldots\}}^{w}
\subset \overline{{\rm co}\{u_{\bar{m}},u_{\bar{m}+1},\ldots\}}^{w}
\]
where ${\rm co}(\cdot)$ and $\overline{\,\cdot\,}^{w}$ denote the convex hull and weak-closure in $V$ respectively. By Mazur Theorem (see e.g. \cite[Theorem 2.5.16]{meg}), we have
\[
\overline{{\rm co}\{u_{\bar{m}},u_{\bar{m}+1},\ldots\}}^{w}
=\overline{{\rm co}\{u_{\bar{m}},u_{\bar{m}+1},\ldots\}}.
\]
Hence, there exist an integer $\bar{N}\geq 1$ and $\{\alpha_0,\ldots,\alpha_{\bar{N}}\}$ with $\alpha_i\geq 0,$ $\sum_{i=0}^{\bar{N}}\alpha_i=1,$ such that
\begin{equation}\label{ep2}
\Bigl|\sum_{i=0}^{\bar{N}}\alpha_i u_{\bar m+i}-\bar u\Bigr|_{V}<\frac{\varepsilon}{1+C}.
\end{equation}
By (\ref{ep1}) and (\ref{ep2}) it follows that
\begin{align*}
\abs{u-\bar u}_M&\le \Bigl|u-\sum_{i=1}^{\bar{N}}\alpha_i u_{\bar m+i}\Bigr|_{M}+\Bigl|\sum_{i=0}^{\bar{N}}\alpha_i u_{\bar m+i}-\bar u\Bigr|_{M}\\
&\le \Bigl|\sum_{i=0}^{\bar{N}}\alpha_i (u-u_{\bar m+i})\Bigr|_{M}+C\Bigl|\sum_{i=0}^{\bar{N}}\alpha_i u_{\bar m+i}-\bar u\Bigr|_{V}\\
&\le \sum_{i=0}^{\bar{N}}\alpha_i\abs{u-u_{\bar m+i}}_{M}+\frac{C\varepsilon}{1+C}\\
&<\varepsilon.
\end{align*}
Since $\varepsilon>0$ is arbitrary, we infer that $u=\bar u\in V.$ Therefore, $D$ is sequentially compact in $M,$ and the desired result follows.
\end{proof}
\end{ex}

\begin{theorem}[\textbf{Equivalence Theorem for flexible tightness}]\label{flexiblytight}
Let $\frakJ\subset\Y(0,T;M).$ Then the following conditions are equivalent
\begin{enumerate}[{\rm (a)}]
  \item $\frakJ$ is flexibly tight
  \item There exists a measurable function $\eta:[0,T]\times M\to[0,+\infty]$ such that $\eta(t,\cdot)$ is inf-compact for all $t\in [0,T]$ and
    \[
    \sup_{\lambda\in\frakJ}\,\int_0^T\!\!\int_M\eta(t,u)\,\lambda(du,dt)< +\infty.
    \]
\end{enumerate}
\end{theorem}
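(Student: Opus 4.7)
My plan is to prove the two implications separately, with (b)$\Rightarrow$(a) being a direct Markov-type estimate and (a)$\Rightarrow$(b) requiring a construction via an increasing sequence of compactly-valued multifunctions produced from the definition of flexible tightness.

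For (b)$\Rightarrow$(a), fix $\varepsilon>0$ and set $C:=\sup_{\lambda\in\frakJ}\int_0^T\!\int_M \eta(t,u)\,\lambda(du,dt)<+\infty$. Choose $R>C/\varepsilon$ and define
\[
K_t:=\set{u\in M:\eta(t,u)\le R},\qquad t\in[0,T].
\]
Each $K_t$ is compact by inf-compactness of $\eta(t,\cdot)$, and since $\eta$ is jointly measurable, the graph $\{(t,u):u\in K_t\}$ is Borel in $[0,T]\times M$, which together with compactness of the sections is enough (via a standard Castaing-type argument in a metrisable Suslin space) to yield measurability of $t\mapsto K_t$ in the sense of the paper. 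The pointwise inequality $R\,\mathbf{1}_{K_t^c}(u)\le \eta(t,u)$ integrates to
\[
\sup_{\lambda\in\frakJ}\int_0^T\!\!\int_M \mathbf{1}_{K_t^c}(u)\,\lambda(du,dt)\le \tfrac{C}{R}<\varepsilon,
\]
proving flexible tightness.

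For the harder direction (a)$\Rightarrow$(b), the idea is to superpose countably many tightening multifunctions so that the resulting level-set function is both inf-compact in $u$ and has uniformly finite integral. For each $n\geq 1$, use flexible tightness with $\varepsilon=2^{-n}$ to obtain a measurable multifunction $t\mapsto L_t^n\subset M$ with $L_t^n$ compact and $\sup_{\lambda\in\frakJ}\int_0^T\!\int_M \mathbf{1}_{(L_t^n)^c}(u)\,\lambda(du,dt)<2^{-n}$. Replacing $L_t^n$ by $K_t^n:=\bigcup_{k=1}^n L_t^k$ preserves compactness, measurability (unions of measurable multifunctions are measurable, since $\{t:(K_t^1\cup K_t^2)\cap U\neq\emptyset\}$ is a union of Borel sets) and the above tightness estimate, while making the sequence monotone. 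Now set
\[
\eta(t,u):=\inf\set{n\geq 1:u\in K_t^n},
\]
with the convention $\inf\emptyset=+\infty$. Because $(K_t^n)_n$ is increasing, $\set{u:\eta(t,u)\le R}=K_t^{\lfloor R\rfloor}$ is compact, so $\eta(t,\cdot)$ is inf-compact. Joint measurability of $\eta$ follows from $\{(t,u):\eta(t,u)\le n\}$ coinciding with the graph of $K_t^n$, which is Borel-measurable in $[0,T]\times M$ by the remarks above.

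For the integrability bound I will use the telescoping identity $\eta(t,u)=\sum_{n=0}^{\infty}\mathbf{1}_{\{\eta(t,u)>n\}}=\mathbf{1}_{[0,T]\times M}+\sum_{n=1}^{\infty}\mathbf{1}_{(K_t^n)^c}(u)$, since monotonicity gives $\{\eta>n\}=(K_t^n)^c$ for $n\geq 1$. Fubini and the $2^{-n}$ bounds yield
\[
\sup_{\lambda\in\frakJ}\int_0^T\!\!\int_M \eta(t,u)\,\lambda(du,dt)\le T+\sum_{n=1}^{\infty}2^{-n}=T+1<+\infty,
\]
completing the proof. The main obstacle I anticipate is the measurability bookkeeping: guaranteeing that the multifunctions $K_t^n$ remain jointly measurable after the monotonisation step and that their graphs are Borel in $[0,T]\times M$ so that $\eta$ is genuinely jointly measurable; this is where the metrisable Suslin hypothesis (Proposition \ref{stsuslinmet}) enters, via the standard graph-measurability results for compact-valued multifunctions into a Suslin space.
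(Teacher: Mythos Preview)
Your argument is correct: the Markov/level-set estimate for (b)$\Rightarrow$(a) and the layer-cake construction $\eta(t,u)=\inf\{n:u\in K_t^n\}$ for (a)$\Rightarrow$(b) are exactly the standard route, and the telescoping bound $T+1$ is clean. The only places that deserve genuine care are the measurability claims you flag yourself---that the level-set multifunction $t\mapsto\{\eta(t,\cdot)\le R\}$ is measurable in the sense of the paper, and that the graphs of the monotonised $K_t^n$ are Borel---but these are indeed covered by the Castaing--Valadier theory for compact-valued multifunctions into a metrisable Suslin space, so your appeal to it is legitimate.

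As for comparison with the paper: there is nothing to compare. The paper does not prove this theorem; it simply refers the reader to \cite[Definition~3.3]{balder00}. Your proposal therefore supplies a self-contained proof where the paper gives only a citation, and the argument you give is in fact the one underlying Balder's treatment.
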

\begin{proof}
See e.g. \cite[Definition 3.3]{balder00}
\end{proof}
\begin{theorem}[\textbf{Prohorov criterion for relative compactness}]\label{prohym}
Let $M$ be a metrisable Suslin space. Then every flexibly tight subset of $\Y(0,T;M)$ is sequentially relatively compact in the stable topology.
\end{theorem}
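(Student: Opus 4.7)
The plan is to reduce stable sequential compactness in $\Y(0,T;M)$ to Prohorov's classical theorem on a compact metric ambient space. Since $M$ is metrisable and separable (as a Suslin space), by Urysohn it embeds homeomorphically into the Hilbert cube $Q:=[0,1]^{\mathds{N}}$; fix such an embedding $i:M\hookrightarrow Q$. For each $\lambda\in\Y(0,T;M)$, let $\tilde\lambda:=(i\otimes\mathrm{id})_{*}\lambda$ be its pushforward on $Q\times[0,T]$: a finite Borel measure of total mass $T$, with $dt$ as its $[0,T]$-marginal, and concentrated on $i(M)\times[0,T]$. Given $(\lambda_{n})\subset\frakJ$, the sequence $(\tilde\lambda_{n})$ lies in the set of Borel measures on $Q\times[0,T]$ with fixed time-marginal $dt$, which is narrowly compact since $Q\times[0,T]$ is compact metric. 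So up to a subsequence $\tilde\lambda_{n}\to\mu$ narrowly, for some $\mu$ of total mass $T$ with marginal $dt$.

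The next step is to show $\mu$ is concentrated on $i(M)\times[0,T]$. For each $\delta>0$, take the compact-valued measurable multifunction $t\mapsto K_{t}^{\delta}$ supplied by flexible tightness. The Scorza-Dragoni theorem applied to $t\mapsto i(K_{t}^{\delta})\subset Q$ gives a closed set $T_{\delta}\subset[0,T]$ with $l([0,T]\setminus T_{\delta})<\delta$ on which this multifunction is upper semi-continuous, so its graph
\[
H_{\delta}:=\{(u,t)\in Q\times T_{\delta}:u\in i(K_{t}^{\delta})\}
\]
is a compact subset of $i(M)\times[0,T]$. Fubini and the tightness bound yield $\tilde\lambda_{n}((Q\times T_{\delta})\setminus H_{\delta})=\int_{T_{\delta}}\lambda_{n}(M\setminus K_{t}^{\delta},dt)<\delta$, hence $\tilde\lambda_{n}(H_{\delta})\geq T-2\delta$. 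By Portmanteau for closed sets, $\mu(H_{\delta})\geq\limsup_{n}\tilde\lambda_{n}(H_{\delta})\geq T-2\delta$, and letting $\delta\to 0$ gives $\mu(i(M)\times[0,T])=T$. Define $\lambda:=(i^{-1}\otimes\mathrm{id})_{*}\mu\in\Y(0,T;M)$.

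Finally, I would upgrade narrow convergence on $Q\times[0,T]$ to stable convergence in $\Y(0,T;M)$, that is, show
\[
\int_{D}\int_{M}f(u)\,\lambda_{n}(du,dt)\longrightarrow\int_{D}\int_{M}f(u)\,\lambda(du,dt)
\]
for every $D\in\B([0,T])$ and $f\in\mathcal{C}_{b}(M)$. The map $(u,t)\mapsto f(i^{-1}(u))$ is continuous on the compact set $H_{\delta}$, so by Tietze it extends to some $F_{\delta}\in\mathcal{C}(Q\times[0,T])$ with $\|F_{\delta}\|_{\infty}\leq\|f\|_{\infty}$. Approximating $\mathbf{1}_{D}$ in $L^{1}([0,T],dt)$ by continuous functions $h$ and using that the time-marginals of $\tilde\lambda_{n}$ and $\mu$ both equal $dt$, narrow convergence tested against the continuous function $F_{\delta}(u,t)h(t)$, together with uniform control of the error off $H_{\delta}$ (bounded by $2\|f\|_{\infty}\delta$ for every $n$ and for $\mu$), yields the claim after sending $\delta\to 0$. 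The main obstacle I anticipate lies precisely in this last step: passing from narrow convergence on the compactification $Q\times[0,T]$ to stable convergence on the non-closed subspace $i(M)\times[0,T]$ requires the Scorza-Dragoni refinement of flexible tightness (to obtain a single closed set on which the test function extends continuously) combined with uniform-in-$n$ control of the errors both outside the tight sets $K_{t}^{\delta}$ in space and outside continuity sets for $dt$ in time.
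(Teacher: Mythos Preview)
The paper does not actually prove this theorem; it simply cites \cite[Theorem 4.3.5]{cadfval}. Your proposal, by contrast, supplies a self-contained argument along the classical ``compactification'' route: embed $M$ into the Hilbert cube $Q$, use Prohorov on the compact space $Q\times[0,T]$ to extract a narrow limit, then use flexible tightness to show the limit stays inside $i(M)\times[0,T]$ and to upgrade narrow convergence to stable convergence. This is essentially the standard strategy behind the result you are citing, and your outline is correct.

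A couple of points worth tightening. First, what you invoke as ``Scorza--Dragoni'' for the multifunction $t\mapsto i(K_t^\delta)$ is really Lusin's theorem for measurable compact-valued multifunctions: since $Q$ is compact metric, such a multifunction is the same as a Borel map into the hyperspace $\mathcal{K}(Q)$ with the Hausdorff metric, and ordinary Lusin gives a closed $T_\delta$ of nearly full measure on which it is continuous (hence u.s.c.), making the graph $H_\delta$ closed in $Q\times T_\delta$ and therefore compact. Second, in the last step you should be explicit that the $L^1$ approximant $h$ of $\mathbf{1}_D$ can be taken with $0\le h\le 1$, so that the error term $\int h\bigl((f\circ i^{-1})-F_\delta\bigr)\,d\tilde\lambda_n$ is controlled by $2\|f\|_\infty\cdot\tilde\lambda_n\bigl((Q\times[0,T])\setminus H_\delta\bigr)<4\|f\|_\infty\delta$ uniformly in $n$; the same bound holds for $\mu$ by your Portmanteau estimate. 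With those clarifications the $3\varepsilon$ argument goes through and the stable convergence follows.
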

\begin{proof}
See \cite[Theorem 4.3.5]{cadfval}
\end{proof}

\begin{lem}\label{Lem:lscym}
Let $M$ be a metrisable Suslin space and let
\[
h:[0,T]\times M\to [-\infty,+\infty]
\]
be a measurable function such that $h(t,\cdot)$ is lower semi-continuous for every $t\in [0,T]$ and satisfies one of the two following conditions
\begin{enumerate}
  \item $\abs{h(t,u)}\le \gamma (t),$ a.e. $t\in [0,T],$ for some $\gamma\in L^1(0,T;\R),$
  \item $h\geq 0.$
\end{enumerate}
If $\lambda_n\to\lambda$ stably in $\Y(0,T;M),$ then
\[
\int_0^T\!\int_M h(t,u)\,\lambda(du,dt)\le \liminf_{n\to\infty}\int_0^T\!\int_M h(t,u)\,\lambda_n(du,dt).
\]
\end{lem}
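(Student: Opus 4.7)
The plan is to reduce everything, by truncation and a standard approximation, to the case of a bounded Carathéodory integrand, for which stable convergence forces convergence of integrals. The first move is a reduction to $h \geq 0$: in case (1) I would replace $h$ by $\tilde h(t,u) := h(t,u) + \gamma(t)$, which is still lsc in $u$ and nonnegative, and whose integrals differ from those of $h$ by the fixed constant $\int_0^T \gamma(t)\,dt$ (using the normalisation $\lambda(M\times D)=\lambda_n(M\times D)=l(D)$). Case (2) needs no reduction. I would then truncate: set $h^R(t,u) := h(t,u) \wedge R$ for $R>0$, which is bounded, jointly measurable, and still lsc in $u$ (minimum with a constant preserves lsc, as $\{h^R(t,\cdot) > a\} = \{h(t,\cdot) > a\}$ when $a<R$ and $\emptyset$ otherwise). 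Monotone convergence gives $\int\!\int h\,d\lambda = \sup_R \int\!\int h^R\,d\lambda$, and $h^R \leq h$ gives $\int\!\int h^R\,d\lambda_n \leq \int\!\int h\,d\lambda_n$ for every $n$.

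Next I would approximate the bounded lsc-in-$u$ integrand $h^R$ from below by Carathéodory integrands via a Moreau--Yosida regularisation. Since $M$ is metrisable Suslin it is separable; fix a compatible metric $d_M$ and set
\[
h^R_k(t,u) := \inf_{v \in M}\bigl\{h^R(t,v) + k\,d_M(u,v)\bigr\}, \qquad k \in \mathds{N}.
\]
Each $h^R_k$ is $k$-Lipschitz in $u$, bounded by $R$, and $h^R_k(t,u) \uparrow h^R(t,u)$ pointwise: if $h^R_k(t,u) \to L < h^R(t,u)$, then the lsc of $h^R(t,\cdot)$ gives $\delta,\varepsilon>0$ with $h^R(t,v) > L+\varepsilon$ for $d_M(u,v)<\delta$, while $h^R(t,v) + k\,d_M(u,v) \geq k\delta$ outside the ball, forcing $h^R_k(t,u) \geq \min(L+\varepsilon, k\delta)$, a contradiction. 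Joint measurability of $h^R_k$ follows from the measurable projection theorem on Suslin spaces; alternatively one can restrict the infimum to a countable dense set after replacing $h^R$ by its lsc envelope along that set.

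At this point each $h^R_k$ is a bounded Carathéodory integrand, so stable convergence $\lambda_n \to \lambda$ upgrades (through the extension of the defining $C_b(M)$-test functions to bounded Carathéodory integrands on metrisable Suslin $M$; see \cite{cadfval}) to
\[
\int_0^T\!\!\int_M h^R_k\,d\lambda = \lim_{n\to\infty} \int_0^T\!\!\int_M h^R_k\,d\lambda_n \leq \liminf_{n\to\infty} \int_0^T\!\!\int_M h\,d\lambda_n,
\]
the inequality coming from $h^R_k \leq h^R \leq h$. Monotone convergence in $k$ replaces $h^R_k$ by $h^R$ on the left, and the supremum over $R$ (together with the monotone identity from the first paragraph) yields the claimed inequality.

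The main obstacle is the passage from stable convergence in the sense of Definition~2.3 (only time-independent $C_b(M)$ test functions) to convergence of integrals against bounded Carathéodory integrands. This is exactly the Portmanteau-type content of the theory developed in \cite{cadfval}, and is where the metrisable Suslin assumption on $M$ does real work by providing separability together with the measurable-selection results underlying that extension. Once this is granted, the rest is routine truncation and Moreau--Yosida regularisation.
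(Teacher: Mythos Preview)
Your argument is correct. The paper, however, gives no self-contained proof at all: it simply cites Theorem~2.1.3(G) and Proposition~2.1.12(d) of \cite{cadfval} for cases (1) and (2) respectively. What you have written is essentially an unpacking of the machinery behind those cited results---the reduction to $h\ge 0$, truncation, Moreau--Yosida approximation from below by bounded Carath\'eodory integrands, and then the Portmanteau extension of stable convergence from $\mathcal{C}_b(M)$ test functions to bounded Carath\'eodory integrands. Since you yourself defer this last step to \cite{cadfval}, both proofs ultimately rest on the same reference at the point where the real work happens; you have just made the surrounding reductions explicit, which is useful expository content but not a genuinely different route.

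One small caveat: your alternative suggestion of computing the Moreau--Yosida infimum over a countable dense subset of $M$ does not work directly, because for an integrand that is only lower semi-continuous in $v$ (not continuous or upper semi-continuous) the infimum over a dense set can be strictly larger than the full infimum. The measurable-projection route you mention first is the correct one, and this is precisely where the Suslin hypothesis earns its keep.
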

\begin{proof}
If (1) holds, the result follows from Theorem 2.1.3--Part G in \cite{cadfval}. If (2) holds, the result follows from Proposition 2.1.12--Part (d) in \cite{cadfval}.
\end{proof}
The last two results will play an essential role in Section \ref{proofmain} in the proof of existence of stochastic optimal relaxed controls. They are, in fact, the main reasons why it suffices for our purposes to require that the control set $M$ is only metrisable and Suslin, in contrast with the existing literature on stochastic relaxed controls. Indeed, Theorem \ref{prohym} will be used to prove tightness of the laws of random Young measures (see Lemma \ref{tightrym} below) and Lemma \ref{Lem:lscym} will be used to prove the lower semi-continuity of the relaxed cost functionals as well as Theorem \ref{Lem:contym} below which will also be crucial to pass to the limit in the proof of our main result.

\begin{theorem}\label{Lem:contym}
Let $M$ be a metrisable Suslin space. If $\lambda_n\to\lambda$ stably in $\Y(0,T;M),$ then for every $f\in L^1(0,T;\mathcal{C}_b(M))$ we have
\[
\lim_{n\to\infty}\int_0^T\!\int_M f(t,u)\,\lambda_n(du,dt)=\int_0^T\!\int_M f(t,u)\,\lambda(dt,du).
\]
\end{theorem}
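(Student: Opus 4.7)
The plan is a standard three-step density argument: prove the identity first for ``rectangle'' integrands (where it reduces to the definition of the stable topology), extend by linearity to Bochner-simple integrands, and then use density of simple functions in $L^1(0,T;\mathcal{C}_b(M))$ together with a uniform-in-$\lambda$ bound to pass to the limit. The Suslin/metrisable structure of $M$ is not needed for this proof; only the definition of the stable topology and the normalisation $\lambda(M\times D)=l(D)$ enter.

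First I would observe that if $f(t,u)=\mathbf{1}_D(t)g(u)$ with $D\in\B([0,T])$ and $g\in\mathcal{C}_b(M)$, then by the very definition of the stable topology
\[
\int_0^T\!\!\int_M f(t,u)\,\lambda_n(du,dt)=\int_D\!\int_M g(u)\,\lambda_n(du,dt)\longrightarrow\int_D\!\int_M g(u)\,\lambda(du,dt),
\]
and by linearity the same holds whenever $f=\sum_{i=1}^N\mathbf{1}_{D_i}g_i$ is a $\mathcal{C}_b(M)$-valued Bochner simple function.

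Next I would use the following uniform estimate, valid for \emph{every} $\mu\in\Y(0,T;M)$: if $q_t(du)\,dt$ is the disintegration provided by Lemma \ref{Lem:disrym} (with $\Omega$ a singleton, so measurability is trivial), then for any $\varphi\in L^1(0,T;\mathcal{C}_b(M))$ one has
\[
\Bigl|\int_0^T\!\!\int_M\varphi(t,u)\,\mu(du,dt)\Bigr|\le\int_0^T\!\!\int_M|\varphi(t,u)|\,q_t(du)\,dt\le\int_0^T\!\|\varphi(t,\cdot)\|_{\mathcal{C}_b(M)}\,dt,
\]
since each $q_t$ is a probability measure on $M$. This bound is the whole point: it does not depend on $\mu$.

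Given $f\in L^1(0,T;\mathcal{C}_b(M))$ and $\varepsilon>0$, I would then choose a Bochner simple function $f_\varepsilon$ with $\|f-f_\varepsilon\|_{L^1(0,T;\mathcal{C}_b(M))}<\varepsilon$ (which exists because strong measurability forces $f$ to take values, up to a null set, in a separable subspace of $\mathcal{C}_b(M)$, where simple-function approximation is standard). Applying the uniform estimate to $\varphi=f-f_\varepsilon$ both for $\mu=\lambda_n$ and $\mu=\lambda$, and combining with the already-established convergence for $f_\varepsilon$,
\[
\limsup_{n\to\infty}\Bigl|\int_0^T\!\!\int_M f\,d\lambda_n-\int_0^T\!\!\int_M f\,d\lambda\Bigr|\le 2\varepsilon,
\]
and letting $\varepsilon\to 0$ finishes the proof. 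The only subtle point, and the one I would double-check carefully, is the approximation by simple functions in $L^1(0,T;\mathcal{C}_b(M))$: although $\mathcal{C}_b(M)$ is generally non-separable, Bochner-measurability of $f$ means this non-separability is harmless and the usual Pettis/Bochner construction applies.
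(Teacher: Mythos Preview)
Your proof is correct, but it takes a genuinely different route from the paper's. The paper's argument is a one-liner: since $f\in L^1(0,T;\mathcal{C}_b(M))$, the map $(t,u)\mapsto f(t,u)$ is continuous in $u$ (hence both l.s.c.\ and u.s.c.) and dominated by $\gamma(t):=\|f(t,\cdot)\|_{\mathcal{C}_b(M)}\in L^1(0,T)$, so Lemma~\ref{Lem:lscym} applied to $f$ and to $-f$ yields the two inequalities $\int f\,d\lambda\le\liminf_n\int f\,d\lambda_n$ and $\limsup_n\int f\,d\lambda_n\le\int f\,d\lambda$. This is short precisely because the semicontinuity lemma (itself quoted from Castaing--de Fitte--Valadier) does all the work, and that lemma is where the metrisable Suslin hypothesis is actually used.

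Your approach is more elementary and self-contained: you go back to the definition of the stable topology, establish convergence on rectangle integrands, and then exploit the uniform bound $\bigl|\int\varphi\,d\mu\bigr|\le\|\varphi\|_{L^1(0,T;\mathcal{C}_b(M))}$ to pass to the $L^1$-closure via Bochner simple functions. One small remark: you do not actually need to invoke the disintegration Lemma~\ref{Lem:disrym} (and hence the Radon property of $M$) to get that bound; since $|\varphi(t,u)|\le\|\varphi(t,\cdot)\|_{\mathcal{C}_b(M)}$ and the right-hand side depends only on $t$, the marginal condition $\mu(M\times D)=l(D)$ alone gives $\int|\varphi|\,d\mu\le\int_0^T\|\varphi(t,\cdot)\|_{\mathcal{C}_b(M)}\,dt$ directly. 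With that simplification your argument really is independent of the Suslin hypothesis, as you suspected. The trade-off is that the paper's proof is shorter in context (it reuses Lemma~\ref{Lem:lscym}, which is needed anyway for the lower-semicontinuity of the cost functional), while yours avoids importing the deeper semicontinuity machinery and makes the result stand on its own.
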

\begin{proof}
Use Lemma \ref{Lem:lscym} with $f$ and $-f.$
\end{proof}
We will need the following version of the so-called Fiber Product Lemma. For a measurable map $y:[0,T]\to M,$ we denote by $\underline{\delta}_{y(\cdot)}(\cdot)$ the \emph{degenerate Young measure} defined as $\underline{\delta}_{y(\cdot)}(du,dt):=\delta_{y(t)}(du)\,dt.$
\begin{lem}[{\bf Fiber Product Lemma}]\label{Lem:fibpro}
Let $\mathcal{S}$ and $M$ be separable metric spaces and let $y_n:[0,T]\to\mathcal{S}$ be a sequence of measurable mappings which converge pointwise to a mapping $y:[0,T]\to \mathcal{S}.$ Let $\lambda_n\to\lambda$ stably in $\Y(0,T;M)$ and consider the following sequence of Young measures on $\mathcal{S}\times M,$
\[
(\underline{\delta}_{y_n}\otimes\lambda_n)(dx,du,dt):=\delta_{y_n(t)}(dx)\,\lambda_n(du,dt), \ \ n\in\mathds{N},
\]
and
\[
(\underline{\delta}_{y}\otimes\lambda)(dx,du,dt):=\delta_{y(t)}(dx)\,\lambda(du,dt).
\]
Then $\underline{\delta}_{y_n}\otimes\lambda_n\to \underline{\delta}_{y}\otimes\lambda$ stably in $\Y(0,T;S\times M).$
\end{lem}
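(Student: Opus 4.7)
By definition of the stable topology on $\Y(0,T;\mathcal{S}\times M)$, I need to verify that for every $D\in\B([0,T])$ and every $F\in\mathcal{C}_b(\mathcal{S}\times M)$,
\[
\int_D\!\int_M F(y_n(t),u)\,\lambda_n(du,dt)\longrightarrow\int_D\!\int_M F(y(t),u)\,\lambda(du,dt).
\]
The plan is to split the difference as $A_n+B_n$, where
\[
A_n:=\int_D\!\int_M\bigl[F(y_n(t),u)-F(y(t),u)\bigr]\lambda_n(du,dt),\quad B_n:=\int_D\!\int_M F(y(t),u)\,(\lambda_n-\lambda)(du,dt),
\]
and show that each piece tends to zero separately.

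For $B_n$: the integrand $h(t,u):=\mathbf{1}_D(t)F(y(t),u)$ is a bounded Carathéodory function on $[0,T]\times M$, since $F(\cdot,u)$ is continuous and $y$ is measurable (so $t\mapsto F(y(t),u)$ is measurable for each $u$), while $u\mapsto F(y(t),u)$ is continuous for each $t$. The stable convergence $\lambda_n\to\lambda$ on the metrisable Suslin space $M$ extends to such bounded Carathéodory integrands by the same argument underlying Theorem \ref{Lem:contym}: approximate $y$ pointwise by a simple function into a countable dense subset $\{x_i\}\subset\mathcal{S}$, apply the stable convergence to each of the finitely many test functions $\mathbf{1}_{A_i\cap D}(t)F(x_i,u)$ separately, and control the uniform-in-$n$ approximation error by $\|F\|_\infty$ times the finite mass $T$. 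This yields $B_n\to 0$.

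For $A_n$: fix $\varepsilon>0$. Lusin's theorem applied to the measurable $y:[0,T]\to\mathcal{S}$ yields a compact $K_\varepsilon\subset[0,T]$ of measure $>T-\varepsilon$ on which $y$ is continuous, so that $y(K_\varepsilon)$ is compact in $\mathcal{S}$. Egorov's theorem applied to the pointwise convergence $y_n\to y$ gives a measurable $E_\varepsilon\subset[0,T]$ with $|E_\varepsilon^c|<\varepsilon$ on which the convergence is uniform; consequently, for $n\ge N_\varepsilon$, the set $\bigcup_{n\ge N_\varepsilon}y_n(K_\varepsilon\cap E_\varepsilon)\cup y(K_\varepsilon)$ is contained in a compact $\tilde K_\varepsilon\subset\mathcal{S}$. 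Flexible tightness of $\{\lambda_n\}$ (a consequence of its stable convergence and the metrisable Suslin structure of $M$, converse to Theorem \ref{prohym}) provides a measurable compact-valued $t\mapsto K_t^\varepsilon\subset M$ with $\sup_n\int_0^T\!\int_M\mathbf{1}_{(K_t^\varepsilon)^c}(u)\,\lambda_n(du,dt)<\varepsilon$. On the compact fiber $\tilde K_\varepsilon\times K_t^\varepsilon$, $F$ is uniformly continuous; combined with the uniform convergence of $y_n\to y$ on $K_\varepsilon\cap E_\varepsilon$, the contribution to $A_n$ coming from $\{(t,u):t\in K_\varepsilon\cap E_\varepsilon,\,u\in K_t^\varepsilon\}$ vanishes as $n\to\infty$, while the complementary contribution is bounded by $2\|F\|_\infty\cdot 3\varepsilon$ ($\varepsilon$-mass in $t$ from each of Lusin and Egorov and $\varepsilon$-mass in $u$ from flexible tightness). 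Sending $n\to\infty$ and then $\varepsilon\to 0$ gives $A_n\to 0$.

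\textbf{Main obstacle.} The delicate point in bounding $A_n$ is that the moduli of continuity of $F$ on the fibers $\tilde K_\varepsilon\times K_t^\varepsilon$ may vary with $t$, so passing to uniform-in-$(t,u)$ smallness requires either a stronger tightness providing a single enveloping compact $L_\varepsilon\supseteq\bigcup_t K_t^\varepsilon$ or a measurable-selection/Scorza--Dragoni-type argument; either is available in the Suslin metrisable setting, but both invoke the full force of the assumptions on $M$. The treatment of $B_n$ is in turn the main reason one cannot simply apply Theorem \ref{Lem:contym} to a $\mathcal{C}_b(M)$-valued integrand, since $t\mapsto F(y(t),\cdot)$ need not be Bochner measurable: the scalar Carathéodory route sidesteps this entirely.
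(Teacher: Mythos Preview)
Your approach is entirely different from the paper's. The paper gives a two-line proof by citation: pointwise convergence $y_n\to y$ yields $\underline{\delta}_{y_n}\to\underline{\delta}_y$ stably in $\Y(0,T;\mathcal S)$ by a result of Valadier, and then a general fiber-product theorem for Young measures (Castaing--de Fitte) upgrades the pair of stable convergences $(\underline{\delta}_{y_n},\lambda_n)\to(\underline{\delta}_y,\lambda)$ to stable convergence of the product. No decomposition $A_n+B_n$, no tightness, no Egorov/Lusin.

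Your direct argument is natural in spirit but has a genuine gap. The control of $A_n$ rests entirely on flexible tightness of $\{\lambda_n\}$, which you justify as ``a consequence of its stable convergence and the metrisable Suslin structure of $M$, converse to Theorem~\ref{prohym}.'' Two problems: first, the lemma only assumes $M$ is a \emph{separable metric} space, not Suslin, so you are strengthening the hypotheses; second, and more seriously, Theorem~\ref{prohym} is a one-way implication (flexibly tight $\Rightarrow$ relatively compact) and the paper nowhere states or proves its converse. A stably convergent sequence is certainly relatively compact, but extracting flexible tightness from that is precisely the nontrivial direction of Prohorov, and for merely separable metric $M$ (no completeness, no Suslin) it can fail. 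Without flexible tightness you cannot confine $u$ to compacts, and then---as you yourself diagnose in your example-free way---the map $x\mapsto F(x,\cdot)\in\mathcal C_b(M)$ need not be continuous in sup-norm, so the uniform-continuity step collapses. Your ``Main obstacle'' paragraph is honest about this but does not actually close the gap; saying that Scorza--Dragoni or a single enveloping compact ``is available in the Suslin metrisable setting'' both imports an extra hypothesis and defers to exactly the kind of external machinery the citation-based proof invokes directly.

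The $B_n$ piece is also thinner than it looks: your simple-function approximation of $y$ controls the error only through continuity of $F$ in its first argument, which is again not uniform in $u$ without compactness on the $M$-side---the same obstruction reappears. The cleanest fix is to observe that stable convergence is known to extend to bounded Carath\'eodory integrands (this is part of the Portmanteau package in the reference the paper already uses for Lemma~\ref{Lem:lscym}); once you are willing to cite that, you are most of the way to the fiber-product theorem the paper cites anyway.
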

\begin{proof}
Proposition 1 in \cite{val93} implies that $\underline{\delta}_{y_n}\to \underline{\delta}_{y}$ stably in $\Y(0,T;\mathcal{S}),$ and the result follows from Corollary 2.2.2 and Theorem 2.3.1 in \cite{casdefitte}.
\end{proof}
\begin{lem}\label{tightrym}
Assume $M$ is metrisable and Suslin. For each $n\in\mathds{N}$ let $\lambda_n$ be a random Young measure on $M$ defined on a probability space $(\Omega^n,\Fil^n,\Prob^n).$ Assume there exists a
measurable function $\eta:[0,T]\times M\to[0,+\infty]$ such that $\eta(t,\cdot)$ is inf-compact for all $t\in [0,T]$ and
\[
\Exp^{\Prob^n}\int_0^T\!\int_M\eta(t,u)\,\lambda_n(du,dt)\le R, \ \ \mbox{ for all } \ n\in\mathds{N}.
\]
for some $R>0.$ Then, the family of laws of $\{\lambda_n\}_{n\in\mathds{N}}$ is tight on $\Y(0,T;M).$
\end{lem}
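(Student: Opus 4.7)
The plan is to build, for each $\varepsilon>0$, a single compact subset $K_\varepsilon\subset\Y(0,T;M)$ that captures the mass of every $\lambda_n$ up to probability $\varepsilon$. Since by Proposition \ref{stsuslinmet} the space $\Y(0,T;M)$ endowed with the stable topology is metrisable, tightness of the family of laws is equivalent to producing such a family of compact sets. The natural candidate is the sublevel set
\[
K_\varepsilon:=\left\{\lambda\in\Y(0,T;M):\int_0^T\!\int_M\eta(t,u)\,\lambda(du,dt)\le R/\varepsilon\right\}.
\]

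To show that $K_\varepsilon$ is compact, I would combine three of the preceding results. By Theorem \ref{flexiblytight}, condition (b) applied with the uniform bound $R/\varepsilon$ implies that $K_\varepsilon$ is flexibly tight. Then Theorem \ref{prohym}, which uses that $M$ is metrisable Suslin, yields that $K_\varepsilon$ is sequentially relatively compact in the stable topology. Applying Lemma \ref{Lem:lscym}(2) to the nonnegative measurable function $\eta$ shows that the functional $\lambda\mapsto\int_0^T\!\int_M\eta(t,u)\,\lambda(du,dt)$ is lower semi-continuous on $\Y(0,T;M)$, so its sublevel set $K_\varepsilon$ is closed, hence sequentially compact. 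Metrisability of $\Y(0,T;M)$ then upgrades sequential compactness to compactness, so $K_\varepsilon$ is compact.

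The tightness estimate is then just Markov's inequality: by hypothesis,
\[
\Exp^{\Prob^n}\!\int_0^T\!\int_M\eta(t,u)\,\lambda_n(du,dt)\le R,
\]
so that
\[
\Prob^n(\lambda_n\notin K_\varepsilon)\le\Prob^n\!\left(\int_0^T\!\int_M\eta(t,u)\,\lambda_n(du,dt)>\frac{R}{\varepsilon}\right)<\varepsilon.
\]
Measurability of the event $\{\lambda_n\in K_\varepsilon\}$ is ensured by the remark following Proposition \ref{stsuslinmet}: random Young measures are precisely the maps $\Omega^n\to\Y(0,T;M)$ that are measurable with respect to the Borel $\sigma$-algebra generated by the stable topology, and $K_\varepsilon$ is a closed, hence Borel, subset of $\Y(0,T;M)$.

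The one delicate point is the verification that $K_\varepsilon$ is closed in the stable topology, which rests entirely on Lemma \ref{Lem:lscym}(2); once that closedness is in hand, everything else is a bookkeeping assembly of Theorem \ref{flexiblytight}, Theorem \ref{prohym}, Proposition \ref{stsuslinmet} and Markov's inequality.
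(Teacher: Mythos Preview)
Your proof is correct and follows essentially the same approach as the paper: the same sublevel set $K_\varepsilon$, the same appeal to Theorems \ref{flexiblytight} and \ref{prohym} for (relative) compactness, and the same Chebyshev/Markov estimate. The only cosmetic difference is that the paper does not bother proving $K_\varepsilon$ is closed; it simply takes the closure $\bar K_\varepsilon$ and uses $\Prob^n(\lambda_n\notin\bar K_\varepsilon)\le\Prob^n(\lambda_n\notin K_\varepsilon)\le\varepsilon$, whereas you invoke Lemma \ref{Lem:lscym}(2) to show $K_\varepsilon$ is already closed.
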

\begin{proof}
For each $\varepsilon>0$ define the set
\[
K_\varepsilon:=\left\{\lambda\in\Y:\int_0^T\!\int_M\eta(t,u)\,\lambda(du,dt)\le\frac{R}{\varepsilon}\right\}.
\]
By Theorems \ref{flexiblytight} and \ref{prohym}, $K_\varepsilon$ is relatively compact in the stable topology of $\Y(0,T;M),$ and by Chebyshev's inequality we have
\[
\Prob^n\left(\lambda_n\in\Y\setminus\bar{K_\varepsilon}\right)\le \Prob^n\left(\lambda_n\in\Y\setminus K_\varepsilon\right)
\le \frac{\varepsilon}{R}\, \Exp^{\Prob^n}\int_0^T\!\!\int_M\eta(t,u)\,\lambda_n(du,dt)\le\varepsilon
\]
and the tightness of the laws of $\{\lambda_n\}_{n\geq1}$ follows.
\end{proof}
%

\subsection{Stochastic convolutions in UMD type 2 Banach spaces}
This section builds on the results on the factorization method for stochastic convolutions in UMD type 2 Banach spaces from \cite{brz1} and \cite{brzgat}. First, we recall the definition of the factorization operator as the negative fractional power of a certain abstract parabolic operator as well as some of its regularizing and compactness properties. Then, we review some basic properties of stochastic convolutions in M-type 2 Banach spaces.

In the sequel, $(\bE,\abs{\cdot}_\bE)$ will denote a Banach space and $T\in(0,+\infty)$ will be fixed. We start off by introducing the following Sobolev-type spaces,
\[
W^{1,p}(0,T;\bE):=\Bigl\{y\in L^p(0,T;\bE):y^\prime=\frac{dy}{dt}\in L^p(0,T;\bE)\Bigr\}, \ \ \ p>1
\]
where $y'$ denotes the weak derivative, and
\[
W_0^{1,p}(0,T;\bE):=\{y\in W^{1,p}(0,T;\bE): y(0)=0\}.
\]
Observe that $y(0)$ is well defined for $y\in W^{1,p}(0,T;\bE)$ since by the Sobolev Embedding Theorem we have $W^{1,p}(0,T;\bE)\subset \mathcal{C}([0,T];\bE),$ see e.g. \cite[Lemma 3.1.1]{temam1}.

Let $A$ be a closed linear operator on $\bE$ and let $D(A),$ the domain of $A,$ be endowed with the graph norm. We define the abstract parabolic operator $\Lambda_T$ on $L^p(0,T;\bE)$ through the formula
\begin{equation}\label{AT}
\begin{split}
D(\Lambda_T) &:= W^{1,p}_0(0,T;\bE)\cap L^p(0,T;D(A)),\\
\Lambda_Ty&:= y'+A(y(\cdot)).
\end{split}
\end{equation}
Our aim is to define the factorization operator as the negative fractional powers of $\Lambda_T.$ This definition relies on the closedness of the operator $\Lambda_T,$ which will follow from the Dore-Venni Theorem, see \cite{DV}. This, however, requires further conditions on the Banach space $\bE$ and the operator $A.$
\begin{defi}
A Banach space $\bE$ is said to have the property of \emph{unconditional martingale differences} (and we say that $\bE$ is a \emph{UMD} space) iff for some $p\in (1,\infty)$ there exists a constant $c\geq0$ such that
\[
\Bigl|\!\Bigl|\sum_{k=0}^n\varepsilon_k(y_k-y_{k-1})\Bigr|\!\Bigr|_{L^p(\Omega,\Fil,\Prob;\bE)}
\le c\Bigl|\!\Bigl|\sum_{k=0}^n(y_k-y_{k-1})\Bigr|\!\Bigr|_{L^p(\Omega,\Fil,\Prob;\bE)}
\]
for all $n\in\mathds{N},$ $\varepsilon_k\in\{\pm 1\}$ and all $\bE-$valued discrete martingales $\{y_k\}_{k}$ with $y_{-1}=0.$
\end{defi}
%
\begin{rem}
A normed vector space $\bE$ is said to be $\zeta-$\emph{convex} iff there exists a symmetric, biconvex (i.e. convex in each component) function $\zeta:\bE^2\to\R$ such that $\zeta(0,0)>0$ and $\zeta(x,y)\le \abs{x+y}_\bE$ for any $x,y\in\bE$ with $\abs{x}_\bE=\abs{y}_\bE=1.$ Burkholder proved in \cite{burk1} that a Banach space is UMD iff it is $\zeta-$convex. Moreover, a necessary (see \cite{burk2}) and sufficient (see \cite{bour83}) condition for a Banach space $\bE$ to be UMD is that the Hilbert transform is bounded on $L^p(\R;\bE)$ for some $p\in(1,\infty).$
\end{rem}

\begin{ex}\label{exUMD}
Hilbert spaces and the Lebesgue spaces $L^{p}(\mathcal{O}),$ with $\mathcal{O}$ a bounded domain in $\R^d$ and $p\in (1,+\infty),$ are examples of UMD spaces, see e.g. \cite[Theorem 4.5.2]{amann}.
\end{ex}

\begin{defi}
Let $A$ be a linear operator on a Banach space $\bE.$ We say that $A$ is \emph{positive} if it is closed, densely defined, $(-\infty,0]\subset \rho(A)$ and there exists $C\geq 1$ such that
\[
\Vertt (\lambda I+A)^{-1}\Vertt_{\Lin(\bE)} \le  {C\over 1+\lambda}, \ \ \ \text{ for all }\lambda\geq 0.
\]
\end{defi}
\begin{rem}
It is well known that if $A$ is a positive operator on $\bE,$ then $A$ admits (not necessarily bounded) fractional powers $A^z$ of any order $z\in\mathds{C}$  (see e.g. \cite[Section 4.6]{amann}). Recall that, for $\abs{\Re z}\le 1,$ the fractional power $A^z$ is defined as the closure of the linear mapping
\begin{equation}\label{Az}
D(A)\ni x\mapsto \frac{\sin \pi z}{\pi z}\int_0^{+\infty} t^z(tI+A)^{-2}Ax\,dt\in\bE,
\end{equation}
see e.g. \cite[p. 153]{amann}.
\end{rem}
\begin{defi}
The class $\BIP(\theta,\bE)$ of operators with \emph{bounded imaginary powers} on $\bE$ with parameter $\theta\in[0,\pi)$ is defined as the class of positive operators $A$ on $\bE$ with the property that $A^{is}\in \Lin(\bE)$ for all $s\in\R$ and there exists a constant $K>0$ such that
\begin{equation}
\Vertt A^{is} \Vertt_{\Lin(\bE)} \le K e^{\theta |s|}, \; s \in \R.
\label{2.1}
\end{equation}
\end{defi}
We will also denote $\BIP^-(\theta,\bE):=\cup_{\sigma\in(0,\theta)}\BIP(\sigma,\bE).$ Our main assumption on the operator $A$ throughout this article is the following,
\begin{Assumption}\label{Assum1}
$A\in\BIP^-({\pi \over 2},\bE).$
\end{Assumption}
In \cite[Theorem 2]{pruesohr1} it was proved that if $A$ satisfies Assumption \ref{Assum1}, then the operator $-A$ generates a (uniformly bounded) analytic $C_0-$semigroup $(S_t)_{t\geq 0}$ on $\bE.$ If furthermore $\bE$ is a UMD space, by the Dore-Venni Theorem (see Theorems 2.1 and 3.2 in \cite{DV}) it follows that the parabolic operator $\Lambda_T$ is positive on $L^p(0,T;\bE)$ and, in particular, admits the negative fractional powers $\Lambda_T^{-\alpha}$ for $\alpha\in(0,1].$ We have in fact the following formula

\begin{prop}[\cite{brz1}, Theorem 3.1]\label{Prop:2.1}
Let $\bE$ be a UMD Banach space and let Assumption \ref{Assum1} be satisfied. Then, for any $\alpha\in(0,1]$, $\Lambda_T^{-\alpha}$ is a bounded linear operator on $L^p(0,T;\bE)$, and  for $\alpha\in(0,1]$ we have
\begin{equation}
\left(\Lambda_T^{-\alpha}f\right)(t)=
{1 \over \Gamma (\alpha)}  \int_0^t (t-r)^{\alpha -1} S_{t-r}f(r) \,
dr, \;\; t \in (0,T),\; \;  f \in L^p(0,T;\bE).
\label{2.13a}
\end{equation}
\end{prop}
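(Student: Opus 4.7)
The plan is to reduce the statement to two ingredients: an explicit formula for the resolvent of $\Lambda_T$, and the Balakrishnan-type integral representation
\[
A^{-\alpha}=\frac{\sin\pi\alpha}{\pi}\int_0^\infty \lambda^{-\alpha}(\lambda I+A)^{-1}\,d\lambda,\qquad \alpha\in(0,1),
\]
which is available for any positive operator $A$ on a Banach space. As noted in the text, the Dore--Venni theorem guarantees that $\Lambda_T$ is positive on $L^p(0,T;\bE)$ under the UMD assumption on $\bE$ and Assumption \ref{Assum1} on $A$, so the representation is applicable to $\Lambda_T$.

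First I would treat the case $\alpha=1$ separately by direct verification. Given $f\in L^p(0,T;\bE)$, the element
\[
y(t):=\int_0^t S_{t-r}f(r)\,dr,\qquad t\in[0,T],
\]
lies in $W_0^{1,p}(0,T;\bE)\cap L^p(0,T;D(A))$ (using analyticity of $(S_t)_{t\ge 0}$ plus the standard maximal regularity associated with operators in $\BIP^-(\pi/2,\bE)$ on UMD spaces) and solves $y'+Ay=f$, $y(0)=0$. This identifies $\Lambda_T^{-1}$ as the convolution with $S_\cdot$ and, in particular, shows that for every $\lambda\ge 0$ the resolvent is
\[
\bigl((\lambda I+\Lambda_T)^{-1}g\bigr)(t)=\int_0^t e^{-\lambda(t-r)}S_{t-r}g(r)\,dr,\qquad g\in L^p(0,T;\bE),
\]
since $(\lambda I+\Lambda_T)$ is the parabolic operator associated with the rescaled generator $A+\lambda I$, whose semigroup is $e^{-\lambda t}S_t$.

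Next, for $\alpha\in(0,1)$ I would substitute this resolvent formula into the Balakrishnan representation. Fubini's theorem (whose applicability is justified by dominated convergence together with the uniform bound $\sup_{t\in[0,T]}\|S_t\|_{\Lin(\bE)}<\infty$, itself a consequence of the analyticity of the semigroup) yields
\[
(\Lambda_T^{-\alpha}f)(t)=\frac{\sin\pi\alpha}{\pi}\int_0^t S_{t-r}f(r)\left(\int_0^\infty \lambda^{-\alpha}e^{-\lambda(t-r)}\,d\lambda\right)dr.
\]
The inner integral evaluates to $\Gamma(1-\alpha)(t-r)^{\alpha-1}$, and the reflection identity $\Gamma(\alpha)\Gamma(1-\alpha)\sin\pi\alpha=\pi$ collapses the prefactor to $1/\Gamma(\alpha)$, giving \eqref{2.13a}. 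The case $\alpha=1$ is a limit of (or the direct verification already performed).

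Boundedness of $\Lambda_T^{-\alpha}$ on $L^p(0,T;\bE)$ then follows from \eqref{2.13a} by Young's convolution inequality: the operator is the composition of the scalar convolution with $k_\alpha(s)=\frac{1}{\Gamma(\alpha)}s^{\alpha-1}\mathbf 1_{[0,T]}(s)\in L^1(0,T)$ and the uniformly bounded family $(S_t)_{t\in[0,T]}$, hence
\[
\|\Lambda_T^{-\alpha}f\|_{L^p(0,T;\bE)}\le \frac{T^\alpha}{\alpha\,\Gamma(\alpha)}\sup_{t\in[0,T]}\|S_t\|_{\Lin(\bE)}\,\|f\|_{L^p(0,T;\bE)}.
\]
The main technical obstacle is the justification of the Fubini step and the identification of the operator defined by \eqref{2.13a} with the abstract fractional power $\Lambda_T^{-\alpha}$; this is handled by invoking the fact that the Balakrishnan integral coincides with $\Lambda_T^{-\alpha}$ on the dense subspace where the definition \eqref{Az} applies (with $z=-\alpha$, rewritten via the standard change of variables), and then extending by density using the already-established $L^p$-boundedness.
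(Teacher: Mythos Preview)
The paper does not supply a proof of this proposition; it is quoted verbatim from \cite{brz1}, Theorem~3.1, and no argument is reproduced here. So there is nothing in the present paper to compare your proposal against.

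That said, your sketch is correct and is essentially the standard route to this result (and is the argument one finds in the cited reference). The key steps---identifying $(\lambda I+\Lambda_T)^{-1}$ as convolution with $e^{-\lambda t}S_t$, inserting this into the Balakrishnan integral for $\Lambda_T^{-\alpha}$, evaluating the scalar Laplace integral $\int_0^\infty \lambda^{-\alpha}e^{-\lambda s}\,d\lambda=\Gamma(1-\alpha)s^{\alpha-1}$, and invoking the reflection formula---are all sound. The only cosmetic point is that the Fubini justification should be phrased in terms of absolute integrability (Tonelli) rather than ``dominated convergence''; the required bound is simply $\int_0^\infty\!\int_0^t \lambda^{-\alpha}e^{-\lambda(t-r)}\|S_{t-r}f(r)\|_\bE\,dr\,d\lambda<\infty$ for a.e.\ $t$, which follows from the same Gamma-integral computation combined with $\sup_{s\in[0,T]}\|S_s\|_{\Lin(\bE)}<\infty$ and $f\in L^p(0,T;\bE)$.
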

The fractional powers $\Lambda_T^{-\alpha}$ also satisfy the following compactness property which will be crucial to infer tightness of a certain family of laws of processes in the proof of our main Theorem,
\begin{theorem}[\cite{brzgat}, Theorem 2.6]\label{Th:compact}
Under the same assumptions of Proposition \ref{Prop:2.1}, suppose further that $A^{-1}$ is a compact operator (i.e. the embedding  $D(A)\imbed\bE$ is compact). Then, for any $ \alpha \in (0,1]$, the  operator $\Lambda_T^{-\alpha}$ is compact on $L^p(0,T;\bE).$
\end{theorem}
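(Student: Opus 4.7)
The plan is to deduce the compactness of $\Lambda_T^{-\alpha}$ on $L^p(0,T;\bE)$ from Simon's version of the Aubin--Lions compactness lemma applied to the image of the closed unit ball $B\subset L^p(0,T;\bE)$. The factorization operator provides, on one hand, a gain of spatial regularity into a fractional-power domain $D(A^\beta)$ and, on the other hand, a uniform equicontinuity under time translations in $L^p(0,T;\bE)$. The compactness hypothesis on $A^{-1}$ yields the compact embedding $D(A^\beta)\hookrightarrow\bE$ needed to close the argument.

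For the spatial gain, fix $\beta\in(0,\alpha)$. Since $-A$ generates a bounded analytic semigroup $(S_t)_{t\geq 0}$ on $\bE$ by Assumption~\ref{Assum1}, the classical moment estimate $\|A^\beta S_t\|_{\Lin(\bE)}\leq C_\beta t^{-\beta}$ holds for every $t>0$. Applying $A^\beta$ inside formula~(\ref{2.13a}) yields
\[
\|A^\beta(\Lambda_T^{-\alpha}f)(t)\|_\bE \leq \frac{C_\beta}{\Gamma(\alpha)}\int_0^t (t-r)^{\alpha-\beta-1}\,\|f(r)\|_\bE\,dr,
\]
and Young's convolution inequality, valid because $s^{\alpha-\beta-1}\in L^1(0,T)$ when $\alpha>\beta$, shows that $\Lambda_T^{-\alpha}$ maps $L^p(0,T;\bE)$ boundedly into $L^p(0,T;D(A^\beta))$. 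Compactness of the embedding $D(A^\beta)\hookrightarrow\bE$ follows from compactness of $A^{-1}$ via the Balakrishnan-type representation
\[
A^{-\beta}=\frac{\sin\pi\beta}{\pi}\int_0^\infty s^{-\beta}(sI+A)^{-1}\,ds,
\]
in which each resolvent $(sI+A)^{-1}=A^{-1}-sA^{-1}(sI+A)^{-1}$ is compact and the integral converges in the operator norm (the bound $\|s^{-\beta}(sI+A)^{-1}\|_{\Lin(\bE)}\leq Cs^{-\beta}/(1+s)$ is integrable for $\beta\in(0,1)$; the case $\beta\geq 1$ reduces to this via $A^{-\beta}=A^{-(\beta-1)}A^{-1}$).

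For temporal equicontinuity, a direct manipulation gives, for $h>0$ small and $t\in[0,T-h]$,
\begin{align*}
(\Lambda_T^{-\alpha}f)(t+h)-(\Lambda_T^{-\alpha}f)(t)
&=\frac{1}{\Gamma(\alpha)}\int_0^h (t+h-r)^{\alpha-1}S_{t+h-r}f(r)\,dr\\
&\quad+\frac{1}{\Gamma(\alpha)}\int_0^t\bigl[(s+h)^{\alpha-1}S_{s+h}-s^{\alpha-1}S_s\bigr]f(t-s)\,ds.
\end{align*}
The first integral has $L^p(0,T-h;\bE)$-norm bounded by $C_\alpha h^\alpha\|f\|_{L^p}$ by a Minkowski/Young estimate. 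For the second, split the $s$-integration at a small threshold $\delta>0$: on $(0,\delta)$, pointwise bounds combined with integrability of $s^{\alpha-1}$ near $0$ give a contribution of order $\delta^\alpha$ uniformly in $h$; on $(\delta,T)$, analyticity of $(S_t)$ yields uniform operator-norm continuity $\|S_{s+h}-S_s\|_{\Lin(\bE)}\to 0$ for $s\in[\delta,T]$ as $h\to 0$, producing a contribution $\omega_\delta(h)\to 0$. Choosing first $\delta$ small and then $h$ small gives an upper bound $\omega(h)\|f\|_{L^p}$ with $\omega(h)\to 0$.

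Combining these ingredients, $\Lambda_T^{-\alpha}(B)$ is bounded in $L^p(0,T;D(A^\beta))$ and equicontinuous in the $L^p(0,T;\bE)$-translation norm, so Simon's compactness theorem, invoked with the compact embedding $D(A^\beta)\hookrightarrow\bE$, yields relative compactness of $\Lambda_T^{-\alpha}(B)$ in $L^p(0,T;\bE)$. The main technical obstacle will be the quantitative equicontinuity estimate on the kernel-difference integral, where the singularity of $s^{\alpha-1}$ at $s=0$ has to be balanced against the uniform operator-norm continuity of $S_s$ away from $s=0$; this is routine once the analyticity estimates for $(S_t)$ are in place.
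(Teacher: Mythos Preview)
The paper does not contain its own proof of this statement; it is quoted verbatim from \cite{brzgat} (Theorem~2.6 there), so there is nothing in the present paper to compare against. That said, your approach via Simon's Aubin--Lions lemma is sound and is essentially one of the standard routes to this result: the spatial gain into $L^p(0,T;D(A^\beta))$ via the analytic-semigroup moment estimate and Young's inequality is correct, and the compactness of $A^{-\beta}$ from that of $A^{-1}$ through the Balakrishnan representation and the resolvent identity is fine.

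There is one algebraic slip worth fixing. In your time-translation decomposition the first term should read
\[
\frac{1}{\Gamma(\alpha)}\int_t^{t+h}(t+h-r)^{\alpha-1}S_{t+h-r}f(r)\,dr
\qquad\Bigl(=\frac{1}{\Gamma(\alpha)}\int_0^h u^{\alpha-1}S_u f(t+h-u)\,du\Bigr),
\]
not $\int_0^h(t+h-r)^{\alpha-1}S_{t+h-r}f(r)\,dr$. As written, the argument of $f$ sits in $[0,h]$ while the kernel exponent $t+h-r$ is bounded away from zero, so no $h^\alpha$ factor appears and the claimed bound fails. With the correct limits the kernel is $u^{\alpha-1}\mathbf{1}_{[0,h]}$, whose $L^1$-norm is $h^\alpha/\alpha$, and the Minkowski/Young estimate gives the asserted $C_\alpha h^\alpha\|f\|_{L^p}$ bound. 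The rest of the equicontinuity argument (splitting at a threshold $\delta$ and using norm-continuity of the analytic semigroup on $[\delta,T]$) is correct once this is repaired.
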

The following smoothing property of $\Lambda_T^{-\alpha}$ is a particular case of a more general regularizing result (see Lemma 3.3 in \cite{brz1}).
\begin{lem}\label{L:reg}
Under the same assumptions of Proposition \ref{Prop:2.1}, let $\alpha$ and  $\delta$ be positive numbers satisfying
\begin{equation}
\delta+\frac{1}{p}<\alpha
\label{cond:1}
\end{equation}
Then $\Lambda_{T}^{-\alpha}f\in\mathcal{C}([0,T];D(A^\delta))$ for all $f	\in	 L^p(0,T;\bE)$ and $\Lambda_{T}^{-\alpha}$ is a bounded operator from $L^p(0,T;\bE)$ into $\mathcal{C}([0,T];D(A^\delta)).$
\end{lem}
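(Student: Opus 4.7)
The plan is to work directly from the explicit integral representation \rf{2.13a} of Proposition \ref{Prop:2.1}. Since Assumption \ref{Assum1} implies that $-A$ generates a uniformly bounded analytic $C_0$-semigroup $(S_t)_{t\geq 0}$ on $\bE$, we have the classical smoothing estimate $\Vertt A^\delta S_t \Vertt_{\Lin(\bE)} \le M_\delta\, t^{-\delta}$ for $t>0$. Combining this with H\"older's inequality in $r$ with conjugate exponent $p'$, I would first establish the pointwise bound
\[
\int_0^t (t-r)^{\alpha-1}\, \Vertt A^\delta S_{t-r}\Vertt_{\Lin(\bE)}\, |f(r)|_\bE\, dr \le M_\delta\!\left(\int_0^t (t-r)^{(\alpha-1-\delta)p'}\, dr\right)^{\!1/p'}\!\!\|f\|_{L^p(0,T;\bE)}.
\]
The scalar integral on the right is finite precisely because $(\alpha-1-\delta)p'>-1$, which, using $1/p'=1-1/p$, is exactly the hypothesis \rf{cond:1} rewritten; the resulting upper bound is of order $t^{\alpha-\delta-1/p}\|f\|_{L^p(0,T;\bE)}$.

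Absolute convergence of this $\bE$-valued integral allows me to invoke the closedness of $A^\delta$ to push it inside the Bochner integral in \rf{2.13a}, obtaining
\[
A^\delta (\Lambda_T^{-\alpha} f)(t) = \frac{1}{\Gamma(\alpha)}\int_0^t (t-r)^{\alpha-1}\, A^\delta S_{t-r} f(r)\, dr, \quad t \in (0,T],
\]
together with the uniform bound $\sup_{t\in[0,T]}|A^\delta(\Lambda_T^{-\alpha}f)(t)|_\bE \le C_{\alpha,\delta,p}\, T^{\alpha-\delta-1/p}\|f\|_{L^p(0,T;\bE)}$. In particular $(\Lambda_T^{-\alpha}f)(t)\in D(A^\delta)$ for every $t\in[0,T]$, and $\Lambda_T^{-\alpha}$ is bounded from $L^p(0,T;\bE)$ into $L^\infty(0,T;D(A^\delta))$.

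It remains to upgrade $L^\infty$ to $\mathcal{C}([0,T];D(A^\delta))$. Continuity at $t=0$ is immediate from the pointwise bound, since the exponent $\alpha-\delta-1/p$ is strictly positive. For continuity at an interior point $t_0\in(0,T]$, I would proceed by a density argument: for $f$ in the dense subspace $\mathcal{C}([0,T];D(A^\delta))$ of $L^p(0,T;\bE)$, the change of variable $s=t-r$ and dominated convergence with integrable majorant $C\, s^{\alpha-1}\|f\|_{\mathcal{C}([0,T];D(A^\delta))}$ show that $t\mapsto A^\delta(\Lambda_T^{-\alpha}f)(t)$ is continuous into $\bE$; the uniform estimate of the previous paragraph then transfers continuity to all $f\in L^p(0,T;\bE)$ by approximation. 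The only genuinely delicate point is the interchange of $A^\delta$ with the Bochner integral, and the precise role of the hypothesis $\delta + 1/p < \alpha$ is to make the scalar kernel $(t-r)^{\alpha-1-\delta}$ compatible with an $L^p$ datum in $r$ via H\"older, which simultaneously delivers the a priori estimate and the vanishing of $|A^\delta(\Lambda_T^{-\alpha}f)(t)|_\bE$ as $t\to 0^+$.
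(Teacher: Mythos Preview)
Your argument is correct. The paper itself does not supply a proof of this lemma; it simply records that the statement is a particular case of Lemma~3.3 in \cite{brz1}. Your direct approach---exploiting the explicit representation \rf{2.13a}, the analytic-semigroup smoothing bound $\Vertt A^\delta S_t\Vertt_{\Lin(\bE)}\le M_\delta t^{-\delta}$, H\"older in the time variable to see that \rf{cond:1} is exactly the integrability threshold for the kernel $(t-r)^{\alpha-1-\delta}$ against an $L^p$ datum, closedness of $A^\delta$ to pass it under the Bochner integral, and then a density/dominated-convergence argument for continuity---is the standard route and is essentially what underlies the cited result.
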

Using Theorem \ref{Th:compact} and Lemma \ref{L:reg} one can prove the following,
\begin{cor}[\cite{brzgat}, Corollary 2.8]\label{C:comp}
Suppose the assumptions of Theorem \ref{Th:compact} and Lemma \ref{L:reg} are satisfied. Then $\Lambda_T^{-\alpha}$  is a compact map from $L^p(0,T;\bE)$  into $\mathcal{C}([0,T];D(A^\delta))$.
\end{cor}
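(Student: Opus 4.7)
The plan is to split the fractional power as $\Lambda_T^{-\alpha}=\Lambda_T^{-\alpha_1}\circ\Lambda_T^{-\alpha_2}$, so that one factor supplies compactness on $L^p(0,T;\bE)$ and the other promotes the result from $L^p$-in-time into the continuous-in-time target space $\mathcal{C}([0,T];D(A^\delta))$. Since by hypothesis $\delta+\tfrac{1}{p}<\alpha$, I can pick any $\alpha_1\in(\delta+\tfrac{1}{p},\alpha)$ and set $\alpha_2:=\alpha-\alpha_1>0$. With this choice the hypothesis of Lemma~\ref{L:reg} is met by $\alpha_1$, while the only thing required of $\alpha_2$ in Theorem~\ref{Th:compact} is positivity, which is built in.

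Since $\Lambda_T$ is positive on $L^p(0,T;\bE)$ (this is the content of the Dore-Venni step underlying Proposition~\ref{Prop:2.1}), its negative fractional powers satisfy the semigroup identity
\[
\Lambda_T^{-\alpha}=\Lambda_T^{-\alpha_1}\,\Lambda_T^{-\alpha_2}\qquad\text{on }L^p(0,T;\bE).
\]
By Theorem~\ref{Th:compact}, $\Lambda_T^{-\alpha_2}$ is a compact operator from $L^p(0,T;\bE)$ into itself. By Lemma~\ref{L:reg}, $\Lambda_T^{-\alpha_1}$ is a bounded linear operator from $L^p(0,T;\bE)$ into $\mathcal{C}([0,T];D(A^\delta))$. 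The composition of a compact operator with a bounded linear operator is compact (in either order), hence $\Lambda_T^{-\alpha}=\Lambda_T^{-\alpha_1}\circ\Lambda_T^{-\alpha_2}$ is compact as a map $L^p(0,T;\bE)\to\mathcal{C}([0,T];D(A^\delta))$, which is exactly the claim.

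There is no serious obstacle: the one point that is not tautological is the semigroup identity for the negative fractional powers of $\Lambda_T$, but this is a standard property of positive operators (see e.g.\ \cite[Section~4.6]{amann}). The substantive analytical content of the corollary already lives in Theorem~\ref{Th:compact} and Lemma~\ref{L:reg}; the corollary itself is essentially an exercise in composing the two.
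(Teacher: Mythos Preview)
Your proposal is correct and follows exactly the approach the paper indicates: the paper does not spell out a proof but merely states that the corollary follows from Theorem~\ref{Th:compact} and Lemma~\ref{L:reg}, which is precisely your factorization $\Lambda_T^{-\alpha}=\Lambda_T^{-\alpha_1}\Lambda_T^{-\alpha_2}$ with one factor supplying compactness and the other the $\mathcal{C}([0,T];D(A^\delta))$-regularity. The only minor point worth noting is that $\alpha_2=\alpha-\alpha_1<\alpha\le 1$, so $\alpha_2\in(0,1]$ as required by Theorem~\ref{Th:compact}.
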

\begin{rem}\label{nu0}
Since $T>0$ is finite, it can be proved that the above results are still valid if $A+\nu I\in\BIP^-(\frac{\pi}{2},\bE)$ for some $\nu\geq 0$ (see e.g. \cite{brzgat} or \cite[Theorem 4.10.8]{amann}).
\end{rem}

\begin{ex}\label{ex0}
Let $\mathcal{O}$ be a bounded domain in $\R^d$ with sufficiently smooth boundary and let $\mathcal{A}$ denote the second-order elliptic differential operator defined as
\[
(\mathcal{A}x)(\xi):=-\sum_{i,j=1}^d a_{ij}(\xi)\frac{\partial^2 x}{\partial \xi_i\partial \xi_j}+\sum_{i=1}^d b_{i}(\xi)\frac{\partial x}{\partial \xi_i} + c(\xi)x(\xi), \ \ \ \xi\in\mathcal{O},
\]
with $a_{ij}=a_{ji},$
\[
\sum_{i,j=1}^d a_{ij}(\xi)\lambda_i\lambda_j\geq C\abs{\lambda}^2, \ \ \lambda\in\R^d.
\]
and $c,b_i,a_{ij}\in\mathcal{C}^\infty(\bar{\mathcal{O}}).$ Let $q\geq 2$ and let $A_q$ denote the realization of $\mathcal{A}$ in $L^q(\mathcal{O}),$ that is,
\begin{equation}\label{Aqdef}
\begin{split}
D(A_q)&:=H^{2,q}(\mathcal{O})\cap H_0^{1,q}(\mathcal{O}),\\
A_qx&:=\mathcal{A}y.
\end{split}
\end{equation}
Then $A_q+\nu I\in\BIP^-({\pi \over 2},L^q(\mathcal{O}))$ for some $\nu\geq 0 $ (see e.g. \cite{seeley}).
Other examples of differential operators satisfying such condition include realizations of higher order elliptic partial differential operators \cite{seeley} and the Stokes operator \cite{gigasohr}.
\end{ex}
We now briefly outline the construction of the stochastic integral in M-type 2 Banach spaces with respect to a cylindrical Wiener process. For the details we refer to \cite{dett} or \cite{brz1}. See also \cite{brz2} and the references therein.

For the rest of this section we fix a separable Hilbert space $\left(\bH,[\cdot,\cdot]_\bH\right)$ and a probability space $(\Omega ,\Fil,\Prob)$ endowed with a filtration $\mathds{F}=\{\Fil_t\}_{t\ge 0}.$
\begin{defi}
A family $W(\cdot)=\{W(t)\}_{t\geq 0}$ of bounded linear operators from $\bH$ into $L^2(\Omega;\R)$ is called a $\bH-$\emph{cylindrical Wiener process} (with respect to the filtration $\mathds{F})$ iff
\begin{enumerate}[(i)]
\item $\Exp\, W(t)y_1 W(t)y_2 = t[y_1 ,y_2]_\bH$, for all $t\ge 0$ and $y_1 ,y_2 \in\bH$,

\smallskip
\item for each $y\in\bH$, the process $\{W(t)y\}_{t\geq 0}$ is a standard one-dimensional Wiener process with respect to $\mathds{F}.$
\end{enumerate}
\end{defi}
For $h\in\bH$ and $x\in\bE,$ $h\otimes x$ will denote the linear operator
\[
(h\otimes x)y:=[h,y]_\bH x, \ \ \ y\in\bH.
\]
For $p\geq 1$ and a Banach space $(V,\abs{\cdot}_V),$ let $\M^p(0,T;V)$ denote the space of (classes of equivalences of) $\mathds{F}-$progressively measurable processes $\Phi:[0,T]\times\Omega\to V$ such that
\[
\norm{\Phi}^p_{\M^p(0,T;V)}:=\Exp\int_0^T\abs{\Phi(t)}^p_V\,dt<\infty.
\]
Notice that $\M^p(0,T;V)$ is a Banach space with the norm $\norm{\cdot}_{\M^p(0,T;V)}.$

\begin{defi}
A process $\Phi(\cdot)$ with values in $\Lin(\bH,\bE)$ is said to be \emph{elementary} (with respect to the filtration $ \{\Fil_t\}_{t\ge 0})$ if there exists a partition $0=t_0<t_1\cdots<t_N=T$ of $[0,T]$ such that
\[
\Phi(s)=\sum_{n=0}^{N-1}\sum_{k=1}^K \mathbf{1}_{[t_n,t_{n+1})}(s)e_k\otimes\xi_{kn}, \ \ \ s\in [0,T].
\]
where $(e_k)_k$ is an orthonormal basis of $\bH$ and $\xi_{kn}$ is a $\bE-$valued $\Fil_{t_n}-$measurable random variable, for $n=0,1,\dots,N-1.$
For such processes we define the \emph{stochastic integral} as
\[
I_T(\Phi):=\int_0^T\Phi(s)\,dW(s):=\sum_{n=0}^{N-1}\sum_{k=1}^K\left(W(t_{n+1})e_k-W(t_{n})e_k\right)\xi_{kn}.
\]
\end{defi}
\begin{defi}
Let $(\gamma_k)_k$ be a sequence of real-valued standard Gaussian random variables. A bounded linear operator $R:\bH\to\bE$ is said to be $\gamma-$\emph{radonifying} iff there exists an orthonormal basis $(e_k)_{k\geq 1}$ of $\bH$ such that the sum $\sum_{k\geq 1}\gamma_k Re_k$ converges in $L^2(\Omega;\bE).$
\end{defi}
We denote by $\gamma(\bH,\bE)$ the class of $\gamma-$radonifying operators from $\bH$ into $\bE,$ which is a Banach space equipped with the norm
\[
\norm{R}^2_{\gamma(\bH,\bE)}:=\Exp\Bigl|\sum_{k\geq 1}\gamma_k Re_k\Bigr|^2_\bE , \ \ \ \ R\in \gamma(\bH,\bE).
\]
The above definition is independent of the choice of the orthonormal basis $(e_k)_{k\geq 1}$ of $\bH.$ Moreover, $\gamma(\bH,\bE)$ is continuously embedded into $\Lin(\bH,\bE)$ and is an operator ideal in the sense that if $\bH'$ and $\bE'$ are Hilbert and Banach spaces respectively such that $S_1\in\Lin(\bH',\bH)$ and $S_2\in\Lin(\bE,\bE')$ then $R\in \gamma(\bH,\bE)$ implies $S_2RS_1\in\gamma(\bH',\bE')$ with
\[
\norm{S_2RS_1}_{\gamma(\bH',\bE')}\le \norm{S_2}_{\Lin(\bE,\bE')}\norm{R}_{\gamma(\bH,\bE)}\norm{S_1}_{\Lin(\bH',\bH)}
\]
It can also be proved that $R\in \gamma(\bH,\bE)$ iff $RR^*$ is the covariance operator of a centered Gaussian measure on $\B(\bE),$ and if $\bE$ is a Hilbert space, then $\gamma(\bH,\bE)$ coincides with the space of Hilbert-Schmidt operators from $\bH$ into $\bE$ (see e.g. \cite{vn0} and the references therein). There is also a very useful characterization of $\gamma-$radonifying operators if $\bE$ is a $L^p-$space,
\begin{lem}[\cite{vnvw}, Lemma 2.1]\label{gammalp}
Let $(S,\mathfrak{A},\rho)$ be a $\sigma-$finite measure space and let $p\in [1,\infty).$ Then, for an operator $R\in\Lin(\bH,L^p(S))$ the following assertions are equivalent
\begin{enumerate}
  \item $R\in\gamma(\bH,L^p(S)),$
  \item There exists a function $g\in L^p(S)$ such that for all $y\in\bH$ we have
  \[
  \abs{(Ry)(s)}\le \abs{y}_\bH\cdot g(s), \ \ \ \rho-\mbox{a.e.} \ s\in S.
  \]
\end{enumerate}
In such situation, there exists a constant $c>0$ such that $\norm{R}_{\gamma(\bH,L^p(S))}\le c\abs{g}_{L^p(S)}.$
\end{lem}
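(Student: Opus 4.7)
The plan is to use the Kahane--Khintchine inequality to convert the $\gamma$-radonifying norm into a pointwise (on $S$) Gaussian quadratic sum, and then apply a Riesz-type representation at each point $s\in S$. Concretely, fix an orthonormal basis $(e_k)_k$ of $\bH$ and recall that for a Gaussian series $\sum_k \gamma_k x_k$ in a Banach space $\bE$, the Kahane--Khintchine inequality gives $\Exp\bigl\|\sum_k\gamma_k x_k\bigr\|_\bE^p \asymp \bigl(\Exp\bigl\|\sum_k\gamma_k x_k\bigr\|_\bE^2\bigr)^{p/2}$. Applied with $\bE=L^p(S)$ and combined with Fubini (exchanging $\Exp$ and $\int_S$, legitimate since everything is nonnegative), together with the fact that $\sum_k\gamma_k(Re_k)(s)$ is a centered Gaussian on $\R$ with variance $\sum_k|(Re_k)(s)|^2$, this yields
\[
\norm{R}_{\gamma(\bH,L^p(S))}^p \,\asymp\, \int_S \Bigl(\sum_k |(Re_k)(s)|^2\Bigr)^{p/2}\,d\rho(s).
\]

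For the implication \emph{(2)}$\Rightarrow$\emph{(1)}, fix representatives of $Re_k$ and discard a single null set $N\subset S$ off of which $|(Re_k)(s)|\le g(s)$ holds for every $k$. By linearity and the hypothesis applied to $y=\sum_{k=1}^{n}\alpha_k e_k$, the functional $y\mapsto (Ry)(s)$ is bounded on a dense subset of $\bH$ by $g(s)\cdot|y|_\bH$ for each $s\notin N$; by Riesz representation there exists $h_s\in\bH$ with $|h_s|_\bH\le g(s)$ and $(Ry)(s)=[h_s,y]_\bH$ for all $y\in\bH$. Parseval gives $\sum_k|(Re_k)(s)|^2=|h_s|_\bH^2\le g(s)^2$, so by the above display and dominated convergence (applied to partial sums) the series $\sum_k \gamma_k Re_k$ converges in $L^p(\Omega;L^p(S))$, hence by Kahane--Khintchine also in $L^2(\Omega;L^p(S))$, and $\norm{R}_{\gamma(\bH,L^p(S))}\le c\,|g|_{L^p(S)}$.

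For \emph{(1)}$\Rightarrow$\emph{(2)}, since $R\in\gamma(\bH,L^p(S))$ the right-hand side of the displayed equivalence is finite, so the nonnegative measurable function
\[
g(s)\,:=\,\Bigl(\sum_{k\ge 1} |(Re_k)(s)|^2\Bigr)^{1/2}
\]
belongs to $L^p(S)$ (and is independent of the chosen basis, up to a null set, as can be checked by expanding any other ONB in terms of $(e_k)_k$). For $y=\sum_k\alpha_k e_k\in\bH$ with $\sum_k|\alpha_k|^2=|y|_\bH^2$, the Cauchy--Schwarz inequality gives $|(Ry)(s)|=\bigl|\sum_k\alpha_k(Re_k)(s)\bigr|\le |y|_\bH\cdot g(s)$ for $\rho$-a.e.\ $s$, as required.

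The main technical obstacle is the measurability/null-set bookkeeping in both directions: the values $(Re_k)(s)$ are only defined up to a $\rho$-null set, so one must fix representatives once and for all and verify that the pointwise inequalities and identities extend from the countable dense set $\{\sum_{k=1}^n q_k e_k : q_k\in\Qnu,\,n\in\mathds{N}\}$ to all of $\bH$ after discarding a single null set. Everything else is then a routine application of Kahane--Khintchine, Fubini, and Riesz representation.
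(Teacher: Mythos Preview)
Your argument is correct and is essentially the standard proof of this square-function characterisation of $\gamma$-radonifying operators into $L^p$-spaces. Note that the paper does not supply its own proof of this lemma; it simply quotes the result from \cite{vnvw}, so there is nothing to compare against beyond the cited source, whose proof proceeds along the same lines (Kahane--Khintchine plus Fubini to reduce to the scalar square function $s\mapsto\bigl(\sum_k|(Re_k)(s)|^2\bigr)^{1/2}$, then Cauchy--Schwarz/Parseval pointwise).
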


\begin{defi}
$\phantom{banach}$
\begin{enumerate}
\item  A Banach space $\bE$ is said to be of \emph{martingale type} $2$ (and we write $\bE$ is \emph{M-type} $2$) iff there exists a constant $C_2>0$ such that
\begin{equation}\label{mtype2}
\sup_{n} \mathbb{E} | M_{n} |_\bE ^{2} \le C_2 \sum_{n} \mathbb{E}  | M_{n}-M_{n-1} |_\bE ^{2}
\end{equation}
for any $\bE-$valued discrete martingale $\{M_{n}\}_{n\in \mathds{N}}$ with $M_{-1}=0.$

\smallskip\item $\bE$ is said to be of \emph{type} $2$ iff there exists $K_2>0$ such that
\begin{equation}
 \Exp\Bigl| \sum_{i=1}^n \epsilon_i x_i\Bigr|_\bE^2 \le K_2\sum_{i=1}^n | x_i |_\bE^2
\end{equation}
for any finite sequence $\{\epsilon_i\}_{i=1}^n$ of $\{-1,1\}-$valued symmetric i.i.d. random variables and for any finite sequence $\{x_i\}_{i=1}^n$ of elements of $\bE.$
\end{enumerate}
\end{defi}
\begin{prop}[\cite{brz1}, Proposition 2.11]
Let $\bE$ be a UMD and type 2 Banach space. Then $\bE$ is M-type 2.
\end{prop}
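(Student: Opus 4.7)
The goal is to dominate the $L^2$-norm of a generic $\bE$-valued discrete martingale's total sum by the sum of the $L^2$-norms of its increments, using only the UMD inequality (which randomizes martingale differences with deterministic signs) and the type~$2$ inequality (which controls Rademacher sums of deterministic vectors). The natural plan is a ``randomization + Fubini + type 2'' argument.

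First, I would record that UMD in fact gives a \emph{two-sided} comparison. Indeed, if $\{M_n\}$ is an $\bE$-valued martingale with $M_{-1}=0$ and differences $d_n:=M_n-M_{n-1}$, then for any deterministic sign sequence $(\varepsilon_n)\in\{\pm1\}^{\mathds{N}}$, the process $M'_n:=\sum_{k\le n}\varepsilon_k d_k$ is again a martingale (with respect to the same filtration) whose differences are $\varepsilon_k d_k$. Applying the UMD inequality to $\{M'_n\}$ with the \emph{same} sign sequence $(\varepsilon_k)$, and using $\varepsilon_k^2=1$, yields
\[
\Bigl\|\sum_{k=0}^n d_k\Bigr\|_{L^2(\Omega;\bE)}\le c\,\Bigl\|\sum_{k=0}^n \varepsilon_k d_k\Bigr\|_{L^2(\Omega;\bE)},
\]
valid for every deterministic choice of signs, with the same constant $c$ as in the UMD definition (with $p=2$).

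Next, I would introduce, on an auxiliary probability space $(\tilde\Omega,\tilde{\mathcal F},\tilde{\mathds{P}})$, a sequence $(\varepsilon_k)$ of i.i.d.\ symmetric $\{\pm 1\}$-valued (Rademacher) variables, independent of the martingale. Since the previous inequality holds for \emph{each} realisation of the signs, I may average its square over $\tilde{\mathds{P}}$ and then apply Fubini:
\[
\mathbb{E}\Bigl|\sum_{k=0}^n d_k\Bigr|_{\bE}^{2}
\le c^{2}\,\tilde{\mathbb{E}}\,\mathbb{E}\Bigl|\sum_{k=0}^n \varepsilon_k d_k\Bigr|_{\bE}^{2}
= c^{2}\,\mathbb{E}\Bigl(\tilde{\mathbb{E}}\Bigl|\sum_{k=0}^n \varepsilon_k d_k(\omega)\Bigr|_{\bE}^{2}\Bigr).
\]
For each fixed $\omega$, the vectors $d_0(\omega),\dots,d_n(\omega)$ are deterministic elements of $\bE$, so the type~$2$ property applied pointwise gives
\[
\tilde{\mathbb{E}}\Bigl|\sum_{k=0}^n \varepsilon_k d_k(\omega)\Bigr|_{\bE}^{2}
\le K_2\sum_{k=0}^n |d_k(\omega)|_{\bE}^{2}.
\]
Integrating this bound over $\Omega$ and combining with the previous display yields
\[
\mathbb{E}\Bigl|\sum_{k=0}^n d_k\Bigr|_{\bE}^{2}\le c^{2}K_2\sum_{k=0}^n \mathbb{E}|d_k|_{\bE}^{2},
\]
and taking the supremum over $n$ gives the M-type $2$ inequality \eqref{mtype2} with constant $C_2=c^{2}K_2$.

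There is no real obstacle here: the one subtlety is the first step, namely extracting the reverse UMD inequality (the definition stated in the paper is only a one-sided bound), but this is automatic because $(\varepsilon_k d_k)$ is again a martingale difference sequence to which the very same inequality can be reapplied. Everything else is a routine Fubini interchange plus a direct invocation of the type~$2$ definition.
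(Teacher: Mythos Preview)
The paper does not supply a proof of this proposition; it merely cites \cite{brz1}, Proposition~2.11. Your argument is correct and is the standard one: UMD yields the two-sided comparison with sign-randomized martingale differences (your first step is the usual trick of reapplying the UMD inequality to the transformed martingale), and type~2 then controls the Rademacher sum pointwise in~$\omega$ via Fubini. The only point worth making explicit is that the paper's UMD definition postulates the inequality for \emph{some} $p\in(1,\infty)$, whereas you invoke it with $p=2$; this is harmless because UMD for one exponent implies UMD for all $p\in(1,\infty)$ (Burkholder), but a sentence to that effect would make your write-up fully self-contained.
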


\begin{ex}
Let $\mathcal{O}$ be a bounded domain in $\R^d.$ Then the Lebesgue spaces $L^{p}(\mathcal{O})$ are both type 2 and M-type $2,$ for $p\in [2,\infty).$
\end{ex}

If $\bE$ is a M-type 2 Banach space, it is easy to show (see e.g. \cite{dett}) that the stochastic integral $I_T(\Phi)$ for elementary processes $\Phi(\cdot)$ satisfies
\begin{equation}\label{burk0}
\Exp\abs{I_T(\Phi)}_\bE^2\le C_2 \Exp\int_0^T\norm{\Phi(s)}_{\gamma(\bH,\bE)}^2\,ds
\end{equation}
where $C_2$ is the same constant in (\ref{mtype2}). Since the set of elementary processes is dense in $\M^2(0,T;\gamma(\bH,\bE))$ (see e.g. \cite[Ch. 2, Lemma 18]{neidhardt}) by (\ref{burk0}) the linear mapping $I_T$ extends to a bounded linear operator from $\M^2(0,T;\gamma(\bH,\bE))$ into $L^2(\Omega;\bE).$ We denote this operator also by $I_T.$

Finally, for each $t\in [0,T]$ and $\Phi\in \M^2(0,T;\gamma(\bH,\bE)),$ we define
\[
\int_0^t\Phi(s)\,dW(s):=I_T(\mathbf{1}_{[0,t)}\Phi).
\]
The process $\int_0^t\Phi(s)\,dW(s), \ t\in [0,T],$ is a martingale with respect to $\mathds{F}.$ Moreover, we have the following Burkholder Inequality
\begin{prop}
Let $\bE$ be a M-type 2 Banach space. Then, for any $p\in (0,+\infty)$ there exists a constant $C=C(p,\bE)$ such that for all $\Phi\in\M^2(0,T;\gamma(\bH,\bE))$ we have
\[
\Exp\biggl[\sup_{\, t\in [0,T]}\Bigl|\int_0^t\Phi(s)\,dW(s)\Bigr|_\bE^p\biggr]\le{\textstyle{\left(\frac{p}{p-1}\right)^p}} C\cdot\Exp\left[\left(\int_0^T\norm{\Phi(s)}_{\gamma(\bH,\bE)}^2\,ds\right)^{p/2}\right]
\]
\end{prop}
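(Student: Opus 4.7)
The plan is to first reduce to elementary processes by the density of elementary processes in $\M^2(0,T;\gamma(\bH,\bE))$, for which $M_t:=\int_0^t\Phi(s)\,dW(s)$ is defined pathwise as a finite sum and is manifestly an $\bE$-valued $\mathds{F}$-martingale. By the extension described just before the statement, it suffices to prove the bound for elementary $\Phi$ and then pass to the limit in $L^p$ using the already established $L^2$-isometry (\ref{burk0}) together with the $L^p$ inequality we are about to establish.

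The key step is to separate two distinct ingredients: (i) Doob's maximal inequality in $L^p$ for $\bE$-valued martingales, which is responsible for the explicit factor $\left(\tfrac{p}{p-1}\right)^p$, and (ii) an endpoint $L^p$-bound of Burkholder--Davis--Gundy type at time $T$. More precisely, since $\xi\mapsto|\xi|_\bE$ is a convex non-negative function, $\{|M_t|_\bE\}_{t\in[0,T]}$ is a real-valued submartingale and Doob's inequality gives, for every $p>1$,
\[
\Exp\Bigl[\sup_{t\in[0,T]}|M_t|_\bE^p\Bigr]\le \Bigl(\tfrac{p}{p-1}\Bigr)^p\Exp|M_T|_\bE^p.
\]
It then remains to prove the endpoint inequality
\begin{equation}\label{eq:endpoint-plan}
\Exp|M_T|_\bE^p\le C(p,\bE)\cdot\Exp\left(\int_0^T\norm{\Phi(s)}_{\gamma(\bH,\bE)}^2\,ds\right)^{p/2}.
\end{equation}

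To establish \eqref{eq:endpoint-plan}, I would discretise: for an elementary $\Phi$ based on a partition $0=t_0<\dots<t_N=T$ with blocks $\Phi_n:=\Phi|_{[t_n,t_{n+1})}$, the increments $\Delta_n:=\int_{t_n}^{t_{n+1}}\Phi(s)\,dW(s)$ form an $\bE$-valued martingale difference sequence with respect to $\{\Fil_{t_n}\}$, and $M_T=\sum_{n}\Delta_n$. Since $\bE$ is M-type $2$, the quadratic-variation style inequality \eqref{mtype2} combined with Burkholder's inequality for $\bE$-valued martingales (valid in M-type $2$ spaces; see \cite{brz1}, building on Pisier's characterisation) yields, for $p\ge 2$,
\[
\Exp|M_T|_\bE^p\le C_p\,\Exp\Bigl(\sum_n\Exp[|\Delta_n|_\bE^2\mid\Fil_{t_n}]\Bigr)^{p/2},
\]
and on each block the isometry (\ref{burk0}) conditioned on $\Fil_{t_n}$ identifies $\Exp[|\Delta_n|_\bE^2\mid\Fil_{t_n}]$ with $\int_{t_n}^{t_{n+1}}\norm{\Phi(s)}_{\gamma(\bH,\bE)}^2\,ds$. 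For $p\in(1,2)$, \eqref{eq:endpoint-plan} is then obtained from the $p=2$ case by a standard Lenglart / good-$\lambda$ argument applied to the scalar submartingale $|M|_\bE$ with predictable control $\int_0^\cdot\norm{\Phi}^2\,ds$.

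The main obstacle is the discrete M-type 2 BDG bound in step \eqref{eq:endpoint-plan} for general $p$: passing from the defining $L^2$ inequality \eqref{mtype2} to an $L^p$ version for $\bE$-valued martingale differences requires either invoking Pisier's theorem identifying M-type $2$ with the validity of the two-sided BDG inequality for martingale differences, or running a direct truncation/Davis decomposition in the Banach setting. Once this is in place, the remaining pieces — Doob's maximal inequality, the conditional isometry on each block of an elementary process, and closure of the stochastic integral in $\M^2(0,T;\gamma(\bH,\bE))$ — are routine, and Fatou's lemma lets us pass to general $\Phi\in\M^2(0,T;\gamma(\bH,\bE))$ by approximation, completing the proof.
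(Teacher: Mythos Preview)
The paper does not actually prove this proposition: it simply cites Theorem 2.4 of \cite{brz1}. Your outline therefore goes well beyond what the paper itself provides, and the strategy you describe --- Doob's $L^p$ maximal inequality to extract the explicit factor $\bigl(\tfrac{p}{p-1}\bigr)^p$, followed by an endpoint bound $\Exp|M_T|_\bE^p\le C(p,\bE)\,\Exp\bigl(\int_0^T\norm{\Phi}_{\gamma(\bH,\bE)}^2\,ds\bigr)^{p/2}$ obtained from the M-type~2 structure --- is exactly the route taken in the cited reference. You also correctly flag the only genuinely nontrivial step: lifting the defining $L^2$ inequality \eqref{mtype2} to an $L^p$ square-function inequality for $\bE$-valued martingale differences, which indeed relies on Pisier's characterisation of M-type~2 or an equivalent Davis-type decomposition.

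Two small remarks. First, \eqref{burk0} is an \emph{inequality}, not an isometry, so the conditional version gives $\Exp\bigl[|\Delta_n|_\bE^2\,\big|\,\Fil_{t_n}\bigr]\le C_2\int_{t_n}^{t_{n+1}}\norm{\Phi(s)}_{\gamma(\bH,\bE)}^2\,ds$ rather than an identification; this is of course harmless for the upper bound you need. Second, the statement is written for all $p\in(0,+\infty)$, yet the factor $\bigl(\tfrac{p}{p-1}\bigr)^p$ is meaningless for $p\le 1$ and your Doob step only applies for $p>1$. This is a defect of the statement as printed rather than of your argument: in the cited source the inequality is established for $p>1$ via Doob, and the range $p\in(0,1]$ --- if intended at all --- would require a separate Lenglart-type argument with a different constant. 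Your Lenglart/good-$\lambda$ treatment of the endpoint for $p\in(1,2)$ is the standard way to close that case.
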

\begin{proof}
See Theorem 2.4 in \cite{brz1}.
\end{proof}
Let $M(\cdot)$ be a $\bE-$valued continuous martingale with respect to the filtration $\mathds{F}=\{\Fil_t\}_{t\ge 0}$ and let $\seq{\cdot,\cdot}$ denote the duality between $\bE$ and $\bE^*.$ The \emph{cylindrical quadratic variation} of $M(\cdot),$ denoted by $[M],$ is defined as the (unique) cylindrical process (or linear random function) with values in $\Lin(\bE^*,\bE)$ that is $\mathds{F}-$adapted, increasing and satisfies
\begin{enumerate}
\item $[M](0)=0$
\item for arbitrary $x^*,y^*\in\bE^*,$ the real-valued process
\[
\seq{M(t),x^*}\seq{M(t),y^*}-\seq{[M](t)x^*,y^*}, \ \ \ t\geq 0
\]
is a martingale with respect to $\mathds{F}.$
\end{enumerate}
For more details on this definition we refer to \cite{dett}. We will need the following version of the Martingale Representation Theorem in M-type 2 Banach spaces, see Theorem 2.4 in \cite{dett} (see also \cite{ondre2}),
\begin{theorem}\label{martrep}
Let $(\Omega,\Fil,\mathds{F},\Prob)$ be a filtered probability space and let $\bE$ be a separable M-type 2 Banach space. Let $M(\cdot)$ be a $\bE-$valued continuous square integrable $\mathds{F}-$martingale with cylindrical quadratic variation process of the form
\[
[M](t)=\int_0^t g(s)\circ g(s)^*\,ds, \ \ \ \ t\in [0,T],
\]
where $g\in\M^2(0,T;\gamma(\bH,\bE)).$ Then, there exists a probability space $(\tOmega,\tFil,\tProb),$ extension of $(\Omega,\Fil,\Prob),$ and a $\bH-$cylindrical Wiener process $\{\tilde{W}(t)\}_{t\ge 0}$ defined on $(\tOmega,\tFil,\tProb),$ such that
\[
M(t)=\int_0^t g(s)\,d\tilde {W}(s), \ \ \ \tProb-\text{a.s.}, \ \ \ t\in [0,T].
\]
\end{theorem}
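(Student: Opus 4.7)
The plan is to adapt the classical Da Prato--Zabczyk martingale representation construction to the M-type~2 Banach setting, as done by Dettweiler and Ondreját. The intuition is to recover the driving Wiener process from $M$ by ``inverting'' $g$ on its range, while supplying the missing directions (those lying in the kernel of $g(s)$) by using an independent cylindrical Wiener process defined on a suitable enlargement of the probability space.

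Concretely, I would first take an auxiliary filtered probability space $(\Omega',\Fil',(\Fil'_t)_t,\Prob')$ carrying an $\bH$-cylindrical Wiener process $B$ independent of $M$, and set $\tOmega := \Omega\times\Omega'$, $\tFil := \Fil\otimes\Fil'$, $\tProb := \Prob\otimes\Prob'$, with filtration $\tilde{\mathds{F}} = (\Fil_t\otimes\Fil'_t)_t$; the processes $M$, $g$ and $B$ are lifted to $\tOmega$ canonically. For each $(s,\omega)$, since $g(s,\omega)\in\gamma(\bH,\bE)$ and $\bH$ is Hilbert, the orthogonal projection $P(s,\omega):\bH\to\bH$ onto $\overline{\range g(s,\omega)^*}$ and the Moore--Penrose pseudo-inverse $g(s,\omega)^\dagger$ are well defined, and a standard measurable-selection argument ensures progressive measurability of the $\Lin(\bH,\bE^*)$-valued process $s\mapsto(g(s)^\dagger)^*P(s)$. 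I would then define the candidate cylindrical process by
\[
\tilde W(t)h \;:=\; \int_0^t \seq{dM(s),\,(g(s)^\dagger)^* P(s)h}
\;+\; \int_0^t \bigl[(I-P(s))h,\,dB(s)\bigr]_\bH, \qquad h\in\bH,
\]
where the first summand is the real-valued stochastic integral of a predictable $\bE^*$-valued integrand against the $\bE$-valued martingale $M$ (constructed as in \cite{dett}). The quadratic-variation identity $[M](t)=\int_0^t g(s)\circ g(s)^*\,ds$, together with the relation $g(s)^*(g(s)^\dagger)^* P(s) = P(s)$ and the independence of $B$ from $M$, shows that $t\mapsto\tilde W(t)h$ is a continuous $\tilde{\mathds{F}}$-martingale whose quadratic variation equals $\int_0^t \abs{P(s)h}_\bH^2\,ds + \int_0^t \abs{(I-P(s))h}_\bH^2\,ds = t\abs{h}_\bH^2$; hence by Lévy's characterization it is a Brownian motion, and a polarization argument delivers the correct cross-covariances, so $\tilde W$ is an $\bH$-cylindrical Wiener process relative to $\tilde{\mathds{F}}$. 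Finally, approximating $g$ by elementary processes and using $g(s)(I-P(s))=0$ together with $g(s)g(s)^\dagger=\mathrm{id}$ on $\overline{\range g(s)}$ yields $\int_0^t g(s)\,d\tilde W(s) = M(t)$, $\tProb$-a.s.

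The main obstacle is the rigorous construction of the real-valued stochastic integral against the $\bE$-valued martingale $M$ for predictable $\bE^*$-valued integrands, together with the progressive measurability of the pseudo-inverse $g^\dagger$ on a set where $\range g(s)$ need not be closed. The quadratic-variation computation for $\tilde W(t)h$ only closes up cleanly in the Banach setting because $g(s)\in\gamma(\bH,\bE)$ and $\bE$ is M-type~2: the $\gamma$-radonifying property provides the compatibility between $g(s)^*$ and the Hilbertian operations on $\bH$ needed to match $t\abs{h}_\bH^2$, while the M-type~2 hypothesis delivers the $L^2$-isometries that in the Hilbert-space case come for free from the Itô isometry.
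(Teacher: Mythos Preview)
The paper does not prove this theorem: it is quoted as a known result with the citation ``see Theorem 2.4 in \cite{dett} (see also \cite{ondre2})'' and no argument is supplied. So there is no in-paper proof to compare against.

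That said, your sketch is exactly the construction carried out in those references: enlarge the probability space by an independent $\bH$-cylindrical Wiener process, invert $g(s)$ on the closure of its range via a pseudo-inverse, fill in the orthogonal complement with the auxiliary noise, and verify the cylindrical Wiener property through the covariance computation. The obstacles you flag --- building the real-valued stochastic integral of an $\bE^*$-valued predictable integrand against the $\bE$-valued martingale $M$, and the measurable selection of the pseudo-inverse --- are precisely the technical points that \cite{dett} and \cite{ondre2} handle in detail, so citing them (as the paper does) is the honest route. One small caveat: the identity $g(s)g(s)^\dagger=\mathrm{id}$ on $\overline{\range g(s)}$ is delicate when $\range g(s)$ is not closed (the pseudo-inverse is then unbounded), so in a full write-up you would either approximate $g$ by finite-rank operators first or work with the polar decomposition as Ondrej\'at does; but as a proof plan your outline is sound and matches the literature the paper invokes.
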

%
Finally, we recall some aspects of the factorization method for stochastic convolutions in UMD type-2 Banach spaces. Recall that, under Assumption \ref{Assum1}, the operator $-A$ generates an analytic $C_0-$semigroup $(S_t)_{t\geq 0}$ on $\bE.$
\begin{lem}[\cite{brzgat}, Lemma 3.7]\label{Lem:A3}
Let $\bE$ be a UMD type-2 Banach space and let Assumption \ref{Assum1} be satisfied. Let $p\geq 2, \ \sigma\in[0,\frac{1}{2})$ and
$g(\cdot)$ be a $\Lin(\bH,\bE)-$valued stochastic process satisfying
\begin{equation}\label{sigma-eta}
A^{-\sigma} g(\cdot) \in \mathcal{M}^p\left(0,T;\gamma(\bH,\bE)\right).
\end{equation}
Let $\alpha>0$ be such that $\alpha+\sigma<\frac{1}{2}.$ Then, for every $t\in [0,T]$ the stochastic integral
\begin{equation}
y(t):=
{1 \over \Gamma(1 - \alpha)}
\int_0^t (t-r)^{-\alpha} S_{t-r}g(r)\, dW(r),
\label{2.17}
\end{equation}
exists and the process $y(\cdot)$ satisfies
\begin{equation}
|\!|y|\!|_{\mathcal{M}^p\left(0,T;\bE\right)}
\le C\,T^{{1\over 2}-\alpha-\sigma}
|\!| A^{-\sigma }g|\!|_{\mathcal{M}^p\left(0,T;\gamma(\bH,\bE)\right)}
\label{equ:A3}
\end{equation}
for some constant $C=C(\alpha,p,A,\bE),$ independent of $g(\cdot)$ and $T.$ In particular, the process $y(\cdot)$ has trajectories in $L^p(0,T;\bE),$ $\Prob-$a.s.
\end{lem}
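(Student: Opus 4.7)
The plan is to reduce everything to an estimate of $\|\Phi_t(r)\|_{\gamma(\bH,\bE)}$ where $\Phi_t(r):=(t-r)^{-\alpha}S_{t-r}g(r)\mathbf{1}_{[0,t)}(r)$, then use the Burkholder inequality (available because a UMD type-2 space is M-type 2), and finally absorb the Volterra-type singularity by Jensen and Fubini.

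For the pointwise estimate, I would factor
\[
\Phi_t(r)=(t-r)^{-\alpha}\bigl(A^{\sigma}S_{t-r}\bigr)\bigl(A^{-\sigma}g(r)\bigr),
\]
noting that $A^\sigma S_{t-r}$ makes sense as a bounded operator on $\bE$ since $-A$ generates an analytic semigroup (by Assumption \ref{Assum1} and the result of Pr\"uss-Sohr cited right after it), and it satisfies the standard analytic-semigroup bound $\|A^{\sigma}S_{t-r}\|_{\Lin(\bE)}\le C(t-r)^{-\sigma}$. Combining this with the operator-ideal property of $\gamma(\bH,\bE)$ yields
\[
\|\Phi_t(r)\|_{\gamma(\bH,\bE)}\le C\,(t-r)^{-(\alpha+\sigma)}\,\|A^{-\sigma}g(r)\|_{\gamma(\bH,\bE)}.
\]
Since $\alpha+\sigma<\tfrac{1}{2}$, this ensures $\Phi_t\in\mathcal{M}^2(0,t;\gamma(\bH,\bE))$, so the stochastic integral $y(t)$ is well defined.

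For the $\mathcal{M}^p$ bound, the Burkholder inequality for M-type 2 spaces gives, pointwise in $t$,
\[
\Exp|y(t)|_{\bE}^{p}\le C\,\Exp\Bigl(\int_0^{t}(t-r)^{-2(\alpha+\sigma)}\,\psi(r)^{2}\,dr\Bigr)^{p/2},
\]
with $\psi(r):=\|A^{-\sigma}g(r)\|_{\gamma(\bH,\bE)}$. Setting $\beta:=2(\alpha+\sigma)\in[0,1)$ and applying the Jensen-type inequality $\bigl(\int f\,d\mu\bigr)^{p/2}\le\mu([0,t])^{p/2-1}\int f^{p/2}\,d\mu$ with the finite measure $d\mu=(t-r)^{-\beta}\,dr$ on $[0,t]$ (which is the point where $\alpha+\sigma<\tfrac12$ is essential) gives
\[
\Exp|y(t)|_\bE^{p}\le C\,T^{(1-\beta)(p/2-1)}\,\Exp\!\int_0^{t}(t-r)^{-\beta}\psi(r)^{p}\,dr.
\]

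Integrating over $t\in[0,T]$ and switching the order of integration by Fubini, the Volterra kernel contributes a further factor $\int_r^T(t-r)^{-\beta}dt\le T^{1-\beta}/(1-\beta)$, so altogether
\[
\|y\|_{\mathcal{M}^p(0,T;\bE)}^{p}\le C\,T^{(1-\beta)p/2}\,\|A^{-\sigma}g\|_{\mathcal{M}^p(0,T;\gamma(\bH,\bE))}^{p},
\]
which after taking $p$-th roots is precisely \eqref{equ:A3} with exponent $(1-\beta)/2=\tfrac12-\alpha-\sigma$. The fact that $y(\cdot)\in L^p(0,T;\bE)$ almost surely then follows from the $\mathcal{M}^p$ bound.

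The one delicate point I expect is the verification of the analytic-semigroup estimate $\|A^{\sigma}S_{t}\|_{\Lin(\bE)}\le Ct^{-\sigma}$ in our UMD/BIP setting, since $A$ is only assumed to be positive with bounded imaginary powers; however, this is standard from the analyticity of $(S_t)$ together with the moment inequality for fractional powers (as in \cite{amann}), and the remaining computations are purely deterministic. Everything else is routine: Burkholder, Jensen and Fubini.
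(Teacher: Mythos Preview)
Your proof is correct and is essentially the standard argument. Note, however, that the paper does not give its own proof of this lemma: it is quoted verbatim from \cite{brzgat}, Lemma~3.7, and no proof appears in the present paper. Your outline is precisely the argument one finds in that reference (and, more generally, in the factorization literature going back to Da~Prato--Kwapie\'n--Zabczyk): factor $S_{t-r}g(r)=A^{\sigma}S_{t-r}\cdot A^{-\sigma}g(r)$, use the analytic-semigroup bound $\|A^{\sigma}S_s\|_{\Lin(\bE)}\le C s^{-\sigma}$ together with the ideal property of $\gamma(\bH,\bE)$, apply Burkholder's inequality (valid since UMD type~2 implies M-type~2, as recalled in the paper), and then handle the $p$-th moment via the Jensen/Fubini manipulation with the kernel $(t-r)^{-2(\alpha+\sigma)}$.

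The only point worth a remark is the bound $\|A^{\sigma}S_t\|_{\Lin(\bE)}\le C t^{-\sigma}$, which you flag yourself. Under Assumption~\ref{Assum1} (so that $-A$ generates a bounded analytic semigroup by the Pr\"uss--Sohr result cited in the paper) this is standard: $\|A S_t\|\le C t^{-1}$ holds for any bounded analytic semigroup, and the fractional version then follows from the moment inequality $\|A^{\sigma}x\|\le C\|Ax\|^{\sigma}\|x\|^{1-\sigma}$. So there is no gap.
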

\begin{theorem}[\cite{brz1}, Theorem 3.2]\label{Th:2.1}
Under the same assumptions of Lemma \ref{Lem:A3}, the stochastic convolution
\begin{equation}\label{2.19}
v(t)=\int^t_0 S_{t-r}g (r)\,dW(r), \ \  t\in [0,T],
\end{equation}
is well-defined and there exists a modification $\tv(\cdot)$ of $v(\cdot)$ such that ${\tilde v}(\cdot)\in D(\Lambda_T^\alpha),$ $\Prob-$a.s.
and the following `factorization formula' holds
\begin{equation}
\tv(t)=(\Lambda_T^{-\alpha}y)(t), \ \ \ \Prob-\text{a.s.}, \ \ t\in [0,T],
\label{2.splot}
\end{equation}
where $y(\cdot)$ is the process defined in \rf{2.17}. Moreover,
\[
\Exp|\!|{\tv(\cdot)}|\!|^p_{D(\Lambda_T^\alpha)} \le
C^pT^{p({1 \over 2} -\alpha-\sigma)}|\!| A^{-\sigma }g|\!|_{\mathcal{M}^p\left(0,T;\gamma(\bH,\bE)\right)}.
\]
\end{theorem}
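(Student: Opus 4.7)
The plan is to apply the classical Da Prato--Kwapień--Zabczyk factorization identity in the present Banach-space setting and combine it with Lemma \ref{Lem:A3} and Proposition \ref{Prop:2.1}. First, by Lemma \ref{Lem:A3}, the assumptions on $g$, $\sigma$ and $\alpha$ guarantee that the process
\[
y(t)=\frac{1}{\Gamma(1-\alpha)}\int_0^t (t-r)^{-\alpha}S_{t-r}g(r)\,dW(r),\quad t\in[0,T],
\]
is well defined with $y\in\mathcal{M}^p(0,T;\bE)$ and, more precisely,
\[
\|y\|_{\mathcal{M}^p(0,T;\bE)}\le C\,T^{\frac12-\alpha-\sigma}\|A^{-\sigma}g\|_{\mathcal{M}^p(0,T;\gamma(\bH,\bE))}.
\]
Since $\Lambda_T^{-\alpha}$ is a bounded operator on $L^p(0,T;\bE)$ by Proposition \ref{Prop:2.1}, I define
\[
\tv(t):=(\Lambda_T^{-\alpha}y)(t)=\frac{1}{\Gamma(\alpha)}\int_0^t(t-s)^{\alpha-1}S_{t-s}y(s)\,ds,
\]
which automatically belongs to $D(\Lambda_T^\alpha)$ with $\Lambda_T^\alpha\tv=y$, $\Prob$-a.s., giving at once the norm bound claimed in the theorem.

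Next I would show that $\tv(t)=v(t)$ for almost every $t\in[0,T]$, $\Prob$-a.s. Substituting the definition of $y$ and formally interchanging the $ds$-integral with the stochastic integral,
\begin{align*}
\tv(t)
&=\frac{1}{\Gamma(\alpha)\Gamma(1-\alpha)}\int_0^t(t-s)^{\alpha-1}S_{t-s}\!\left[\int_0^s(s-r)^{-\alpha}S_{s-r}g(r)\,dW(r)\right]ds\\
&=\frac{1}{\Gamma(\alpha)\Gamma(1-\alpha)}\int_0^t\!\left[\int_r^t(t-s)^{\alpha-1}(s-r)^{-\alpha}\,ds\right]S_{t-r}g(r)\,dW(r),
\end{align*}
where in the second line I used the semigroup law $S_{t-s}S_{s-r}=S_{t-r}$ and the change of order of integration. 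The classical beta-function identity
\[
\int_r^t(t-s)^{\alpha-1}(s-r)^{-\alpha}\,ds=\frac{\pi}{\sin\pi\alpha}=\Gamma(\alpha)\Gamma(1-\alpha)
\]
then yields $\tv(t)=\int_0^t S_{t-r}g(r)\,dW(r)=v(t)$, which shows simultaneously that $v(t)$ is well defined and that $\tv$ is its modification.

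The only genuinely delicate step is the stochastic Fubini interchange between the Lebesgue integral in $s$ and the Itô integral in $r$; this is the main obstacle. To justify it I would fix $t\in(0,T]$, let $\Phi(s,r):=\mathbf{1}_{\{r<s\}}(t-s)^{\alpha-1}(s-r)^{-\alpha}S_{t-s}S_{s-r}g(r)$ viewed as an operator in $\gamma(\bH,\bE)$, and check the joint integrability condition for the stochastic Fubini theorem in M-type-$2$ Banach spaces (see e.g.\ the version in \cite{brz1,brzgat}). Using the ideal property of $\gamma(\bH,\bE)$ to write $\|S_{t-s}S_{s-r}g(r)\|_{\gamma(\bH,\bE)}\le \|S_{t-s}S_{s-r}A^{\sigma}\|_{\Lin(\bE)}\|A^{-\sigma}g(r)\|_{\gamma(\bH,\bE)}$ together with the analytic semigroup bound $\|S_uA^{\sigma}\|_{\Lin(\bE)}\le Cu^{-\sigma}$, Hölder's inequality and the hypothesis $\alpha+\sigma<\tfrac12$ reduces the verification to elementary bounds on products of the form $(t-s)^{\alpha-1}(s-r)^{-\alpha}(t-s)^{-\sigma}$, whose Lebesgue integrability follows from the choice of parameters. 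Once stochastic Fubini is justified, the identification $\tv=v$ follows from the beta-integral computation, and the proof is complete.
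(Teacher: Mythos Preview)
The paper does not supply its own proof of this theorem: the header explicitly cites it as \cite{brz1}, Theorem~3.2, and no argument is given in the text. Your proposal reproduces precisely the standard factorization argument of that reference---define $y$ via Lemma~\ref{Lem:A3}, set $\tv:=\Lambda_T^{-\alpha}y$ using Proposition~\ref{Prop:2.1}, and identify $\tv$ with the stochastic convolution through the stochastic Fubini theorem and the beta-integral identity---so your approach is both correct and the same as the one in the cited source.
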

\begin{cor}\label{Co:2}
Under the same assumptions of Theorem \ref{Th:2.1}, let $\delta$ satisfy
\begin{equation}
\delta + \sigma +\frac{1}{p}< { 1\over 2}.
\label{cond:2}
\end{equation}
Then, there exists a stochastic process $\tv(\cdot)$ satisfying
\begin{equation}
{\tilde v}(t)=\int^t_0 S_{t-r}g (r)\,dW(r), \ \  \Prob-\text{a.s.}, \ \
t \in [0,T],
\end{equation}
such that ${\tilde v}(\cdot)\in \mathcal{C}([0,T];D(A^\delta)),$ $\Prob-$a.s. and
\[
\Exp|\!|{\tilde v(\cdot)}|\!|^p_{\mathcal{C}([0,T];D(A^\delta))}
\le C_T
|\!| A^{-\sigma }g|\!|^p_{\mathcal{M}^p\left(0,T;\gamma(\bH,\bE)\right)}
\]
\end{cor}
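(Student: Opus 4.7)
The plan is to combine Theorem \ref{Th:2.1} (the factorization formula) with the smoothing property stated in Lemma \ref{L:reg}. The hypothesis $\delta+\sigma+\frac{1}{p}<\frac{1}{2}$ is exactly the condition needed to interpolate a suitable parameter $\alpha$ between the regime where Lemma \ref{Lem:A3} / Theorem \ref{Th:2.1} apply and the regime where Lemma \ref{L:reg} applies.

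First I would choose $\alpha\in(0,\frac12)$ satisfying
\[
\delta+\frac{1}{p}<\alpha<\frac{1}{2}-\sigma,
\]
which is possible precisely because of the hypothesis \rf{cond:2}. With this choice, $\alpha+\sigma<\frac{1}{2}$, so Theorem \ref{Th:2.1} applies: there is a modification $\tv(\cdot)$ of the stochastic convolution $v(\cdot)$ defined in \rf{2.19} such that $\tv\in D(\Lambda_T^\alpha)$ $\Prob$-a.s.\ and the factorization identity
\[
\tv(t)=(\Lambda_T^{-\alpha}y)(t),\quad t\in[0,T],\ \Prob\text{-a.s.},
\]
holds, where $y(\cdot)$ is the process from \rf{2.17}.

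Next, since $\alpha$ also satisfies $\delta+\frac{1}{p}<\alpha$, Lemma \ref{L:reg} guarantees that $\Lambda_T^{-\alpha}$ is a bounded operator from $L^p(0,T;\bE)$ into $\mathcal{C}([0,T];D(A^\delta))$. By Lemma \ref{Lem:A3}, $y(\cdot)$ has trajectories in $L^p(0,T;\bE)$ almost surely, so composing with $\Lambda_T^{-\alpha}$ yields $\tv(\cdot)\in\mathcal{C}([0,T];D(A^\delta))$ $\Prob$-a.s., together with a pathwise estimate of the form
\[
\|\tv\|_{\mathcal{C}([0,T];D(A^\delta))}\le \bigl\|\Lambda_T^{-\alpha}\bigr\|_{\Lin(L^p(0,T;\bE),\,\mathcal{C}([0,T];D(A^\delta)))}\cdot \|y\|_{L^p(0,T;\bE)}.
\]

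Finally I would raise this inequality to the $p$-th power, take expectation, and invoke the quantitative bound \rf{equ:A3} from Lemma \ref{Lem:A3} on $\|y\|_{\M^p(0,T;\bE)}$ to conclude
\[
\Exp\,\|\tv\|^p_{\mathcal{C}([0,T];D(A^\delta))}\le C_T\,\|A^{-\sigma}g\|^p_{\M^p(0,T;\gamma(\bH,\bE))},
\]
with $C_T$ absorbing the operator norm of $\Lambda_T^{-\alpha}$, the constant from \rf{equ:A3}, and the factor $T^{p(\frac12-\alpha-\sigma)}$. There is no genuine obstacle here; the only care required is in verifying that the admissible interval for $\alpha$ is non-empty, which is exactly the content of hypothesis \rf{cond:2}.
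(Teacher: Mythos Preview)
Your proof is correct and follows exactly the approach of the paper: choose $\alpha$ with $\delta+\frac{1}{p}<\alpha<\frac{1}{2}-\sigma$, then combine Theorem~\ref{Th:2.1} with Lemma~\ref{L:reg} (and the bound from Lemma~\ref{Lem:A3}). In fact you have written out the details more carefully than the paper, which merely records the choice of $\alpha$ and cites the two results.
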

\begin{proof}
Follows from Theorem \ref{Th:2.1} and Lemma \ref{L:reg}, by taking $\alpha$ such that
\[
\delta +\frac{1}{p}<\alpha<\sigma- { 1\over 2}.
\]
\end{proof}
\begin{ex}\label{ex1}
Let $\bH$ be a separable Hilbert space and let $\mathcal{A}$ be the second order differential operator introduced in Example \ref{ex0}. Let $A_q$ be the realization of $\A+\nu I$ on $L^q(\mathcal{O}),$ where $\nu\geq 0$ is chosen such that $A_q\in\BIP^-(\frac{\pi}{2},L^q(\mathcal{O})).$ Fix $q>d$ and $\sigma$ satisfying
\begin{equation}\label{sigmadq}
\frac{d}{2q}<\sigma<\frac{1}{2}.
\end{equation}
Then, if $g\in\M^p\left(0,T;\Lin(\bH,L^q(\mathcal{O}))\right)$ we have
\begin{equation}\label{Aqsigmaeta}
A_q^{-\sigma}g\in\M^p\left(0,T;\gamma(\bH,L^q(\mathcal{O}))\right).
\end{equation}
Therefore, if $\alpha<\frac{1}{2}-\sigma,$ the statements in Lemma \ref{Lem:A3}, Theorem \ref{Th:2.1} and Corollary \ref{Co:2} apply.
\end{ex}
\begin{proof}[Proof of (\ref{Aqsigmaeta})]
From (\ref{sigmadq}) we have in particular $\sigma>1/2q.$ Hence, from \cite[Theorem 1.15.3]{triebel}, we have
\[
D(A_q^\sigma)=[L^q(\mathcal{O}),D(A_q)]_\sigma=H^{2\sigma,q}_0(\mathcal{O})
\]
isomorphically, and also by (\ref{sigmadq}), we have
\[
H^{2\sigma,q}_0(\mathcal{O})\hookrightarrow \mathcal{C}_0(\bar{\mathcal{O}}).
\]
Let $c_{\sigma,q}>0$ be such that $\abs{x}_{\mathcal{C}_0(\bar{\mathcal{O}})}\le c_{\sigma,q}\abs{x}_{H^{2\sigma,q}_0(\mathcal{O})}, \ x\in H^{2\sigma,q}_0(\mathcal{O}).$ Then, for any $y\in \bH$ and $t\in [0,T]$ we have
\begin{align*}
\abs{A^{-\sigma}g(t)y}_{L^\infty(\mathcal{O})}&\le c_{\sigma,q}\abs{A^{-\sigma}g(t)y}_{D(A_q^\sigma)}\\
&\le c_{\sigma,q}\left(1+|\!|A^{-\sigma }|\!|_{\Lin(L^q(\mathcal{O}))}\right)\abs{g(t)y}_{L^q(\mathcal{O})}\\
&\le c_{\sigma,q}\left(1+|\!|A^{-\sigma }|\!|_{\Lin(L^q(\mathcal{O}))}\right)\norm{g(t)}_{\Lin(\bH,L^q(\mathcal{O}))}\abs{y}_{\bH}.
\end{align*}
Hence, by Lemma \ref{gammalp}, there exists $c'>0$ such that
\[
\left|\!\left|A_q^{-\sigma}g(t)\right|\!\right|_{\gamma(\bH,L^q(\mathcal{O}))}\le c'\norm{g(t)}_{\Lin(\bH,L^q(\mathcal{O}))}
\]
and (\ref{Aqsigmaeta}) follows.
\end{proof}
\begin{ex}\label{ex2}
Let $A_q$ be again as above and let $\bH=H^{\theta,2}(\mathcal{O})$ with $\theta>\frac{d}{2}-1.$ By Lemma 6.5 in \cite{brz1}, if $\sigma$ satisfies
\[
\sigma>\frac{d}{4}-\frac{\theta}{2}
\]
then $A_q^{-\sigma}$ extends to a bounded linear operator from $H^{\theta,2}(\mathcal{O})$ to $L^q(\mathcal{O}),$ which we again denote by $A_q^{-\sigma},$ such that
\[
A_q^{-\sigma}\in \gamma\left(H^{\theta,2}(\mathcal{O}),L^q(\mathcal{O})\right).
\]
Hence, by the right-ideal property of $\gamma-$radonifying operators, the statements in Lemma \ref{Lem:A3}, Theorem \ref{Th:2.1} and Corollary \ref{Co:2} hold true with the condition (\ref{sigma-eta}) now replaced by
\[
g\in\M^p\bigl(0,T;\Lin(H^{\theta,2}(\mathcal{O}))\bigr).
\]
\end{ex}
\begin{rem}
Observe in Example \ref{ex1} that, although we require $q>d,$ the choice of $\sigma$ is independent of the Hilbert space $\bH.$ In Example \ref{ex2}, however, the statement holds true for any value of $q$ but the choice of $\sigma$ depends on the Hilbert space $H^{\theta,2}(\mathcal{O}).$
\end{rem}
The stochastic convolution process defined by (\ref{2.19}) is frequently defined as the \emph{mild solution} to the stochastic Cauchy problem
\begin{equation}
\begin{split}
dX(t)+AX(t)&=g(t)\,dW(t),\\
X(0)&=0.
\end{split}
\end{equation}
The following result shows that, under some additional assumptions, the process $v(\cdot)$ is indeed a strict solution: let $D_A(\frac{1}{2},2)$ be the real interpolation space between $D(A)$ and $\bE$ with parameters $(\frac{1}{2},2),$ that is,
\[
{D}_{A}({\textstyle{1\over 2}},2):=\Bigl\{ x\in\bE:|x
|^{2}_{D_{A}(\frac{1}{2} ,2)} = \int^{1}_{0}| AS_tx|_\bE^{2}\,
dt <+\infty \Bigr\}.
\]
\begin{lem}\label{Lem:strong}
Let $\bE$ be a UMD type-2 Banach space and let Assumption \ref{Assum1} be satisfied. Let $\xi\in L^2\left(\Omega, {\mathcal{F}}_0,\Prob;D_A(\frac{1}{2},2)\right)$, $g\in \mathcal{M}^2\left(0,T;\gamma(\bH,D_A(\frac{1}{2},2))\right)$ and $f\in \mathcal{M}^2(0,T;D(A^\zeta))$ for some $\zeta\geq 0.$ Then the following conditions are equivalent,
\begin{enumerate}[(i)]
\item $\displaystyle{X(t)= S_t\xi + \int^{t}_{0}S_{t-r}f(r)\, dr+ \int^{t}_{0}S_{t-r}g(r)\,dW(r)  , \ \Prob-\mbox{\rm  a.s., } \ t\in [0,T].}$

\smallskip\item The process $X(\cdot)$ belongs to $\M^2(0,T;D(A))\cap\mathcal{C}\left(0,T;L^2(\Omega,D_A(\frac{1}{2},2))\right)$ and satisfies
\[
X(t)+ \int^{t}_{0}AX(s)\, ds = \xi+ \int^{t}_{0}f(s) \, ds+\int^{t}_{0}g(s)\,dW(s) , \ \Prob-\mbox{\rm a.s.,  } \ t\in[0,T].
\]
\end{enumerate}
\end{lem}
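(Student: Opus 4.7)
The plan is to establish the two implications separately. The direction (i)$\Rightarrow$(ii) is the substantive maximal regularity statement, while (ii)$\Rightarrow$(i) follows from the semigroup chain rule via an It\^o-type argument.

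For (i)$\Rightarrow$(ii), I would verify the required regularity term by term. For the semigroup part $S_t\xi$, the hypothesis $\xi\in D_A(\frac{1}{2},2)$ gives $\int_0^T|AS_t\xi|_\bE^2\,dt<\infty$ directly from the definition of the interpolation norm, while continuity into $D_A(\frac{1}{2},2)$ follows from strong continuity of $(S_t)$ on that space. For the deterministic convolution $\int_0^t S_{t-r}f(r)\,dr$, the UMD maximal regularity encoded by the Dore--Venni theory (already exploited in Proposition \ref{Prop:2.1}) together with $f\in\M^2(0,T;D(A^\zeta))$ yields membership in $\M^2(0,T;D(A))$. For the stochastic convolution $v(t):=\int_0^t S_{t-r}g(r)\,dW(r)$, the hypothesis $g\in\M^2(0,T;\gamma(\bH,D_A(\frac{1}{2},2)))$ combined with the M-type 2 Burkholder inequality and the right-ideal property of $\gamma$-radonifying operators yields
\begin{equation*}
\Exp\int_0^T|Av(s)|_\bE^2\,ds \;\le\; C\,\Exp\int_0^T\|g(r)\|_{\gamma(\bH,D_A(\frac{1}{2},2))}^2\,dr,
\end{equation*}
so $v\in\M^2(0,T;D(A))$. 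The integral identity in (ii) then follows from a Banach-valued stochastic Fubini theorem combined with the semigroup identity $\int_r^t AS_{s-r}\,ds=I-S_{t-r}$ (valid because $-A$ generates $(S_t)$), which gives
\begin{equation*}
\int_0^t Av(s)\,ds \;=\; \int_0^t\!\!\int_r^t AS_{s-r}g(r)\,ds\,dW(r) \;=\; \int_0^t g(r)\,dW(r)-v(t);
\end{equation*}
adding this to the analogous deterministic identities recovers (ii), and continuity of $X$ in $L^2(\Omega;D_A(\frac{1}{2},2))$ follows by dominated convergence from the same bounds.

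For (ii)$\Rightarrow$(i), I would fix $t\in[0,T]$ and apply an It\^o-type formula to the process $Y(s):=S_{t-s}X(s)$ for $s\in[0,t]$. Since $X(s)\in D(A)$ for a.e.\ $s$ by (ii), the chain rule $\tfrac{d}{ds}S_{t-s}x=AS_{t-s}x$ on $D(A)$ combined with the differential form of (ii) produces the cancellation
\begin{equation*}
dY(s) \;=\; AS_{t-s}X(s)\,ds + S_{t-s}\bigl(-AX(s)+f(s)\bigr)\,ds + S_{t-s}g(s)\,dW(s) \;=\; S_{t-s}f(s)\,ds + S_{t-s}g(s)\,dW(s),
\end{equation*}
and integrating from $0$ to $t$ recovers (i). To make this rigorous in the Banach setting I would first test against $\phi\in D(A^*)$ and regularize via the Yosida approximation $A_\lambda:=A(I+\lambda A)^{-1}$, for which It\^o's formula is classical, then pass $\lambda\to 0$ using closedness of $A$ and the $\M^2$-regularity provided by (ii).

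The principal obstacle I expect is the rigorous justification of the Banach-valued stochastic Fubini exchange in the computation of $\int_0^t Av(s)\,ds$: the kernel $AS_{s-r}$ carries an $(s-r)^{-1/2}$-type singularity when measured from $D_A(\frac{1}{2},2)$ into $\bE$, and one has to verify the joint measurability and double-integrability hypotheses of the Fubini theorem in the M-type 2 (rather than Hilbert) framework. Once this is settled, the remainder is a standard variation on the Da Prato--Zabczyk argument, adapted using Proposition \ref{Prop:2.1}, Theorem \ref{Th:2.1}, and the M-type 2 Burkholder bound already available in the paper.
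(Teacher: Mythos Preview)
The paper does not give a proof: it simply cites Proposition 4.2 in \cite{brz0}. Your sketch is a faithful outline of what that cited proof contains, namely the stochastic maximal regularity estimate for the convolution $v$ together with the standard deterministic Dore--Venni bound and the It\^o/Yosida argument for the converse implication. So there is no methodological divergence to report.

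One point deserves care. The displayed inequality
\[
\Exp\int_0^T|Av(s)|_\bE^2\,ds \;\le\; C\,\Exp\int_0^T\|g(r)\|_{\gamma(\bH,D_A(\frac{1}{2},2))}^2\,dr
\]
is precisely the substantial content of the cited result and does \emph{not} follow immediately from ``Burkholder plus the ideal property'' in the way your phrasing suggests. After Burkholder and Fubini one is left with $\int_0^{T-r}\|AS_u g(r)\|^2_{\gamma(\bH,\bE)}\,du$, and controlling this by $\|g(r)\|^2_{\gamma(\bH,D_A(\frac{1}{2},2))}$ requires passing the $\gamma$-radonifying norm through the square-function defining $D_A(\frac{1}{2},2)$; this is where the argument in \cite{brz0} does real work (via Kahane--Khintchine and a $\gamma$-Fubini step), and it is not a consequence of a pointwise operator bound $\|AS_u\|_{\Lin(D_A(\frac{1}{2},2),\bE)}$ alone. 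Your identification of the stochastic Fubini step as the principal obstacle is therefore apt, but the integrability one needs there is exactly this $\gamma$-square-function estimate rather than a routine kernel bound.
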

\begin{proof}
See Proposition 4.2 in \cite{brz0}.
\end{proof}
\section[Existence of optimal relaxed controls in the weak formulation]{Existence of optimal relaxed controls\\ in the weak formulation}
Let $(B,\abs{\cdot}_B)$ be a Banach space continuously embedded into $\bE$ and let $M$ be a metrisable control set. We are concerned with a control system consisting of a cost functional of the form
\begin{equation}\label{cf}
J(X,u)=\Exp\left[\int_0^T h(s,X(s),u(s))\,ds+\varphi(X(T))\right].
\end{equation}
and a controlled semilinear stochastic evolution equation of the form
\begin{equation}\label{eq0}
\begin{split}
dX(t)+AX(t)\,dt&=F(t,X(t),u(t))\,dt+G(t,X(t))\,dW(t), \ \ \ t\in
[0,T]\\
X(0)&=x_0\in B
\end{split}
\end{equation}
where $W(\cdot)$ is a $\bH-$cylindrical Wiener process, $F:[0,T]\times B\times M\to B$ and, for each $(t,x)\in [0,T]\times B,$ $G(t,x)$ is a, possibly unbounded, linear mapping from $\bH$ into $\bE.$ More precise conditions on the coefficients $F$ and $G$ and on the functions $h$ and $\varphi$ are given below.

We associate the original control system (\ref{cf})-(\ref{eq0}) with a relaxed control system by extending the definitions of the nonlinear drift coefficient $F$ and the running cost function as follows: we define the \emph{relaxed coefficient} $\bar{F}$ through the formula
\begin{equation}\label{barf}
\bar{F}(t,x,\rho):=\int_M F(t,x,u)\,\rho(du), \ \ \ \rho\in\mathcal{P}(M), \ \ \ (t,x)\in [0,T]\times B,
\end{equation}
whenever the above expression is well-defined i.e. the map $M\ni u\mapsto F(t,x,u)\in B$ is Bochner integrable with respect to $\rho\in\mathcal{P}(M).$ Similarly, we define the \emph{relaxed running cost} function $\bar h.$ With these notations, the controlled equation (\ref{eq0}) in the relaxed control setting becomes, formally,
\begin{equation}\label{eq1}
\begin{split}
dX(t)+AX(t)\,dt&=\bar F(t,X(t),q_t)\,dt+G(t,X(t))\,dW(t), \ \ \ t\in
[0,T],\\
X(0)&=x_0\in B,
\end{split}
\end{equation}
where $\{q_t\}_{t\geq 0}$ is a $\mathcal{P}(M)$-valued relaxed control process, and the associated relaxed cost functional is defined as
\[
J(X,q)=\Exp\left[\int_0^T\bar h(s,X(s),q_s)\,ds+\varphi\left(X(T)\right)\right].
\]
We will assume that the realization $A_B$ of the operator $A$ in $B,$
\begin{align*}
D(A_B)&:=\set{x\in D(A)\cap B:Ax\in B }\\
A_B&:=A|_{D(A_B)}
\end{align*}
is such that $-A_B$ generates a $C_0-$semigroup on $B,$ also denoted by $(S_t)_{t\geq 0}.$ We will only consider \emph{mild} solutions to equation (\ref{eq1}) i.e. solutions to the integral equation
\begin{equation}
X(t)=S_tx_0+\int^t_0 S_{t-r} \bar F(r,X(r),q_r)\,dr+\int^t_0 S_{t-r} G(r,X(r))\,dW(r), \ \ t\in [0,T].
\end{equation}
Our aim is to study the existence of optimal controls for the above stochastic relaxed control system in the following weak formulation,
\begin{defi}
Let $x_0\in B$ be fixed. A \emph{weak admissible relaxed control} is a system
\begin{equation}
\pi=\left(\Omega,\Fil,\mathds{P},\mathds{F},\{W(t)\}_{t\geq 0},\{q_t\}_{t\geq 0},\{X(t)\}_{t\geq 0}\right)
\lab{wcontrol-system}
\end{equation}
such that
\begin{enumerate}[{\rm (i)}]
    \item $\left(\Omega,{\Fil},\mathds{P}\right)$ is a complete probability space endowed with the filtration $\mathds{F}=\{\Fil_t\}_{t\geq 0}$

    \smallskip

    \item $\{W(t)\}_{t\ge 0}$ is a $\bH-$cylindrical Wiener process with respect to $\mathds{F}$

    \smallskip

    \item $\{q_t\}_{t\ge 0}$ is a $\mathds{F}-$adapted $\mathcal{P}(M)$-valued relaxed control process

    \smallskip

    \item $\{X(t)\}_{t\geq 0}$ is a $\mathds{F}-$adapted $B-$valued continuous process such that for all $t\in [0,T],$
    \begin{equation}\label{Xeq}
X(t)=S_tx_0+\int^t_0 S_{t-r} \bar F(r,X(r),q_r)\,dr+\int^t_0 S_{t-r} G(r,X(r))\,dW(r), \; \Prob-\text{a.s.}
\end{equation}

    \smallskip

\item The mapping
\[
[0,T]\times \Omega\ni(t,\omega)\mapsto\bar h(t,X(t,\omega),q_t(\omega))\in\R
\]
belongs to $L^1([0,T]\times \Omega;\R)$ and $\varphi(X(T))\in L^1(\Omega;\R).$
\end{enumerate}
The set of weak admissible relaxed controls will be denoted by $\bar{\U}_{\rm ad}^{\rm w}(x_0).$ Under this weak formulation, the relaxed cost functional is defined as
\[
\bar{J}({\pi}):=\Exp^\Prob\left[\int_0^T\bar h(s,X^{\pi}(s),q_s^\pi)\,ds+\varphi\left(X^{\pi}(T)\right)\right], \ \ \ \pi\in\bar{\U}_{\rm ad}^{\rm w}(x_0),
\]
where $X^{\pi}(\cdot)$ is state-process corresponding to the weak admissible relaxed control $\pi.$ The relaxed control problem {\rm \textbf{(RCP)}} is to minimize $\bar{J}$ over $\bar{\U}_{\rm ad}^{\rm w}(x_0)$ for $x_0\in B$ fixed. Namely, we seek $\widetilde{\pi}\in\bar{\U}_{\rm ad}^{\rm w}(x_0)$ such that \[
\bar{J}(\widetilde{\pi})=\inf_{{\pi}\in\bar{\U}_{\rm ad}^{\rm w}(x_0)}\bar J({\pi}).
\]
\end{defi}
The following will be the main assumptions on the Banach space $B$ and the diffusion coefficient $G.$
\begin{Assumption}\label{Assum2}
There exist positive constants $\sigma$ and $\delta$ such that $\sigma+\delta<\frac{1}{2}$ and
\begin{enumerate}
\item $D(A^{\delta})\imbed B$

\item For all $(t,x)\in[0,T]\times B,$ $A^{-\sigma}G(t,x)$ extends to a bounded linear operator that satisfies
\[
A^{-\sigma}G(t,x)\in\gamma(\bH,\bE).
\]
Moreover, the mapping $[0,T]\times B\ni (t,x)\mapsto A^{-\sigma}G(t,x)\in\gamma(\bH,\bE)$ is bounded, continuous with respect to $x\in B$ and measurable with respect to $t\in [0,T].$
\end{enumerate}
\end{Assumption}
To formulate the main hypothesis on the drift coefficient $-A+F$ we need the notion of sub-differential of the norm. Recall that for $x,y \in B$ fixed, the map
\[
\phi :\R \ni s \mapsto \vert x+sy\vert_B\in \R
\]
is convex and therefore is  right and left differentiable. Let $D_{\pm}\vert x\vert  y $ denote the
right/left derivative of $\phi$ at $0$.
Let $B^\ast$ denote the dual of $B$ and let $\seq{\cdot,\cdot}$ denote the duality pairing between $B$ and $B^*.$
\begin{defi}
Let $x \in B.$ The \emph{sub-differential}
$\partial \vert x \vert $ of $\vert x \vert$ is defined as
\[
\partial \vert x \vert_B := \left\{ x^\ast \in B^\ast: D_{-}\vert x \vert y
\le \langle y, x^\ast \rangle \le D_{+}\vert x \vert y, \forall y \in
B\right\}.
\]
\end{defi}
It can be proved, see e.g. \cite{dpz1}, that $\partial \vert x \vert$ is a nonempty, closed and convex set, and
\[
\partial \vert x \vert_B =\{ x^\ast \in B^\ast:
\langle x, x^\ast \rangle =\vert x \vert_B \; \;\mbox{\rm and } \vert x^\ast
\vert_{B^*} \le 1\}.
\]
In particular,
$\partial |0|_B$ is the unit ball in $B^\ast$. The following are the standing assumptions on the drift coefficient, the control set and the running and final cost functions,
\newpage
\begin{Assumption}\label{Assum3}
$\phantom{assumption}$
\begin{enumerate}
\item The control set $M$ is a metrisable \textbf{Suslin} space.

    \smallskip

\item The mapping $F:[0,T]\times B\times M\to B$ is continuous in every variable separately, uniformly with respect to $u\in M.$

    \smallskip

\item There exist
$k_1\in\R,k_2>0, \ m\geq 1$ and a measurable function
\[
\eta:[0,T]\times M\to [0,+\infty]
\]
such that for each $t\in [0,T],$ the mapping $\eta(t,\cdot):M\to[0,+\infty]$ is inf-compact and, for each $x\in D(A)$,  $y\in B$ and $u\in M$ we have
\begin{equation}\label{dar-2.1}
\seq{-A_Bx+F(t,x+y,u),z^*}\le -k_1|x|_B+k_2|y|^m_B+\eta(t,u), \ \ \text{for all} \ z^*\in\partial |x|_B.
\end{equation}

\smallskip
\item The \textbf{running cost} function $h:[0,T]\times B\times M\to (-\infty,+\infty]$ is measurable in $t\in [0,T]$ and lower semi-continuous with respect to $(x,u)\in B\times M.$

    \smallskip

\item There exist constants $C_1\in\R, \ C_2>0$ and
\begin{equation}\label{expcoercive}
\gamma>\frac{2m}{1-2(\delta+\sigma)}
\end{equation}
such that $h$ satisfies the \textbf{coercivity} condition,
\[
C_1+C_2\eta(t,u)^\gamma\le h(t,x,u), \ \ \ (t,x,u)\in [0,T]\times B\times M.
\]
\item The \textbf{final cost} function $\varphi:B\to\R$ is uniformly continuous.
\end{enumerate}
\end{Assumption}
We now state the main result of this paper on existence of weak optimal relaxed controls under the above assumptions.
\begin{theorem}
\label{Th:existence2}
Let $\bE$ be a separable UMD type-2 Banach space and let $A+\nu I$ satisfy Assumption \ref{Assum1} for some $\nu\geq 0.$ Suppose that $(A+\nu I)^{-1}$ is compact and that Assumptions \ref{Assum2} and \ref{Assum3} also hold. Let $x_0\in B$ be such that
\[
\inf_{{\pi}\in\bar{\U}_{\rm ad}^{\rm w}(x_0)}\bar J({\pi})<+\infty.
\]
Then \textbf{\emph{(RCP)}} admits a weak optimal relaxed control.
\end{theorem}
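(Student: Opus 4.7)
I would run the standard compactness-plus-Skorokhod-representation scheme adapted to Young-measure-valued controls. Let $\{\pi_n\}_{n\ge 1}\subset\bar{\U}_{\rm ad}^{\rm w}(x_0)$ be a minimizing sequence, $\bar J(\pi_n)\downarrow\inf\bar J$, and write $\pi_n=(\Omega^n,\Fil^n,\Prob^n,\mathds{F}^n,W^n,q^n,X^n)$ together with the random Young measure $\lambda_n(du,dt)=q^n_t(du)\,dt$. The coercivity condition in Assumption \ref{Assum3}(5) and $\sup_n \bar J(\pi_n)<+\infty$ give
\[
\sup_n\Exp^{\Prob^n}\!\int_0^T\!\!\int_M \eta(t,u)^\gamma\,\lambda_n(du,dt)<+\infty,
\]
and since $\eta(t,\cdot)$ is inf-compact and $\gamma\ge 1$, Lemma \ref{tightrym} yields tightness of the laws of $\lambda_n$ on $\Y(0,T;M)$ with the stable topology, which is metrisable Suslin by Proposition \ref{stsuslinmet}.

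\textbf{A priori estimates and path-space tightness.} Split $X^n=Y^n+v^n$, where $v^n(t)=\int_0^t S_{t-r}G(r,X^n(r))\,dW^n(r)$ and $Y^n$ satisfies the random evolution equation $dY^n/dt+A Y^n=\bar F(t,Y^n+v^n,q^n_t)$ with $Y^n(0)=x_0$. By Assumption \ref{Assum2} and the factorization formula of Theorem \ref{Th:2.1}, one writes $v^n=\Lambda_T^{-\alpha}z^n$ with $\|z^n\|_{L^p(0,T;\bE)}$ bounded uniformly via Lemma \ref{Lem:A3}; Corollary \ref{C:comp}, whose compactness conclusion is exactly what the compact-resolvent hypothesis $(A+\nu I)^{-1}\in\mathcal{K}(\bE)$ is there to provide, gives tightness of the laws of $v^n$ in $\mathcal{C}([0,T];D(A^\delta))\hookrightarrow \mathcal{C}([0,T];B)$. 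Applying the dissipative inequality (\ref{dar-2.1}) pathwise to $Y^n(t)$ with $y=v^n(t)$ against any $z^*\in\partial|Y^n(t)|_B$ controls $|Y^n|_B$ in $L^\infty(0,T)$ in terms of $\sup_t|v^n(t)|_B^m$ and $\int\!\int\eta(t,u)\,\lambda_n(du,dt)$, both uniformly integrable by the previous step. A further equicontinuity argument for the deterministic convolution defining $Y^n$ delivers tightness of the laws of $X^n$ in $\mathcal{C}([0,T];B)$.

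\textbf{Skorokhod representation and passage to the limit.} Consider the joint laws of $(X^n,\lambda_n,W^n,M^n)$ with $M^n(t)=\int_0^t G(r,X^n(r))\,dW^n(r)$, all tight on the respective Polish or Suslin metrisable spaces. Jakubowski's extension of the Skorokhod representation theorem (valid on Suslin spaces) supplies a new space $(\tilde\Omega,\tilde\Fil,\tilde\Prob)$ carrying $(\tilde X^n,\tilde\lambda_n,\tilde W^n,\tilde M^n)$ with identical joint laws, converging $\tilde\Prob$-a.s. to $(\tilde X,\tilde\lambda,\tilde W,\tilde M)$. Lemma \ref{Lem:disrym} disintegrates $\tilde\lambda$ as $\tilde q_t(du)\,dt$. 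The Fiber Product Lemma \ref{Lem:fibpro} together with Theorem \ref{Lem:contym} and the separate continuity of $F$ then allow passage to the limit in
\[
\int_0^t S_{t-r}\!\int_M F(r,\tilde X^n(r),u)\,\tilde q^n_r(du)\,dr,
\]
with tails in $u$ controlled uniformly via the coercive $\eta$. The limit $\tilde M$ is a continuous $\bE$-valued square-integrable martingale whose cylindrical quadratic variation equals $\int_0^\cdot G(r,\tilde X(r))\circ G(r,\tilde X(r))^*\,dr$, so Theorem \ref{martrep} produces, on a further enlargement, a cylindrical Wiener process $\widehat W$ with $\tilde M(t)=\int_0^t G(r,\tilde X(r))\,d\widehat W(r)$. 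Hence $\tilde\pi:=(\tilde\Omega,\tilde\Fil,\tilde\Prob,\tilde{\mathds{F}},\widehat W,\tilde q,\tilde X)\in\bar{\U}_{\rm ad}^{\rm w}(x_0)$.

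\textbf{Lower semi-continuity and main obstacle.} For the cost, Lemma \ref{Lem:lscym} applied to the nonnegative lower-semicontinuous function $(t,x,u)\mapsto h(t,x,u)-C_1$ on the fibred Young measures $\underline{\delta}_{\tilde X^n}\otimes\tilde\lambda_n\to \underline{\delta}_{\tilde X}\otimes\tilde\lambda$ (again via Lemma \ref{Lem:fibpro}), combined with uniform continuity of $\varphi$ and Fatou's lemma, yields $\bar J(\tilde\pi)\le\liminf_n \bar J(\pi_n)=\inf_{\bar{\U}_{\rm ad}^{\rm w}(x_0)}\bar J$, so $\tilde\pi$ is optimal. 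The delicate step I expect to be the main obstacle is this passage to the limit in the nonlinear drift: since $F$ is only separately continuous and grows with $\eta(t,u)$, one must couple the strong a.s. convergence of $\tilde X^n$ in $\mathcal{C}([0,T];B)$ with only the stable convergence of $\tilde\lambda_n$ and truncate uniformly in $u$ using the inf-compactness of $\eta$ — this is precisely where the Fiber Product Lemma plus the coercivity-driven flexible tightness is indispensable, and it is what justifies the choice of Suslin metrisable $M$ rather than Polish $M$.
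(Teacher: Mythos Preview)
Your overall scheme is the right one and mirrors the paper's, but there is a genuine gap in the martingale step. You define $M^n(t)=\int_0^t G(r,X^n(r))\,dW^n(r)$ and later claim the limit has cylindrical quadratic variation $\int_0^\cdot G(r,\tilde X(r))\circ G(r,\tilde X(r))^*\,dr$. Under Assumption~\ref{Assum2}, however, $G(t,x)$ itself is \emph{not} assumed to be $\gamma$-radonifying; only $A^{-\sigma}G(t,x)\in\gamma(\bH,\bE)$. In the M-type~2 stochastic integration theory used here, the integrand must lie in $\M^2(0,T;\gamma(\bH,\bE))$, so your $M^n$ is in general not a well-defined $\bE$-valued process, and the stated quadratic variation makes no sense. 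The stochastic \emph{convolution} $v^n$ exists because the semigroup provides the missing smoothing, but the plain It\^o integral does not.

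The paper circumvents this by never forming $\int G\,dW$ directly. It defines the martingale via $A^{-1}$: one shows, using Lemma~\ref{Lem:strong} and the fact that $A^{-1}G=A^{-(1-\sigma)}A^{-\sigma}G$ maps into $\gamma(\bH,D_A(\tfrac12,2))$, that
\[
\tilde M_n(t):=A^{-1}\tilde X_n(t)+\int_0^t\tilde X_n(r)\,dr-A^{-1}x_0-\int_0^t A^{-1}\bar F(r,\tilde X_n(r),\tq^n_r)\,dr
\]
is an $\bE$-valued martingale with quadratic variation $\int_0^\cdot [A^{-1}G]\circ[A^{-1}G]^*\,dr$, and that this equals $A^{-1}\tilde v_n(t)+\int_0^t\tilde v_n$. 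This is why the paper applies Skorokhod to $(X_n,z_n,v_n,\lambda_n)$ rather than to $(X_n,\lambda_n,W_n,M_n)$: carrying $v_n$ along gives a.s.\ convergence of $\tilde M_n$ for free, and avoids both the ill-defined $M^n$ and the awkwardness of putting a cylindrical Wiener process into a Skorokhod space. After the martingale representation theorem yields $\tilde M(t)=\int_0^t A^{-1}G(r,\tilde X(r))\,d\widehat W(r)$, one applies $A$ (via Lemma~\ref{Lem:strong} in the reverse direction) to recover the mild equation for $\tilde X$.

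Two smaller points. First, your ``equicontinuity argument'' for tightness of $Y^n$ is replaced in the paper by the compactness of $\Lambda_T^{-1}$ on $L^p(0,T;\bE)$ together with a uniform $L^p$ bound on $\bar F(\cdot,X^n,q^n)$; this is cleaner and uses the compact-resolvent hypothesis in the same way as for $v^n$. Second, for the drift limit you cannot invoke Theorem~\ref{Lem:contym} directly since $F$ is unbounded in $u$; the paper instead proves \emph{weak} $L^2([0,T]\times\tilde\Omega;\bE)$ convergence of $\bar F(\cdot,\tilde X_n,\tq^n)$ to $\bar F(\cdot,\tilde X,\tq)$ via a truncation on $\{\eta^{\gamma-2}>C_\varepsilon\}$, and then upgrades this to identification of the limit equation through Mazur's theorem. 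Your truncation intuition is correct, but the weak-$L^2$/Mazur route is what makes it go through.
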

The proof of Theorem \ref{Th:existence2} is postponed until Section \ref{proofmain}. We illustrate the above result with the two following examples.
\begin{ex}[Optimal relaxed control of stochastic PDEs of reaction-diffusion type with multiplicative noise]
Let $M$ be a Suslin metrisable control set and let $\mathcal{O}\subset\R^d$ be a bounded domain with $\mathcal{C}^\infty$ boundary. Let
\[
f:[0,T]\times \mathcal{O}\times\R\times M\to \R
\]
be measurable in $t,$ continuous in $u$ and continuous in $(\xi,x)\in\mathcal{O}\times\R$ uniformly with respect to $u.$ Assume further that $f$ satisfies
\begin{equation}\label{pgc0}
f(t,\xi,x+y,u)\sgn x\le -k_1\abs{x}+k_2\abs{y}^m+\eta(t,u), \ \ \ (t,\xi)\in[0,T]\times\mathcal{O}, \ \ x,y\in\R, \ u\in M
\end{equation}
for some constants $m\geq 1$ and $k_1,k_2>0$ and some measurable function
\[
\eta:[0,T]\times M\to [0,+\infty]
\]
such that $\eta(t,\cdot)$ is inf-compact for all $t\in[0,T].$ Let $\mathcal{A}$ be the second-order differential operator
\[
(\mathcal{A}x)(\xi):=-\sum_{i,j=1}^d a_{ij}(\xi)\frac{\partial^2 x}{\partial \xi_i\partial \xi_j}+\sum_{i=1}^d b_{i}(\xi)\frac{\partial x}{\partial \xi_i} + c(\xi)x(\xi), \ \ \ \xi\in\mathcal{O},
\]
with $a_{ij}=a_{ji},$ $\sum_{i,j=1}^d a_{ij}(\xi)\lambda_i\lambda_j\geq C\abs{\lambda}^2, \forall\lambda\in\R^d,$ and $c,b_i,a_{ij}\in\mathcal{C}^\infty(\bar{\mathcal{O}}).$ Finally, let
\[
g:[0,T]\times \mathcal{O}\times \R \to\R
\]
be bounded, measurable in $t$ and continuous in $(\xi,x)\in\mathcal{O}\times \R,$ and consider the following controlled stochastic PDE on $[0,T]\times\mathcal{O},$
\begin{align}
\frac{\partial X}{\partial t}(t,\xi)+(\mathcal{A}X)(t,\xi)&=f(t,\xi,X(t,\xi),u(t))+g(t,\xi,X(t,\xi))\,\frac{\partial w}{\partial t}(t,\xi), \ \ \text{ on} \ [0,T]\times\mathcal{O}\notag\\
X(t,\xi)&=0, \ \ \ \ \ \ \ \ \ \ \ \ \ t\in (0,T], \ \xi\in\partial \mathcal{O}\label{spde1}\\
X(0,\cdot)&=x_0(\xi), \ \ \ \ \ \ \xi\in\mathcal{O}\notag
\end{align}
where $w(\cdot)$ is a nondegenerate noise with Cameron-Martin space
\[
\bH:=
\left\{
  \begin{array}{ll}
    L^2(\mathcal{O}), & \ \hbox{if} \ d=1, \\
    H^{\theta,2}(\mathcal{O}) \ \hbox{with } \ \theta\in\left(\frac{d-1}{2},\frac{d}{2}\right), & \ \hbox{if} \ d\geq 2.
  \end{array}
\right.
\]
In concrete situations, the quantity $X(t,\xi)$ may represent the concentration, density or temperature of a certain substance and, as mentioned in the introduction, we want to study a running cost function that permits to regulate this quantity at some fixed points $\zeta_1,\ldots,\zeta_n\in\mathcal{O}.$ Therefore, we need the trajectories of the state process to take values in the space of continuous functions on the domain $\mathcal{O}.$ In view of this, and the zero-boundary condition in (\ref{spde1}), we take $B=\mathcal{C}_0(\bar{\mathcal{O}})$ as state space.

Let $\phi:[0,T]\times\mathcal{O}\times \R\times M\to \R_+$ be measurable and lower semi-continuous with respect to $x$ and $u.$ We will consider the running cost function defined by
\begin{equation}\label{rch0}
h(t,x,u):=\sum_{i=1}^n \phi(t,\zeta_i,x(\zeta_i),u)+\eta(t,u)^\gamma, \ \ \ t\in[0,T], \ x\in\mathcal{C}_0(\bar{\mathcal{O}}), \ u\in M
\end{equation}
where $\zeta_1,\ldots,\zeta_n\in\mathcal{O}$ are fixed and $\gamma\geq 1$ is to be chosen below.
\end{ex}
\begin{theorem}
Let the constants $q,\sigma$ and $\delta$ satisfy the following conditions
\begin{enumerate}
\item If $d=1,$
\[\
q>2, \ \ \ \frac{1}{4}<\sigma<\frac{1}{2}-\frac{1}{2q} \ \ \ \hbox{and} \ \ \ \frac{1}{2q}<\delta<\frac{1}{2}-\sigma.
\]
\item If $d\geq 2,$
\[
2d<q\le\frac{2d}{d-2\theta}, \ \ \  \frac{d}{q}<\sigma<\frac{1}{4}, \ \ \ \hbox{and} \ \ \ \frac{d}{q}<\delta<\frac{1}{4}.
\]
\end{enumerate}
Assume also that $\gamma$ satisfies condition (\ref{expcoercive}). Then, if there exists $x_0\in \mathcal{C}_0(\bar{\mathcal{O}})$ such that
\[
\inf_{{\pi}\in\bar{\U}_{\rm ad}^{\rm w}(x_0)}\bar J({\pi})<+\infty,
\]
the \textbf{\emph{(RCP)}} associated with (\ref{spde1}) and the cost function (\ref{rch0}) admits a weak optimal relaxed control.
\end{theorem}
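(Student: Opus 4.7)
The plan is to verify the hypotheses of Theorem \ref{Th:existence2} for the concrete choices $\bE := L^q(\mathcal{O})$, $B := \mathcal{C}_0(\bar{\mathcal{O}})$, and $A$ the $L^q$-realization of $\mathcal{A}$ defined in \rf{Aqdef}, possibly shifted by $\nu I$ as in Remark \ref{nu0}. For $q \in [2,\infty)$ the space $L^q(\mathcal{O})$ is separable, UMD (Example \ref{exUMD}) and of type $2$; Example \ref{ex0} supplies $\nu \geq 0$ with $A_q + \nu I \in \BIP^-(\pi/2, L^q(\mathcal{O}))$; and the Rellich--Kondrachov compact embedding $H^{2,q}(\mathcal{O}) \cap H^{1,q}_0(\mathcal{O}) \hookrightarrow L^q(\mathcal{O})$ yields compactness of $(A_q+\nu I)^{-1}$. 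The Dirichlet realization of $\mathcal{A}$ generates an analytic $C_0$-semigroup on $\mathcal{C}_0(\bar{\mathcal{O}})$ under our smoothness hypotheses on the coefficients, so $-A_B$ indeed generates a $C_0$-semigroup on $B$, while $B \hookrightarrow \bE$ holds since $\mathcal{O}$ is bounded.

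For Assumption \ref{Assum2} I use the identification $D(A_q^\delta) = H^{2\delta,q}_0(\mathcal{O})$ by complex interpolation (as in the proof of Example \ref{ex1}), combined with the Sobolev embedding $H^{2\delta,q}(\mathcal{O}) \hookrightarrow \mathcal{C}_0(\bar{\mathcal{O}})$ valid whenever $2\delta > d/q$; the stated ranges of $\delta$ in both dimensions cover this. The noise coefficient $G(t,x) \in \Lin(\bH, L^q(\mathcal{O}))$ is the multiplication operator $[G(t,x)h](\xi) := g(t,\xi,x(\xi))\,h(\xi)$, which is uniformly bounded (since $g$ is bounded), continuous in $x \in B$, measurable in $t$, and well-defined into $L^q(\mathcal{O})$ via the Sobolev embedding of $\bH$ into $L^q(\mathcal{O})$ (the upper bound $q \leq 2d/(d-2\theta)$ for $d \geq 2$ is exactly what this requires). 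The $\gamma$-radonifying requirement $A^{-\sigma} G(t,x) \in \gamma(\bH, L^q(\mathcal{O}))$ then follows from Example \ref{ex1} when $d = 1$ (with $\bH = L^2(\mathcal{O})$, so that $\sigma > 1/(2q)$ is implied by $\sigma > 1/4$) and from Example \ref{ex2} when $d \geq 2$ (with $\theta \in ((d-1)/2, d/2)$ chosen so that $d/4 - \theta/2 < 1/4$), in each case using the right-ideal property of $\gamma$-radonifying operators to factor out multiplication by the bounded function $g$. The constraint $\sigma + \delta < 1/2$ is built into the stated parameter ranges.

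The main obstacle is the dissipative estimate \rf{dar-2.1} on the sup-norm space $B = \mathcal{C}_0(\bar{\mathcal{O}})$. For $x \in D(A_B)$ with $|x|_B > 0$, the zero Dirichlet condition forces any maximizer $\xi_0$ of $|x|$ to lie in the interior of $\mathcal{O}$; then $z^* := \sgn(x(\xi_0))\,\delta_{\xi_0}$ belongs to $\partial |x|_B$, since any element of the sub-differential of the sup-norm is a signed Radon measure whose total variation is a probability measure concentrated on the level set $\{\,\xi : |x(\xi)| = |x|_B\,\}$. At such an interior extremum of $|x|$ the gradient of $x$ vanishes and the Hessian has the appropriate sign, so the ellipticity of $\mathcal{A}$ gives $\sgn(x(\xi_0))(\mathcal{A}x)(\xi_0) \geq -|c|_\infty\,|x|_B$, and the resulting zeroth-order error is absorbed by the shift $\nu I$ permitted by Remark \ref{nu0}. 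Combining this with the pointwise inequality \rf{pgc0} evaluated at $\xi_0$ yields \rf{dar-2.1} with the same constants $k_1, k_2, m$ and rate $\eta$ as in the hypothesis on $f$. The remaining items of Assumption \ref{Assum3} are straightforward: $F(t,x,u)(\xi) := f(t,\xi,x(\xi),u)$ inherits the required separate continuity/measurability from the hypotheses on $f$; the running cost \rf{rch0} is lower semi-continuous by lower semi-continuity of $\phi$ and continuity of the evaluation functionals $\mathcal{C}_0(\bar{\mathcal{O}}) \ni x \mapsto x(\zeta_i)$; and coercivity holds with $C_1 = 0$, $C_2 = 1$ thanks to the explicit $\eta(t,u)^\gamma$ summand, where the assumption \rf{expcoercive} on $\gamma$ is precisely the one required. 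An application of Theorem \ref{Th:existence2} then delivers the desired weak optimal relaxed control.
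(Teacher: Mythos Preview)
Your overall strategy---reduce to Theorem \ref{Th:existence2} by verifying Assumptions \ref{Assum1}--\ref{Assum3} with $\bE=L^q(\mathcal{O})$, $B=\mathcal{C}_0(\bar{\mathcal{O}})$---is exactly the paper's, and most of the verification is fine. But you have interchanged the roles of Examples \ref{ex1} and \ref{ex2} in checking Assumption \ref{Assum2}--(2), and as written neither invocation goes through.

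For $d=1$ you appeal to Example \ref{ex1}, which requires $G(t,x)\in\Lin(\bH,L^q(\mathcal{O}))$. With $\bH=L^2(\mathcal{O})$ and $q>2$, multiplication by a bounded function maps $L^2(\mathcal{O})$ to $L^2(\mathcal{O})$, not to $L^q(\mathcal{O})$; so the hypothesis of Example \ref{ex1} is not met and the condition $\sigma>1/(2q)$ you extract is irrelevant. The paper instead uses Example \ref{ex2} (with $\theta=0$): since $G(t,x)\in\Lin(L^2(\mathcal{O}))$ and $A_q^{-\sigma}\in\gamma(L^2(\mathcal{O}),L^q(\mathcal{O}))$ for $\sigma>1/4=d/4-\theta/2$, the right-ideal property gives $A_q^{-\sigma}G(t,x)\in\gamma(L^2(\mathcal{O}),L^q(\mathcal{O}))$. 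This is precisely why the theorem imposes $\sigma>1/4$ rather than merely $\sigma>1/(2q)$.

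For $d\ge 2$ you appeal to Example \ref{ex2} together with the right-ideal property. That would need $G(t,x)\in\Lin(H^{\theta,2}(\mathcal{O}))$, i.e.\ multiplication by $g(t,\cdot,x(\cdot))$ bounded on $H^{\theta,2}$; but $g$ is only continuous in $(\xi,x)$, so this fails. The paper uses Example \ref{ex1} here: the upper bound $q\le 2d/(d-2\theta)$ gives the Sobolev embedding $H^{\theta,2}(\mathcal{O})\hookrightarrow L^q(\mathcal{O})$, hence $G(t,x)\in\Lin(H^{\theta,2}(\mathcal{O}),L^q(\mathcal{O}))$, and then Example \ref{ex1} (valid since $q>2d>d$ and $\sigma>d/q>d/(2q)$) yields $A_q^{-\sigma}G(t,x)\in\gamma(H^{\theta,2}(\mathcal{O}),L^q(\mathcal{O}))$.

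Swap the two examples and your argument becomes correct and essentially identical to the paper's proof.
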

\begin{proof}
Let $\bE=L^q(\mathcal{O})$ and let $A_q$ denote the realization of $\mathcal{A}$ in $L^q(\mathcal{O}).$ Then $A_q+\nu I$ satisfies Assumption \ref{Assum1} for some $\nu\geq 0$ (see Example \ref{ex0}). Let us define the Nemytskii operator $F:[0,T]\times B\times M\to B$ by
\[
F(t,x,u)(\xi):=f(t,\xi,x(\xi),u), \ \ \ \xi\in\mathcal{O}, \ \ (t,x,u)\in [0,T]\times B\times M.
\]
Let $x\in B$ and let $z^*\in\partial\abs{x}_B.$ Then
\begin{equation}\label{zstar}
z^*=
\left\{
  \begin{array}{ll}
    \delta_{\xi_0}, & \ \hbox{if} \ x(\xi_0)=\abs{x}_B \\
    -\delta_{\xi_0}, & \ \hbox{if} \ x(\xi_0)=-\abs{x}_B
  \end{array}
\right.
\end{equation}
for some $\xi_0\in\mathcal{O}$ (see e.g. \cite{dpz5}) and by condition (\ref{pgc0}), for each $y\in B$ and $u\in M$ we have
\begin{align*}
\seq{F(t,x+y,u),z^*}&=f\left(t,\xi_0,x(\xi_0)+y(\xi_0),u)\right)\sgn x(\xi_0)\\
&\le -k_1\abs{x(\xi_0)}+k_2\abs{y(\xi_0)}^m+\eta(t,u)\\
&\le -k_1\abs{x}_{B}+k_2\abs{y}_{B}^m+\eta(t,u).
\end{align*}
Moreover, we can find $k_0\in\R$ such that the realization of $-\mathcal{A}+k_0I$ in $B$ is dissipative, i.e.
\[
\seq{(-\mathcal{A}+k_0I)x,x^*}\le 0, \ \ x^*\in\partial\abs{x}_B, \ \ \ x\in B.
\]
Hence, Assumption \ref{Assum3} is satisfied. We now check that Assumption \ref{Assum2}--(2) also holds. Observe that by writing $-A_q+F=-(A_q+\nu I)+F+\nu I,$ we can assume without loss of generality that $\nu=0.$ Let us define the multiplication operator
\[
\left(G(t,x)y\right)(\xi):=g(t,\xi,x(\xi))y(\xi), \ \ \ \xi\in \mathcal{O}, \ \ y\in \bH, \ \ x\in \mathcal{C}_0(\bar{\mathcal{O}}), \ \ t\in [0,T].
\]
We consider first the case $d=1.$ Since $g$ is bounded, for each $(t,x)\in [0,T]\times\mathcal{C}_0(\bar{\mathcal{O}})$ the map $\mathcal{O}\ni\xi\mapsto g(t,\xi,x(\xi))\in\R$ belongs to $L^\infty(\mathcal{O}).$ Therefore, by H\"{o}lder's inequality the map $G(t,x)$ is a bounded linear operator in $\bH=L^2(\mathcal{O})$ and its operator norm is uniformly bounded from above by some constant independent of $t$ and $x.$
Moreover, as recalled in Example \ref{ex2}, the map $A_q^{-\sigma}$ extends to a bounded linear operator from $L^2(\mathcal{O})$ to $L^q(\mathcal{O}),$ also denoted by $A_q^{-\sigma},$ such that
\[
A_q^{-\sigma}\in \gamma\left(L^2(\mathcal{O}),L^q(\mathcal{O})\right).
\]
Then, by the right-ideal property of the $\gamma-$radonifying operators, Assumption \ref{Assum2}--(2) is satisfied.

In the case $d\geq 2,$ the choice of the constants $\theta$ and $q$ and the Sobolev Embedding Theorem imply that $\bH=H^{\theta,2}(\mathcal{O})\hookrightarrow L^q(\mathcal{O}).$ This combined again with H\"{o}lder's inequality implies that $G(t,x)$ is a bounded linear operator from $H^{\theta,2}(\mathcal{O})$ into $L^q(\mathcal{O}),$ with operator norm again uniformly bounded from above by some constant independent of $t$ and $x.$ Since $\sigma>d/{2q},$ by the same argument used in Example \ref{ex1} it follows that
\[
A_q^{-\sigma}G(t,x)\in\gamma\bigl(H^{\theta,2}(\mathcal{O}),L^q(\mathcal{O})\bigr)
\]
and that Assumption \ref{Assum2}--(2) holds. Since in both cases $(d=1$ and $d\geq 2)$ we have $\delta>d/2q,$ as seen in Example \ref{ex1}, we have
 \[
D(A_q^\delta)=[L^q(\mathcal{O}),D(A_q)]_\delta=H^{2\delta,q}_0(\mathcal{O})\hookrightarrow\mathcal{C}_0(\bar{\mathcal{O}})
\]
and, therefore, Assumption \ref{Assum2}--(1) is satisfied too. Moreover, the last embedding is compact, which in turn implies that the embedding $D(A_q+\nu I)=D(A_q)\hookrightarrow L^q(\mathcal{O})$ is also compact, and the desired result follows from Theorem \ref{Th:existence2}.
\end{proof}
\begin{rem}
Existence of weak optimal feedback controls for a similar cost functional and a similar class of dissipative stochastic PDEs has been recently proved in \cite{masiero5} and \cite{masiero4} using Backward SDEs and the associated Hamilton-Jacobi-Bellman equation. However, only the case of additive noise is considered and the nonlinear term is assumed to be bounded with respect to the control variable.
\end{rem}
\begin{ex}
The first example can be modified to allow the control process to be space-dependant. For instance,
consider the controlled stochastic PDE on $[0,T]\times\mathcal{O},$
\begin{align}
\frac{\partial X}{\partial t}(t,\xi)+(\mathcal{A}X)(t,\xi)&=f(t,\xi,X(t,\xi),u(t,\xi))+g(t,\xi,X(t,\xi))\,\frac{\partial w}{\partial t}(t,\xi), \ \ \text{ on} \ [0,T]\times\mathcal{O}\notag\\
X(t,\xi)&=0, \ \ \ \ \ \ \ \ \ \ \ \ \ t\in (0,T], \ \xi\in\partial \mathcal{O}\label{spde2}\\
X(0,\cdot)&=x_0(\xi), \ \ \ \ \ \ \xi\in\mathcal{O}\notag
\end{align}
where
\[
f:[0,T]\times \mathcal{O}\times\R\times\R\to \R
\]
satisfies
\begin{equation}\label{pgc1}
f(t,\xi,x+y,u)\sgn x\le -k_1\abs{x}+k_2\abs{y}^m+a(t,\abs{u}), \ \ \ (t,\xi)\in[0,T]\times\mathcal{O}, \ \ x,y,u\in\R
\end{equation}
where $a:[0,T]\times \R_+\to\R_+$ is a measurable function such that $a(t,\cdot)$ is strictly increasing for each $t\in [0,T].$ Assume further that $f$ is measurable in $t,$ separately continuous in $(\xi,u)\in\mathcal{O}\times \R$ and continuous in $x\in\R$ uniformly with respect to $\xi$ and $u.$

We take $M=\mathcal{C}(\bar{\mathcal{O}})$ as control set and fix $k,r>0$ such that $kr>d,$ in which case the embedding $H^{k,r}(\mathcal{O})\hookrightarrow\mathcal{C}(\bar{\mathcal{O}})$ is compact. Hence, as seen in Example \ref{infcompactex}, the mapping
\[
\eta:[0,T]\times M\to[0,+\infty]
\]
defined as
\[
\eta(t,u):=
\left\{
  \begin{array}{ll}
    a\bigl(t,c\abs{u}_{H^{k,r}(\mathcal{O})}\bigr), & \hbox{if} \ \ u\in H^{k,r}(\mathcal{O})\\
    +\infty, & \hbox{else}
  \end{array}
\right.
\]
satisfies
\[
\eta(t,\cdot) \ \ \text{ is inf-compact, for each } t\in [0,T].
\]
The constant $c>0$ in the definition of $\eta$ is such that $\abs{u}_{\mathcal{C}(\bar{\mathcal{O}})}\le c\abs{u}_{H^{k,r}(\mathcal{O})}, \ u \in H^{k,r}(\mathcal{O}).$
Finally, let
\[
\phi:[0,T]\times\mathcal{O}\times \R\times \R\to \R_+
\]
be measurable and lower semi-continuous with respect to $x,u\in\R,$ and define the running cost function
\begin{equation}\label{rch1}
h(t,x,u):=\sum_{i=1}^n \phi(t,\zeta_i,x(\zeta_i),u(\zeta_i))+\eta(t,u)^\gamma, \ \ \ t\in[0,T], \ x\in\mathcal{C}_0(\bar{\mathcal{O}}), \ u\in \mathcal{C}(\bar{\mathcal{O}})
\end{equation}
where $\zeta_1,\ldots,\zeta_n\in\mathcal{O}$ are fixed and $\gamma\geq 1$ is chosen to satisfy condition (\ref{expcoercive}).
The Nemytskii operator $F:[0,T]\times B\times M\to B$ is now defined as
\[
F(t,x,u)(\xi):=f(t,\xi,x(\xi),u(\xi)), \ \ \ \xi\in\mathcal{O}, \ \ (t,x,u)\in [0,T]\times B\times M.
\]
We see that Assumption \ref{Assum3} is again satisfied since for each $(t,x)\in [0,T]\times B$ and $z^*\in\partial\abs{x}_B$ as in (\ref{zstar}),
by condition (\ref{pgc1}), for all $y\in B$ and $u\in M$ we have
\begin{align*}
\seq{F(t,x+y,u),z^*}&=f\left(t,\xi_0,x(\xi_0)+y(\xi_0),u(\xi_0)\right)\sgn x(\xi_0)\\
&\le -k_1\abs{x(\xi_0)}+k_2\abs{y(\xi_0)}^m+a(t,\abs{u(\xi_0)})\\
&\le -k_1\abs{x}_B+k_2\abs{y}_B^m+\eta(t,u).
\end{align*}
\end{ex}

\section{Proof of the main Theorem}\label{proofmain}
We start by noticing that if $F$ satisfies the dissipative-type condition (\ref{dar-2.1}), since $\partial |0|_B$ coincides with the unit ball in $B^\ast,$ by the Hahn-Banach Theorem we have the following estimate
\begin{equation}\label{dar-2.3}
|F(t,y,u)|_B\le k_2|y|^m_B+\eta(t,u), \ \ \  \;t\in [0,T], \ \ y \in B, \ \ u\in M.
\end{equation}
For the proof of Theorem \ref{Th:existence2} we will also need the following important consequence of (\ref{dar-2.1}) in order to obtain a-priori estimates for weak admissible relaxed controls. Observe that the UMD property of the underlying Banach space and the bounded imaginary powers condition on $A$ turn out to be crucial for the proof of this Lemma.
\begin{lem}\label{Lem:a'priori}
Suppose that $F:[0,T]\times B\times M\to B$ satisfies Assumption \ref{Assum3}--(2) and that there exists a UMD Banach space $Y$ continuously embedded in $B$ such that the part $A_Y$ of the operator $A_B$ in $Y$ satisfies $A_Y\in\BIP^-(\frac{\pi}{2},Y).$ Suppose that a function $z\in\mathcal{C}([0,T];B)$ satisfies
\[
z(t)=\int^t_0 S_{t-r}\bar F(r,z(r)+v(r),q_r)\,dr, \ \ \ \ t\in [0,T].
\]
for some $\mathcal{P}(M)-$valued relaxed control $\set{q_t}_{t\geq 0}$ with
\[
\int_0^T\!\int_M\eta(t,u)^\gamma\,q_t(du)\,dt<\infty
\]
and some $v\in L^\gamma(0,T;B)$ with $\gamma>1.$ Then, $z$ satisfies the following estimate
\begin{equation}
|z(t)|_B\le \int^t_0e^{-k_1(t-s)}\Bigl[k_2|v(s)|^m_B+\int_M \eta(s,u)\,q_s(du)\Bigr]ds, \ \ \ \ t\in [0,T].
\label{eq:a'priori}
\end{equation}
\end{lem}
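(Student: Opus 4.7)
The plan is to apply the dissipative condition (\ref{dar-2.1}) to a Yosida-regularisation $z_n$ of $z$, derive a sub-differential inequality for $|z_n(\cdot)|_B$, convert it into a linear differential inequality to which a Gronwall-type argument applies, and finally pass to the limit $n\to\infty$.

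First I would set $G(r):=\bar F(r,z(r)+v(r),q_r)$ and use the Hahn--Banach consequence (\ref{dar-2.3}) together with $z\in\mathcal{C}([0,T];B)$, $v\in L^\gamma(0,T;B)$ and the integrability hypothesis on $\int_M\eta(\cdot,u)\,q_\cdot(du)$ to conclude that $G\in L^p(0,T;B)$ for some $p>1$. Next introduce the Yosida resolvents $R_n:=n(nI+A)^{-1}$, which commute with $(S_t)$, are uniformly bounded on $B$, converge strongly to the identity on $B$, and map $B$ into $D(A)$. Setting $z_n:=R_nz$, linearity yields $z_n(0)=0$ and
\[
z_n(t)=\int_0^tS_{t-r}R_nG(r)\,dr,\qquad t\in[0,T].
\]
The delicate technical step is to promote this mild representation to the strong ODE $z_n\in W^{1,p}(0,T;B)$ with $z_n'(t)+Az_n(t)=R_nG(t)$ for a.e.\ $t$; this is where the UMD property of $Y$ and the hypothesis $A_Y\in\BIP^-(\pi/2,Y)$ enter, through the Dore--Venni maximal regularity of the abstract parabolic operator $\Lambda_T$ (Proposition \ref{Prop:2.1} applied on $L^p(0,T;Y)$), which is not available directly on $B$.

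With the strong equation at hand, the rest is essentially algebraic. For a.e.\ $t$ and every $z_n^*\in\partial|z_n(t)|_B$, apply (\ref{dar-2.1}) with $x:=z_n(t)\in D(A)$ and $y:=z(t)-z_n(t)+v(t)\in B$, integrate against $q_t$, substitute $-Az_n=z_n'-R_nG$, and use $|z_n^*|_{B^*}\le 1$ to obtain
\[
\langle z_n'(t),z_n^*\rangle\le|(I-R_n)G(t)|_B-k_1|z_n(t)|_B+k_2|z(t)-z_n(t)+v(t)|_B^m+\int_M\eta(t,u)\,q_t(du).
\]
Taking the supremum over $z_n^*\in\partial|z_n(t)|_B$ and invoking the identity $\frac{d^+}{dt}|z_n(t)|_B=\sup\{\langle z_n'(t),x^*\rangle:x^*\in\partial|z_n(t)|_B\}$, then multiplying by the integrating factor $e^{k_1t}$ and integrating on $[0,t]$ using $z_n(0)=0$, yields
\[
|z_n(t)|_B\le\int_0^te^{-k_1(t-s)}\bigl[|(I-R_n)G(s)|_B+k_2|z(s)-z_n(s)+v(s)|_B^m+\textstyle\int_M\eta(s,u)\,q_s(du)\bigr]ds.
\]

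Finally I would let $n\to\infty$. The strong convergence $R_n\to I$ on $B$ together with $\sup_n\|R_n\|_{\mathcal{L}(B)}<\infty$ give $z_n(s)\to z(s)$ and $(I-R_n)G(s)\to 0$ in $B$ pointwise, with uniform integrable dominants coming from $z\in\mathcal{C}([0,T];B)$ and $G\in L^p(0,T;B)$; dominated convergence then delivers (\ref{eq:a'priori}). The main obstacle will be the strong-form upgrade in Paragraph~2: without the Dore--Venni machinery furnished by the UMD hypothesis on $Y$ and the BIP condition on $A_Y$, the derivative $z_n'$ would have no pointwise meaning and the sub-differential computation at the heart of Paragraph~3 could not be carried out rigorously.
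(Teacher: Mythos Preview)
Your proposal is correct and follows essentially the same route as the paper: Yosida regularisation $R_\lambda=\lambda(\lambda I+A_B)^{-1}$, Dore--Venni maximal regularity in the auxiliary UMD space $Y$ to upgrade the mild equation for $z_\lambda:=R_\lambda z$ to a strong one, application of the dissipative condition (\ref{dar-2.1}) to obtain a differential inequality for $|z_\lambda(\cdot)|_B$, Gronwall, and passage to the limit. The only cosmetic differences are that the paper applies (\ref{dar-2.1}) with $y=v(t)$ after rewriting the strong equation as $z_\lambda'+A_Yz_\lambda=\bar F(t,z_\lambda+v,q_t)+\zeta_\lambda$ with correction $\zeta_\lambda:=R_\lambda\bar F(t,z+v,q_t)-\bar F(t,z_\lambda+v,q_t)$ (and uses $d^-/dt$), whereas you keep $z_n'+Az_n=R_nG$ and take $y=z(t)-z_n(t)+v(t)$ with the correction $(I-R_n)G$; both arrangements yield the same limiting inequality.
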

\begin{proof}
For $\lambda>0$ large enough define $R_\lambda:=\lambda(\lambda I+A_B)^{-1}\in \Lin(B),$ $z_\lambda(t):=R_\lambda z(t)$ and
\[
\bar{f}_\lambda(t):=R_\lambda \bar F(t,z(t)+v(t),q_t), \ \ \ \ t\in [0,T].
\]
Then $z_\lambda$ satisfies
\[
z_\lambda(t)=\int^t_0 S_{t-r}\bar f_\lambda(r)\,dr, \ \ \ \ t\in [0,T].
\]
Since $\norm{R_\lambda}_{\Lin(B,Y)}\le M$ for $\lambda>0$ large, we have $\bar f_\lambda \in L^\gamma(0,T;Y).$ Hence, by the Dore-Venni Theorem (see Theorem 3.2 in \cite{DV}), $z_\lambda\in W^{1,\gamma}(0,T;Y)\cap L^\gamma(0,T;D(A_Y))$ and satisfies in the $Y-$sense,
\[
\frac{dz_\lambda}{dt}(t)+A_Yz_\lambda(t)=\bar F(t,z_\lambda(t)+v(t),q_t)+\zeta_\lambda(t), \ \ \ \mbox{ for a.e. } \ t\in [0,T]\\
\]
with
\[
\zeta_\lambda(t):=\bar f_\lambda(t)-\bar F(t,z_\lambda(t)+v(t),q_t), \ \ \  \ t\in [0,T].
\]
Since $Y$ is continuously embedded in $B,$ the map $z_\lambda:[0,T]\to B$ is also a.e. differentiable and by Assumption \ref{Assum3}--(2) satisfies
\[
\frac{d^-}{dt}\abs{z_\lambda(t)}_B\le -k_1|z_\lambda(t)|_B+k_2|v(t)|^m_B+\zeta_{\lambda}(t)+\int_M\eta(t,u)\,q_t(du), \ \ \  \ t\in [0,T].
\]
Using Gronwall's Lemma it follows that
\[
|z_\lambda(t)|_B\le \int^t_0e^{-k_1(t-s)}\Bigl[k_2|v(s)|^m_B+\zeta_{\lambda}(s)+\int_M \eta(s,u)\,q_s(du)\Bigr]ds, \ \ \ \ t\in [0,T]
\]
and the result follows since $z_\lambda(t)\to z(t)$ for $t\in [0,T]$ and $\zeta_\lambda\to 0$ in $L^1(0,T;B).$
\end{proof}
\begin{rem}\label{aprioriY}
If $A\in\BIP^-(\frac{\pi}{2},\bE)$ then $D(A^\delta)\simeq[\bE,D(A)]_\delta$ is a UMD space (see e.g. \cite[Theorem 4.5.2]{amann}). Since the resolvent of $A$ commutes with $A^\delta,$ it follows that the realization of $A$ in $D(A^\delta)$ belongs to $\BIP^-(\frac{\pi}{2},D(A^\delta)).$ Hence, if Assumptions \ref{Assum2}-\ref{Assum3} are also satisfied then Lemma \ref{Lem:a'priori} applies with $Y=D(A^\delta).$
\end{rem}
%
Finally, we will need the following measurability result.
\begin{lem}\label{gammatmeas}
Suppose that Assumption \ref{Assum3} is satisfied and the Banach space $B$ is separable. Define
\[
\Y^\gamma(0,T;M):=\left\{\lambda\in\Y(0,T;M):\int_0^T\!\int_M\eta(t,u)^\gamma\,\lambda(du,dt)<+\infty\right\}.
\]
Then, for each $t\in [0,T],$ the mapping $\Gamma_t:\mathcal{C}([0,T];B)\times\Y^\gamma(0,T;M)\to B$ defined by
\begin{equation}\label{gammat}
\Gamma_t(y,\lambda):=\int_0^t\int_M S_{t-r}F(r,y(r),u)\,\lambda(du,dr)
\end{equation}
is Borel-measurable.
\end{lem}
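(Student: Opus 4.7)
The plan is to decouple the two arguments: I will show that $\Gamma_t$ is continuous in $y$ for each fixed $\lambda$ and Borel measurable in $\lambda$ for each fixed $y$, and then assemble these into joint Borel measurability via a Carath\'eodory-type argument. Note that, since $B$ is separable by assumption, $\mathcal{C}([0,T];B)$ is a separable metric space, and by Proposition~\ref{stsuslinmet} the space $\Y^\gamma(0,T;M)$ equipped with the (restriction of the) stable topology is metrisable Suslin. A Pettis argument will eventually be needed to upgrade weak Borel measurability to strong Borel measurability once both pieces are in hand.

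Well-definedness of the Bochner integral defining $\Gamma_t(y,\lambda)$ is immediate from the estimate \rf{dar-2.3}, the elementary bound $\eta\le 1+\eta^\gamma$ (valid since $\gamma\geq 1$), and the definition of $\Y^\gamma$; the integrand is strongly measurable in $(r,u)$ because $B$ is separable and $F$ is jointly Borel by Assumption~\ref{Assum3}--(2). For continuity in $y$, suppose $y_n\to y$ uniformly on $[0,T]$. Then $y_n(r)\to y(r)$ for every $r$, the sequence is uniformly bounded, and by the uniform-in-$u$ continuity of $F(r,\cdot,u)$ we obtain $F(r,y_n(r),u)\to F(r,y(r),u)$ with uniform-in-$u$ domination of the form $k_2\sup_n\norm{y_n}_\infty^m+\eta(r,u)$, which is $\lambda$-integrable over $[0,t]\times M$; dominated convergence in the Bochner sense then yields $\Gamma_t(y_n,\lambda)\to\Gamma_t(y,\lambda)$ in $B$.

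For Borel measurability in $\lambda$ with $y$ fixed, pick $x^*\in B^*$ and set $g(r,u):=\seq{S_{t-r}F(r,y(r),u),x^*}\mathbf{1}_{[0,t]}(r)$. For each $N\geq 1$ the truncation $g_N:=(g\wedge N)\vee(-N)$ is uniformly bounded by $N$ and, since $g(r,\cdot)$ is continuous on $M$ by Assumption~\ref{Assum3}--(2), so is $g_N(r,\cdot)$. Applying Lemma~\ref{Lem:lscym}--(1) to both $g_N$ and $-g_N$ with dominating function $\gamma(r)\equiv N$ shows that $\lambda\mapsto\int_0^t\!\int_M g_N\,d\lambda$ is stably continuous on $\Y^\gamma$. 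Since $\abs{g}$ is $\lambda$-integrable, dominated convergence yields $\int\!\int g\,d\lambda=\lim_N\int\!\int g_N\,d\lambda$ pointwise in $\lambda$, and hence $\lambda\mapsto\seq{\Gamma_t(y,\lambda),x^*}$ is Borel measurable as a pointwise limit of continuous functions.

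To conclude, for each $x^*\in B^*$ the scalar map $(y,\lambda)\mapsto\seq{\Gamma_t(y,\lambda),x^*}$ is continuous in the (separable metric) variable $y$ and Borel in $\lambda$, hence jointly Borel by the classical Carath\'eodory theorem; this shows $\Gamma_t$ is weakly Borel, and the Pettis measurability theorem together with the separability of $B$ promotes this to strong Borel measurability. The main obstacle is the unboundedness of $F$ in the control variable, which precludes a direct use of the stable topology characterisation of convergence; the truncation $g_N$ together with the $\lambda$-integrability of $\eta^\gamma$ built into the definition of $\Y^\gamma$ are precisely what allow dominated convergence to close the gap.
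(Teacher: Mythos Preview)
Your proof is correct and follows essentially the same strategy as the paper: separate continuity in $y$ from measurability in $\lambda$, handle the latter via truncation of the scalar integrand and passage to the limit through the stable topology, and assemble via Pettis and a Carath\'eodory-type argument. The only differences are cosmetic --- you use two-sided truncation with dominated convergence where the paper uses one-sided truncation with monotone convergence, and you apply Carath\'eodory to the scalar maps before invoking Pettis on the joint map, whereas the paper applies Pettis to the partial map $\Gamma_t(y,\cdot)$ first and then a vector-valued Carath\'eodory lemma (Lemma~1.2.3 in \cite{cadfval}) using the separability of $\Y^\gamma(0,T;M)$.
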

\begin{proof}
We fix $t\in [0,T]$ and prove first that the mapping $\Gamma_t(y,\cdot)$ is weakly-measurable for $y\in\mathcal{C}\left([0,T];B\right)$ fixed. Observe that for $x^*\in B^*$ fixed and $\lambda\in\Y^\gamma(0,T;M)$ we have
\begin{align*}
\seq{\Gamma_t(y,\lambda),x^*}&=\seq{\int_0^t\int_M S_{t-r}F(r,y(r),u)\,\lambda(du,dr),x^*}\\
&=\int_0^t\int_M \bigl\langle S_{t-r}F(r,y(r),u),x^* \bigr\rangle \,\lambda(du,dr).
\end{align*}
For each $N\in\mathds{N}$ define
\[
\phi_N(\lambda):=\int_0^t\int_M \min\{N,\seq{S_{t-r}F(r,y(r),u),x^*}\} \,\lambda(du,dr), \ \ \ \lambda\in\Y^\gamma(0,T;M).
\]
By Assumption \ref{Assum3}--(2), the integrand in the above expression is bounded and continuous with respect to $u\in M.$ Therefore, by Lemma \ref{Lem:contym} $\phi_N$ is continuous for each $N\in\mathds{N},$ and by the monotone convergence Theorem, $\phi_N(\lambda)\to \seq{\Gamma_t(y,\lambda),x^*}$ as $N\to\infty$ for all $\lambda\in\Y^\gamma(0,T;M).$ Hence, $\seq{\Gamma_t(y,\cdot),x^*}$ is measurable, i.e. $\Gamma_t(y,\cdot)$ is weakly-measurable. Since $B$ is separable, by the Pettis measurability Theorem (see \cite[Theorem 3.1.1]{showalter}), $\Gamma_t(y,\cdot)$ is also measurable.

Now, we prove that for $\lambda\in\Y^\gamma(0,T;M)$ fixed, the map $\Gamma_t(\cdot,\lambda)$ is continuous. Let $y_n\to y$ in $\mathcal{C}\left([0,T];B\right).$ Then, by Assumption \ref{Assum3}--(2) we have
\begin{align*}
&\bigr|S_{t-r}F(r,y(r),u)-S_{t-r}F(r,y_n(r),u)\bigr|_B\\
&\phantom{S_{t-r}}\le\norm{S_{t-r}}_{\Lin(B)}\bigr|F(r,y(r),u)-F(r,y_n(r),u)\bigr|_{B}\to 0
\end{align*}
as $n\to\infty$ for all $(r,u)\in [0,t]\times M.$ Moreover, for $\rho>0$ fixed there exists $\bar n\in\mathds{N}$ such that
\[
\sup_{r\in [0,T]}\bigl|y_n(r)-y(r)\bigr|_B < \rho \ \ \ \forall n\geq\bar N.
\]
Set $\rho':=\rho\vee \max_{1\le n\le\bar N-1}\sup_{r\in [0,T]}|y(r)-y_n(r)|_B.$ Then
\[
\sup_{r\in [0,T]}\bigl|y_n(r)-y(r)\bigr|_B < \rho', \ \ \ \forall n\in\mathds{N}
\]
and by (\ref{dar-2.3}), we have
\begin{align*}
\bigl|S_{t-r}&F(r,y(r),u)-S_{t-r}F(r,y_n(r),u)\bigr|_B\\
&\le \norm{S_{t-r}}_{\Lin(B)}\cdot \left[k_2\left(2^{m-1}\rho'^m+(2^{m-1}+1)\norm{y(\cdot)}_{\mathcal{C}([0,T];B)}^m\right)+\eta(r,u)\right].
\end{align*}
As $\eta$ belongs to $L^1([0,T]\times M;\lambda),$ so does the RHS of the above inequality. Therefore, by Lebesgue's dominated convergence Theorem we have
\begin{align*}
&\abs{\Gamma_t(y,\lambda)-\Gamma_t(y_n,\lambda)}_B\\
&\phantom{\Gamma_t}\le
\int_0^t\int_M\abs{S_{t-r}F(r,y(r),u)-S_{t-r}F(r,y_n(r),u)}_B\,\lambda(du,dr)\to 0
\end{align*}
as $n\to\infty,$ that is, $\Gamma_t(\cdot,\lambda)$ is continuous. Since $\Y^\gamma(0,T;M)$ is separable and metrisable, by Lemma 1.2.3 in \cite{cadfval} it follows that $\Gamma_t$ is jointly measurable.
\end{proof}
\begin{proof}[Proof of Theorem \ref{Th:existence2}]
Since we can write $-A+F=-(A+\nu I)+F+\nu I,$ by Remark \ref{nu0} we can assume without loss of generality that $\nu=0.$ Let
\[
\pi_n=\left(\Omega^n,\Fil^n,\mathds{P}^n,\mathds{F}_n,\{W_n(t)\}_{t\geq 0},\{q^n_t\}_{t\geq 0},\{X_n(t)\}_{t\geq 0}\right), \ \ n\in\mathds{N},\]
be a minimizing sequence of weak admissible relaxed controls, that is,
\[
\lim_{n\to\infty}\bar{J}(\pi_n)=\inf_{\pi\in\bar{\U}_{\rm ad}^{\rm w}(x)}\bar J({\pi}).
\]
From this and Assumption \ref{Assum3}-(5) it follows that there exists $R>0$ such that
\begin{equation}\label{estqn}
\Exp^n\int_0^T\!\!\int_M\eta(t,u)^{\gamma}\,q_t^n(du)\,dt\le-\frac{C_1}{C_2}+\frac{1}{C_2}\Exp^n\int_0^T\!\!\int_M h(t,X_n(t),u)\,q_t^n(du)\,dt\le R
\end{equation}
for all $n\in\mathds{N}.$

\noindent\textsc{Step 1.} Set $p=\frac{\gamma}{m}.$ Then, by (\ref{expcoercive}) $p>2$ and we can find $\alpha$ such that
\begin{equation}\label{delaphasigma}
\delta+\frac1p<\alpha<\frac12-\sigma.
\end{equation}
For each $n\in\mathds{N}$ define the process
\begin{align}
y_n(t)&:=\frac 1{\Gamma (1-\alpha )}\int^t_0(t-r)^{-\alpha}S_{t-r} G(r,X_n(r))\,dW_n(r), \ \ \ t\in [0,T].
\end{align}
Since, by Assumption \ref{Assum2} the mapping $A^{-\sigma}G:[0,\infty)\times B\times M \to\gamma(\bH,\bE)$ is bounded, from Lemma \ref{Lem:A3} we have
\begin{equation}\label{ynlp}
\sup_{n\ge 1}\,{\mathds E}^n\abs{y_n(\cdot)}^p_{L^p(0,T;\bE)}<+\infty .
\end{equation}
Then, by Chebyshev's inequality, the processes $\{y_n(\cdot)\}_{n\in\mathds{N}}$ are uniformly bounded in probability on $L^p(0,T;\bE).$ Since $A^{-1}$ is compact, it follows from Assumption \ref{Assum2}--(1), (\ref{delaphasigma}) and Corollary \ref{C:comp} that $\Lambda_T^{-\alpha}$ is a compact operator from $L^p(0,T;\bE)$ into $\mathcal{C}([0,T];B).$ Hence, the family of laws of the processes
\[
v_n:=\Lambda_T^{-\alpha}y_n, \ \  \ n\in\mathds{N},
\]
is tight on $\mathcal{C}([0,T];B)$. Now, for each $n\in\mathds{N}$ set $z_n:=\Lambda_T^{-1}f_n,$ i.e.
\begin{equation}\label{dar-2.61}
z_n(t)=\int_0^t S_{t-r}\bar F(r,X_n(r),q_r^n)\,dr, \ \ \ t\in [0,T].
\end{equation}
Then, by Theorem \ref{Th:2.1}, we have
\begin{equation}\label{Xn2}
X_n(t)=S_tx_0+z_n(t)+v_n(t), \ \ \Prob^n-\text{a.s.} \ \ t\in [0,T].
\end{equation}
Applying Lemma \ref{Lem:a'priori} to the process $z_n(\cdot)$ and (\ref{Xn2}) we obtain the estimate
\begin{equation}\label{yn}
|z_n(t)|_B\le \int^t_0e^{-k_1(t-s)}\Bigl[k_2|S_tx_0+v_n(s)|^m_B+\int_M \eta(s,u)\,q^n_s(du)\Bigr]ds, \ \  t\in [0,T].
\end{equation}
Moreover, by (\ref{ynlp}) and Lemma \ref{L:reg}, we have
\begin{equation}\label{estvn}
\sup_{n\in\mathds{N}}\, \Exp^n\biggl[\sup_{\, t\in [0,T]}|v_n(t)|_B^\zeta\biggr]
\le \sup_{n\in\mathds{N}}\, \Exp^n\biggl[\sup_{\, t\in [0,T]}|v_n(t)|_{D(A^\delta)}^\zeta\biggr]<+\infty, \ \ \forall \zeta\geq p.
\end{equation}
Using (\ref{estqn}), (\ref{yn}) and (\ref{estvn}) with $\zeta=m^2p$ we get
\[
\sup_{n\in\mathds{N}}\Exp^n\biggl[\sup_{\, t\in [0,T]}|z_n(t)|_B^{mp}\biggr]<+\infty.
\]
This, in conjunction with (\ref{Xn2}) and (\ref{estvn}) with $\zeta=mp,$ yields
\begin{equation}\label{ExpXn}
\sup_{n\in\mathds{N}}\Exp^n\biggl[\sup_{\, t\in [0,T]}|X_n(t)|_B^{mp}\biggr]<+\infty.
\end{equation}
Thus, by (\ref{dar-2.3}) and (\ref{ExpXn}), the processes 
\[
f_n(t):=\bar F(t,X_n(t),q_t^n), \ \ \ t\in [0,T], \ \ n\in\mathds{N},
\]
satisfy
\[
\sup_{n\in\mathds{N}}\Exp^n \abs{f_n(\cdot)}_{L^p(0,T;\bE)}^p<+\infty.
\]
This implies, again by Chebyshev's inequality, that the sequence of processes $\{f_n(\cdot)\}_{n\in\mathds{N}}$ is uniformly bounded in probability on $L^p(0,T;\bE).$ By compactness of the operator $\Lambda_T^{-1}$ and Corollary \ref{C:comp}, it follows that the family of the laws of $z_n=\Lambda_T^{-1}f_n,$ $n\in\mathds{N},$ is tight on $\mathcal{C}([0,T];B).$ By (\ref{Xn2}) we conclude that the family of laws of the processes $X_n,\ n\in\mathds{N},$ is also tight on $\mathcal{C}([0,T];B).$

Now, for each $n\in\mathds{N}$ we define a random Young measure $\lambda_n$ on $(\Omega^n,\Fil^n,\Prob^n)$ by the following formula
\begin{equation}\label{lambdan}
\lambda_n(du,dt):=q^n_t(du)\,dt.
\end{equation}
By (\ref{estqn}) and Lemma \ref{tightrym} the family of laws of $\{\lambda_n\}_{n\in\mathds{N}}$ is tight on $\Y(0,T;M).$ Hence, by Prohorov's Theorem, there exist a probability measure $\mu$ on $\mathcal{C}([0,T];B)^3\times\Y(0,T;M)$ and a subsequence of $\{X_n,z_n,v_n,\lambda_n\}_{n\in\mathds{N}},$ which we still denote by $\{X_n,z_n,v_n,\lambda_n\}_{n\in\mathds{N}},$ such that
\begin{equation}\label{lXn}
{\rm law}(X_n,z_n,v_n,\lambda_n)\to \mu, \ \ \ \text{weakly as} \ n\to\infty.
\end{equation}

\noindent\textsc{Step 2.} Since the space $\mathcal{C}([0,T];B)^3\times\Y(0,T;M)$ is separable and metrisable, using Dudley's generalization of the Skorohod representation Theorem (see e.g. Theorem 4.30 in \cite{kallen}) we ensure the existence of a probability space $(\tOmega,\tFil,\tProb)$ and a sequence of random variables $\{\tX_n,\tz_n,\tv_n,\tlambda_n\}_{n\in\mathds{N}}$ with values in $\mathcal{C}([0,T];B)^3\times\Y(0,T;M),$ defined on $(\tOmega,\tFil,\tProb),$ such that
\begin{equation}\label{eqlawsn}
(\tX_n,\tz_n,\tv_n,\tlambda_n)\stackrel{d}{=}(X_n,z_n,v_n,\lambda_n) , \ \ \ \text{for all }n\in\mathds{N},
\end{equation}
and, on the same stochastic basis $(\tOmega,\tFil,\tProb),$ a random variable $\bigl(\tX,\tz,\tv,\tlambda\bigr)$ with values in $\mathcal{C}([0,T];B)^3\times\Y(0,T;M)$ such that
\begin{equation}\label{tXn}
(\tX_n,\tz_n,\tv_n)\to(\tX,\tz,\tv), \ \ \ \text{in } \ \mathcal{C}([0,T];B)^3, \ \ \tProb-\text{a.s.}
\end{equation}
and
\begin{equation}\label{tlambdan}
\tlambda_n\to \tlambda, \ \ \ \text{stably in } \ \Y(0,T;M), \ \ \tProb-\text{a.s.}
\end{equation}
For each $t\in[0,T],$ let $\pi_t$ denote the evaluation map $\mathcal{C}([0,T];B)\ni z\mapsto z(t)\in B,$ and let $\Phi_t:\mathcal{C}([0,T];B)^2\times\Y^\gamma(0,T;M)\to B$ be the map defined by
\[
\Phi_t(x,z,\lambda):=\Gamma_t(x,\lambda)-\pi_t(z), \ \ \ (x,z)\in \mathcal{C}([0,T];B)^2, \ \lambda\in\Y^\gamma(0,T;M),
\]
with $\Gamma_t$ as in (\ref{gammat}). By Lemma \ref{gammatmeas}, the map $\Phi_t$ is measurable. Hence, by (\ref{dar-2.61}) and (\ref{eqlawsn}), for each $t\in [0,T]$ we have
\begin{equation}\label{tzn0}
\tz_n(t)=\int_0^t \int_M S_{t-r} F(r,\tX_n(r),u)\,\tlambda_n(du,dr), \ \ \ \tProb-\text{a.s.}
\end{equation}
A similar argument used with (\ref{Xn2}) and (\ref{eqlawsn}) yields
\begin{equation}\label{tXn2}
\tX_n(t)=S_t x_0+\tz_n(t)+\tv_n(t), \ \ \ \tProb-\text{a.s.}, \ \ \ \ \ t\in [0,T].
\end{equation}
Moreover, by Lemma \ref{Lem:disrym}, for each $n\in\mathds{N}$ there exists a relaxed control process $\{\tq^n_t\}_{t\geq 0}$ defined 
on $(\tOmega,\tFil,\tProb)$ such that
\begin{equation}\label{tqn}
\tlambda_n(du,dt)=\tq^n_t(du)\,dt, \ \ \ \tProb-\mbox{a.s.}
\end{equation}
Thus, we can rewrite (\ref{tzn0}) as
\begin{equation}\label{tzn1}
\tz_n(t)=\int_0^t S_{t-r} \bar F(r,\tX_n(r),\tq_r^n)\,dr, \ \ \ \tProb-\text{a.s.}, \ \ \ t\in [0,T].
\end{equation}
\textsc{Step 3.} For each $n\in\mathds{N},$ we define an $\bE-$valued process $\tM_n(\cdot)$ by
\begin{equation}\label{tMn1}
\tM_n(t):=A^{-1}\tX_n(t)+\int_0^t \tX_n(r)\,dr-A^{-1}x_0-\int_0^t A^{-1}\bar F(r,\tX_n(r),\tq_r^n)\,dr, \ \ \ \ t\in [0,T],
\end{equation}
and a filtration $\tilde{\mathds{F}}_n:=\bigl\{\tilde{\Fil}_t^n\bigr\}_{t\in [0,T]}$ by
\[
\tilde{\Fil}_t^n:=\sigma\{(\tX_n(s),\tq^n_s):0\le s\le t\}, \ \ \ t\in[0,T].
\]
\begin{lem}\label{tMnmart}
The process $\tM_n(\cdot)$ is a $\mathds{\tilde{F}}_n-$martingale with cylindrical quadratic variation
\[
[\tilde{M}_n](t)=\int_0^t\tilde{Q}_n(s)\,ds, \ \ \ \ t\in [0,T].
\]
where $\tilde{Q}_n(t):=[A^{-1} G(t,\tX_n(t))]\circ[A^{-1}G(t,\tX_n(t))]^*\in\Lin(\bE^*,\bE).$ Moreover, $\tM_n(\cdot)$ satisfies
\begin{equation}\label{tMn2}
\tM_n(t)=A^{-1}\tv_n(t)+\int_0^t \tv_n(s)\,ds, \ \ \ \tProb-\text{a.s.}, \ \ \ t\in [0,T].
\end{equation}
\end{lem}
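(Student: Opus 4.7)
The plan is to first prove the identity (\ref{tMn2}) by a purely algebraic manipulation on $(\tOmega,\tFil,\tProb)$, then on the original probability space $(\Omega^n,\Fil^n,\Prob^n)$ identify the analogously defined process $M_n$ with a genuine stochastic integral (which immediately gives the martingale property and the cylindrical quadratic variation), and finally transfer these two properties to the tilde space via the equality in law (\ref{eqlawsn}).

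For (\ref{tMn2}): I would substitute the decomposition $\tX_n(t)=S_t x_0+\tz_n(t)+\tv_n(t)$ from (\ref{tXn2}) together with the mild expression (\ref{tzn1}) for $\tz_n$ into the definition of $\tM_n$. The deterministic semigroup identity $A^{-1}(I-S_t)=\int_0^t S_r\,dr$ disposes of the $S_t x_0$ contribution, and a Fubini exchange yields
\[
\int_0^t \tz_n(s)\,ds=\int_0^t A^{-1}(I-S_{t-r})\bar F(r,\tX_n(r),\tq_r^n)\,dr,
\]
which combined with $A^{-1}\tz_n(t)=\int_0^t A^{-1}S_{t-r}\bar F(r,\tX_n(r),\tq_r^n)\,dr$ makes every $\bar F$-term cancel and leaves exactly $A^{-1}\tv_n(t)+\int_0^t \tv_n(s)\,ds$.

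On $(\Omega^n,\Fil^n,\Prob^n)$, let $M_n$ be defined by the same formula with $X_n,q^n$ in place of $\tX_n,\tq^n$; the identical algebra gives $M_n(t)=A^{-1}v_n(t)+\int_0^t v_n(s)\,ds$. Since $\sigma<\tfrac12$, Assumption \ref{Assum2}-(2) combined with the factorization $A^{-1}G=A^{-(1-\sigma)}A^{-\sigma}G$ and the embedding $D(A^{1-\sigma})\hookrightarrow D_A(\tfrac12,2)$ yields $A^{-1}G(\cdot,X_n(\cdot))\in\mathcal{M}^2(0,T;\gamma(\bH,D_A(\tfrac12,2)))$. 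Lemma \ref{Lem:strong} therefore applies to the stochastic convolution $A^{-1}v_n$ and produces
\[
A^{-1}v_n(t)+\int_0^t v_n(s)\,ds=\int_0^t A^{-1}G(s,X_n(s))\,dW_n(s),\qquad \Prob^n\text{-a.s.}
\]
Hence $M_n$ is an $\Fil^n$-martingale with cylindrical quadratic variation $\int_0^t Q_n(s)\,ds$, where $Q_n(s):=[A^{-1}G(s,X_n(s))]\circ[A^{-1}G(s,X_n(s))]^*$. Since, by its defining formula and Lemma \ref{gammatmeas}, $M_n$ is measurable with respect to $\bar\Fil^{\,n}_t:=\sigma\{(X_n(r),q_r^n):0\le r\le t\}\subset\Fil^n_t$, the tower property shows that $M_n$ remains a martingale with the same cylindrical quadratic variation with respect to $\{\bar\Fil^{\,n}_t\}_{t\in[0,T]}$.

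Finally, for fixed $0\le s<t\le T$, $x^*,y^*\in\bE^*$ and any bounded continuous $\phi$ on $\mathcal{C}([0,s];B)\times\Y(0,s;M)$, these properties translate into
\[
\Exp^n\bigl[\phi(X_n|_{[0,s]},q^n|_{[0,s]})\,\langle M_n(t)-M_n(s),x^*\rangle\bigr]=0
\]
and the analogous identity for $\langle M_n(t),x^*\rangle\langle M_n(t),y^*\rangle-\langle M_n(s),x^*\rangle\langle M_n(s),y^*\rangle-\int_s^t\langle Q_n(r)x^*,y^*\rangle\,dr$. Because $(M_n,Q_n)$ is a Borel-measurable function of $(X_n,q^n)$ (Lemma \ref{gammatmeas} for the drift part of $M_n$; the continuity in Assumption \ref{Assum2}-(2) for $Q_n$), the equality in law (\ref{eqlawsn}) carries both identities over to $(\tM_n,\tilde Q_n)$ on $(\tOmega,\tFil,\tProb)$, which is precisely the statement that $\tM_n$ is an $\tilde{\mathds{F}}_n$-martingale with $[\tM_n](t)=\int_0^t\tilde Q_n(s)\,ds$. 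The main obstacle I foresee is the regularity check needed for Lemma \ref{Lem:strong}, i.e.\ verifying $A^{-1}G(\cdot,X_n(\cdot))\in\mathcal{M}^2(0,T;\gamma(\bH,D_A(\tfrac12,2)))$, which hinges crucially on $\sigma<\tfrac12$ and the interpolation identity $[\bE,D(A)]_\theta=D(A^\theta)$ for the UMD space $\bE$.
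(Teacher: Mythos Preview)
Your proposal is correct and follows essentially the same route as the paper: both arguments identify $M_n$ on the original space with the stochastic integral $\int_0^t A^{-1}G(s,X_n(s))\,dW_n(s)$ via Lemma \ref{Lem:strong}, using precisely the factorization $A^{-1}G=A^{-(1-\sigma)}A^{-\sigma}G$ and the embedding $D(A^{1-\sigma})\subset D_A(\tfrac12,2)$, and then transfer the martingale property and quadratic variation to $(\tOmega,\tFil,\tProb)$ through the equality in law $(X_n,q^n)\stackrel{d}{=}(\tX_n,\tq^n)$. Your inline Fubini computation for $\int_0^t\tz_n(s)\,ds$ is exactly the content of Lemma \ref{Lem:mildclassical} in the paper's appendix, and the only cosmetic difference is that the paper applies Lemma \ref{Lem:strong} to the full process $A^{-1}X_n$ (with nonzero $\xi$ and $f$) rather than to $A^{-1}v_n$ alone as you do.
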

\begin{proof}[Proof of Lemma \ref{tMnmart}] 
By Theorem 2.6.13 in \cite{pazy}, for each $t\geq 0$ we have $A^{-1}S_t=S_tA^{-1}.$ Therefore, the process $X_n(\cdot)$ satisfies
\[
A^{-1}X_n(t)=S_t A^{-1}x_0+\int_0^t S_{t-r}A^{-1}\bar F(r,X_n(r),q^n_r)\,dr+\int_0^t S_{t-r} A^{-1}G(r,X_n(r))\,dW_n(r).
\]
Since $1-\sigma>\frac12,$ we have (see e.g. \cite{triebel}),
\[
{\rm Range} (A^{-(1-\sigma)})=D(A^{1-\sigma})\subset D_A(\textstyle{\frac12},2).
\]
 Therefore, by the left-ideal property of the $\gamma-$radonifying operators, for each $(t,x)\in [0,T]\times D(A^\delta)$ we get
\[
A^{-1}G(t,x)=A^{-(1-\sigma)}A^{-\sigma}G(t,x)\in \gamma\left(\bH,D_A(\textstyle{\frac12},2)\right).
\]
Similarly, we see that $A^{-1}x\in D(A^{1-\delta})\subset D_A(\textstyle{\frac12},2).$ Thus, from Lemma \ref{Lem:strong}, it follows
\[
A^{-1}X_n(t)+\int_0^t X_n(s)\,ds=A^{-1}x_0+\int_0^t A^{-1}\bar F(r,X_n(r),q^n_r)\,dr+\int_0^t A^{-1}G(r,X_n(r))\,dW_n(r).
\]
Then, for each $n\in\mathds{N},$ the $\bE-$valued he process $M_n(\cdot)$ defined as
\[
M_n(t):=A^{-1}X_n(t)+\int_0^t X_n(s)\,ds-A^{-1}x_0-\int_0^t A^{-1}\bar F(r,X_n(r),q^n_r)\,dr, \ \ \ \ t\geq 0,
\]
is a $\mathds{F}_n-$martingale with cylindrical quadratic variation
\[
[M_n](t)=\int_0^t Q_n(s)\,ds, \ \ \ \ t\geq 0,
\]
where $Q_n(t)=[A^{-1} G(t,X_n(t))]\circ[A^{-1} G(t,X_n(t))]^*\in\Lin(\bE^*,\bE).$ Clearly, the process $M_n(\cdot)$ is adapted to the filtration generated by the process $(X_n,q^n),$ which in turn is adapted to $\mathds{F}_n.$ Then, the first part of the Lemma follows since
\[
(X_n,q^n)\stackrel{d}{=}(\tX_n,\tq^n).
\]
Now, by (\ref{tzn1}) and Lemma \ref{Lem:mildclassical} in the Appendix, the process $\tz_n(\cdot)$ satisfies
\begin{equation}\label{tzn2}
A^{-1}\tz_n(t)+\int_0^t \tz_n(s)\,ds=\int_0^t A^{-1}\tf_n(s)\,ds, \ \ \ \tProb-\text{a.s.}, \ \ \ t\in [0,T],
\end{equation}
with
\begin{equation}\label{tfn}
\tf_n(t):=\bar F(t,\tX_n(t),\tq_t^n), \ \ \  t\in [0,T].
\end{equation}
Thus, using (\ref{tXn2}) in (\ref{tzn2}) we get
\begin{equation}\label{tXn3}
A^{-1}\tX_n(t)-A^{-1}S_tx_0-A^{-1}\tv_n(t)+\int_0^t(\tX_n(r)-S_r x_0-\tv_n(r))\,dr=\int_0^t A^{-1}\tf_n(s)\,ds.
\end{equation}
From the identity (see e.g. \cite[Theorem 1.2.4]{pazy}),
\[
-A\int_0^t S_rx_0\,dr=S_tx_0-x_0
\]
we get
\[
\int_0^t S_rx_0\,dr+A^{-1}S_t x_0=A^{-1}x_0.
\]
Using this in (\ref{tXn3}) and rearranging the terms we obtain
\[
A^{-1}\tX_n(t)+\int_0^t\tX_n(r)\,dr=A^{-1}x_0+\int_0^t A^{-1}\tf_n(s)\,ds+A^{-1}\tv_n(t)+\int_0^t\tv_n(r)\,dr, \ \ \ \tProb-\text{a.s.},
\]
for $t\in [0,T]$ and so, in view of (\ref{tMn1}) and (\ref{tfn}), equality (\ref{tMn2}) follows.
\end{proof}
\noindent\textsc{Step 4.} We now define a $\bE-$valued process $\tM(\cdot)$ by the formula
\[
\tM(t):=A^{-1}\tv(t)+\int_0^t \tv(s)\,ds, \ \ \ \ t\in [0,T].
\]
Observe that from (\ref{tXn}) we have $\tv_n\to\tv$ in $\mathcal{C}([0,T];\bE)$ $\tProb-$a.s. which combined with (\ref{tMn2}) yields
\begin{equation}\label{tMnconv}
\tM_n\to \tM, \ \ \ \text{ in } \ \mathcal{C}([0,T];\bE), \ \ \ \tProb-\text{a.s.}
\end{equation}
We use once again Lemma \ref{Lem:disrym} to ensure existence of a relaxed control process $\{\tq_t\}_{t\geq 0}$ 
defined on $(\tOmega,\tFil,\tProb)$ such that
\begin{equation}\label{tqLG}
\tlambda(du,dt)=\tq_t(du)\,dt, \ \ \ \tProb-\mbox{a.s.}
\end{equation}
We define the filtration $\tilde{\mathds{F}}:=\bigl\{\tFil_t\bigr\}_{t\in [0,T]}$ by
\[
\tFil_t:=\sigma\{(\tX(s),\tq_s):0\le s\le t\}, \ \ \ \ t\in [0,T].
\]
Let also $\tilde{g}(t):= G(t,\tX(t))$ and $\tilde{Q}(t):=[A^{-1}\tilde{g}(t)]\circ[A^{-1}\tilde{g}(t)]^*\in\Lin(\bE^*,\bE),$ $t\in [0,T].$
\begin{lem}
\label{tMmartLG}
The process $\tM(\cdot)$ is a $\tilde{\mathds{F}}-$ martingale with cylindrical quadratic variation
\[
[\tilde{M}](t)=\int_0^t\tilde{Q}(s)\,ds, \ \ \ \ t\in [0,T].
\]
\end{lem}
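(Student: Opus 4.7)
The plan is to pass to the limit, as $n\to\infty$, in the conclusion of Lemma \ref{tMnmart} using the $\tProb$-a.s.\ convergences (\ref{tXn}), (\ref{tlambdan}) and (\ref{tMnconv}), together with uniform $L^p$ moment bounds. Note that $p=\gamma/m>2$ from (\ref{delaphasigma}). The identity (\ref{tMn2}) combined with (\ref{estvn}) applied with $\zeta=mp\geq p$ yields
\[
\sup_{n\in\mathds{N}}\tExp\Bigl[\sup_{t\in[0,T]}\abs{\tM_n(t)}_{\bE}^{p}\Bigr]<+\infty,
\]
which together with (\ref{tMnconv}) and Vitali's theorem gives $\tM_n(t)\to\tM(t)$ in $L^q(\tOmega;\bE)$ for every $q\in[1,p)$, as well as uniform integrability of the scalar squares $\seq{\tM_n(t),x^*}\seq{\tM_n(t),y^*}$ for fixed $x^*,y^*\in\bE^*$.

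For the martingale property, fix $0\le s<t\le T$, a positive integer $k$, times $0\le s_1<\cdots<s_k\le s$, a bounded continuous $\phi:B^k\to\R$, and functions $\psi_1,\ldots,\psi_l\in\mathcal{C}_b([0,s]\times M)$. By Lemma \ref{tMnmart} applied to $\tilde{\mathds{F}}_n$,
\[
\tExp\Bigl[\bigl(\tM_n(t)-\tM_n(s)\bigr)\phi(\tX_n(s_1),\ldots,\tX_n(s_k))\prod_{i=1}^{l}\int_{0}^{s}\!\!\int_M \psi_i(r,u)\,\tlambda_n(du,dr)\Bigr]=0.
\]
Using (\ref{tXn}) and continuity of $\phi$, the factor $\phi(\tX_n(s_1),\ldots,\tX_n(s_k))$ converges $\tProb$-a.s., and extending each $\psi_i$ by $0$ on $(s,T]\times M$ and invoking Theorem \ref{Lem:contym} with (\ref{tlambdan}) gives $\tProb$-a.s.\ convergence of the integral factors. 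The multiplier is uniformly bounded; combined with the $L^1(\tOmega)$-convergence $\tM_n(t)-\tM_n(s)\to\tM(t)-\tM(s)$ this permits passage to the limit in the expectation to obtain the same identity for $\tM$ and $\tlambda$. Since the product functionals just considered generate $\tFil_s$, a monotone class argument yields $\tExp[\tM(t)-\tM(s)\mid\tFil_s]=0$.

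For the cylindrical quadratic variation, fix $x^*,y^*\in\bE^*$ and apply the same procedure to the real-valued continuous $\tilde{\mathds{F}}_n$-martingale $\seq{\tM_n(\cdot),x^*}\seq{\tM_n(\cdot),y^*}-\int_0^\cdot\seq{\tilde Q_n(r)x^*,y^*}\,dr$. The decisive point is convergence of the compensators: Assumption \ref{Assum2}(2) together with the factorization $A^{-1}G=A^{-(1-\sigma)}A^{-\sigma}G$ shows that the map $B\ni x\mapsto A^{-1}G(s,x)\in\gamma(\bH,\bE)$ is continuous and uniformly bounded in $(s,x)\in[0,T]\times B$, hence so is $B\ni x\mapsto[A^{-1}G(s,x)]\circ[A^{-1}G(s,x)]^*\in\Lin(\bE^*,\bE)$. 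The pointwise convergence $\tX_n(s)\to\tX(s)$ in $B$ and Lebesgue's dominated convergence theorem then yield
\[
\int_0^t\seq{\tilde Q_n(s)x^*,y^*}\,ds\longrightarrow\int_0^t\seq{\tilde Q(s)x^*,y^*}\,ds
\]
$\tProb$-a.s.\ and in $L^1(\tOmega)$. The uniform integrability established in the first paragraph allows the limit transition in the relevant martingale identity (with the same product test functionals as before), which precisely characterizes $\int_0^\cdot \tilde Q(s)\,ds$ as the cylindrical quadratic variation of $\tM$.

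The main obstacle is the passage to the limit in the quadratic expression $\seq{\tM_n(t),x^*}\seq{\tM_n(t),y^*}$: it demands strictly better than $L^2$-integrability of $\sup_{t}\abs{\tM_n(t)}_{\bE}$, uniformly in $n$. This is exactly the role of the margin $p>2$ built into (\ref{delaphasigma}) via the coercivity exponent (\ref{expcoercive}), which is what makes $\zeta=mp>2$ admissible in (\ref{estvn}).
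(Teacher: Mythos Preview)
Your proof is correct and follows essentially the same approach as the paper: obtain uniform $L^p$ bounds (with $p>2$) on $\sup_t|\tM_n(t)|_\bE$ from (\ref{tMn2}) and (\ref{estvn}), then pass to the limit in the martingale and quadratic-variation identities using the $\tProb$-a.s.\ convergences together with uniform integrability, the compensators being handled via boundedness and continuity of $A^{-1}G$. The only cosmetic differences are that the paper tests against general $\phi\in\mathcal{C}_b(\mathcal{C}(0,s;B)\times\Y(0,s;M))$ rather than your cylinder functionals plus a monotone-class argument, and that $p=\gamma/m>2$ comes from (\ref{expcoercive}), not (\ref{delaphasigma}).
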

\begin{proof}[Proof of Lemma \ref{tMmartLG}]
From (\ref{tXn}) we have
\begin{equation}\label{tXn4}
\sup_{\, t\in[0,T]}\bigl|\tX_n(t)-\tX(t)\bigr|_B^2\to 0, \ \ \text{ as } n\to \infty, \ \ \tProb-\text{a.s.}
\end{equation}
Moreover, by (\ref{ExpXn}), (\ref{eqlawsn}) and Fatou's Lemma it follows that
\begin{equation}\label{tXest}
\tExp\biggl[\sup_{\, t\in [0,T]}\bigl|\tX(t)\bigr|_B^{mp}\biggr]<+\infty.
\end{equation}
Therefore, by (\ref{ExpXn}) and Chebyshev's inequality, the random variables in (\ref{tXn4}) are uniformly integrable, and by \cite[Lemma 4.11]{kallen} we have
\begin{equation}\label{tXnconv}
\tExp\biggl[\sup_{\, t\in[0,T]}\bigl|\tX_n(t)-\tX(t)\bigr|_B^2\biggr]\to 0, \ \ \text{ as } n\to \infty.
\end{equation}
The same argument applied to (\ref{tMnconv}) yields 
\begin{equation}\label{tMnconv2}
\tExp\biggl[\sup_{\, t\in[0,T]}\bigl|\tM_n(t)-\tM(t)\bigr|_\bE^2\biggr]\to 0, \ \ \text{ as } n\to \infty.
\end{equation}
This, in conjunction with Lemma \ref{tMnmart}, implies that for all $0<s<t$ and for all
\[
\phi\in \mathcal{C}_b\left(\mathcal{C}(0,s;B)\times\Y(0,s;M)\right)
\]
we have
\[
0=\tExp\left[\bigl(\tM_n(t)-\tM_n(s)\bigr)\phi(\tX_n,\tlambda_n)\right]\to \tExp\left[\bigl(\tM(t)-\tM(s)\bigr)\phi(\tX,\tlambda)\right]
\]
as $n\to\infty,$ which implies that $\tM(\cdot)$ is a $\tilde{\mathds{F}}-$martingale. Moreover, for all $x_1^*,x_2^*\in\bE$ and $n\in\mathds{N},$
\begin{equation}\label{tcqv}
\begin{split}
\tExp\biggl[&\Bigl(\bigl\langle\tM_n(t),x_1^*\bigr\rangle\bigl\langle\tM_n(t),x_2^*\bigr\rangle
-\bigl\langle\tM_n(s),x_1^*\bigr\rangle\bigl\langle\tM_n(s),x_2^*\bigr\rangle\biggr.\\
&\biggl.
-\int_s^t\bigl[\bigl(A^{-1}G(r,\tX_n(r))\bigr)^*x_1^*,\bigl(A^{-1}G(r,\tX_n(r))\bigr)^*x_1^*\bigr]_\bH\,dr\Bigr)\phi(\tX_n,\tlambda_n)\biggr]=0.
\end{split}
\end{equation}
By (\ref{estvn}) and  (\ref{tMn2}), the first two terms inside the expectation in (\ref{tcqv}) are uniformly integrable, and so is the third term by Assumption \ref{Assum2}. Hence, by (\ref{tXnconv}), (\ref{tMnconv2}) and the continuity of $A^{-1}G,$ the limit of (\ref{tcqv}) as $n\to\infty$ yields
\begin{align*}
\tExp\biggl[&\Bigl(\bigl\langle\tM(t),x_1^*\bigr\rangle\bigl\langle\tM(t),x_2^*\bigr\rangle
-\bigl\langle\tM(s),x_1^*\bigr\rangle\bigl\langle\tM(s),x_2^*\bigr\rangle\biggr.\\
&\biggl.-\int_s^t\bigl[\bigl(A^{-1}G(r,\tX(r))\bigr)^*x_1^*,\bigl(A^{-1}G(r,\tX(r))\bigr)^*x_1^*\bigr]_\bH\,dr\Bigr)\phi(\tX,\tlambda)\biggr]=0,
\end{align*}
and Lemma \ref{tMmartLG} follows.
\end{proof}
\noindent{\sc Step 5.} The aim of this and the next step is to identify $\{\tX(t)\}_{t\geq 0}$ as mild solution of the equation controlled by $\set{\tq_t}_{t\geq 0}.$ Notice that the coercivity condition in Assumption \ref{Assum3}--(5), which we used before to obtain the uniform estimates for the minimizing sequence, will again be essential to pass to the limit as the nonlinearity is not necessarily bounded with respect to the control variable.

Let $\tf(t):=\bar F(t,\tX(t),\tq_t),$ $t\in [0,T].$ Observe that, by (\ref{dar-2.3}), (\ref{tfn}) and (\ref{tXest}), $\tf_n,\tf$ belong to $L^2([0,T]\times\tOmega;B).$ We claim first that
\begin{equation}\label{tfntf}
\tf_n\to \tf, \ \ \ \text{ weakly in } \ L^2([0,T]\times\tOmega;\bE).
\end{equation}
\begin{proof}[Proof of (\ref{tfntf})]
For each $n\in\mathds{N},$ define $\hat f_n(t):=\bar F(t,\tX(t),\tq_t^n),$ $t\in [0,T].$ First, we will prove
\begin{equation}\label{tfnhatfn}
\tf_n-\hat f_n\to 0, \ \ \ \ \text{(strongly) in} \ L^2([0,T]\times\tOmega;B).
\end{equation}
Indeed, by Assumption \ref{Assum3}, we have
\begin{align*}
I_n(t):&=\int_M\bigl|F(t,\tX_n(t),u)-F(t,\tX(t),u)\bigr|_B^2\,\tq^n_t(du)\\
&\le\sup_{u\in M}\bigr|F(t,\tX_n(t),u)-F(t,\tX(t),u)\bigr|_B^2\to 0
\end{align*}
as $n\to\infty$ for $t\in[0,T],$ $\tProb-$a.s. From (\ref{dar-2.3}), (\ref{estqn}), (\ref{ExpXn}) and (\ref{tXest}) we have
\[
\sup_{n\in\mathds{N}}\tExp\int_0^T\abs{I_n(t)}^{p/2}\,dt<+\infty.
\]
Hence, $\{I_n(\cdot)\}_{n\in\mathds{N}}$ is uniformly integrable on $\tOmega\times [0,T],$ and by \cite[Lemma 4.11]{kallen} we have
\[
\tExp\int_0^T\bigl|\tf_n(t)-\hat f_n(t)\bigr|^2_B\,dt\le\tExp\int_0^T\!I_n(t)\,dt\to 0, \ \ \ \text{ as } \ n\to\infty,
\]
and (\ref{tfnhatfn}) follows.

Now, we will prove that
\begin{equation}\label{hatfntf}
\hat f_n\to \tf, \ \ \ \text{ weakly in } \ L^2([0,T]\times\tOmega;\bE).
\end{equation}
Since $\bE$ is separable and reflexive, the dual space $\bE^*$ is also separable and, therefore, has the Radon-Nikodym property with respect to the product measure $dt\otimes d\Prob$ (see e.g. Sections III.2 and IV.2 in \cite{diesteluhl}) and so we have
\[
L^2([0,T]\times\tOmega;\bE)^*\simeq L^2([0,T]\times\tOmega;\bE^*).
\]
Let $\psi\in L^2([0,T]\times\tOmega;\bE^*)$ be fixed, and observe that for each $n\in\mathds{N},$
\begin{align*}
\tExp\int_0^T\bigl\langle \hat f_n(t),\psi(t)\bigr\rangle\,dt&=\tExp\int_0^T\seq{\int_M F(t,\tX(t),u)\,\tq^n_t(du),\psi(t)}\,dt\\
&=\tExp\int_0^T\int_M \bigl\langle F(t,\tX(t),u),\psi(t)\bigr\rangle\,\tq^n_t(du)dt\\
&=\tExp\int_0^T\int_M \bigl\langle F(t,\tX(t),u),\psi(t)\bigr\rangle\,\tlambda^n(du,dt).
\end{align*}
where $\seq{\cdot,\cdot}$ denotes the duality pairing between $\bE$ and $\bE^*.$ Let $\varepsilon\in(0,1)$ be fixed and take $C_\varepsilon>\max\{\frac{R}{\varepsilon},1\}$ with $R$ as in (\ref{estqn}). Then, for this choice of $C_\varepsilon,$ we have
\begin{align*}
\tExp\left[\tlambda_n\Bigl(\bigl\{\eta^{\gamma-2}>C_\varepsilon\bigr\}\Bigr)\right]
&=\tExp\int_{\set{\eta(t,u)^{\gamma-2}>C_\varepsilon}}\tlambda_n(du,dt)\\
&\le\frac{1}{C_\varepsilon}\tExp\int_{\{\eta(t,u)^{\gamma-2}>C_\varepsilon\}}\eta(t,u)^{\gamma-2}\,\tlambda_n(du,dt)\\
&<\varepsilon.
\end{align*}
We write
\begin{align*}
\tExp\int_0^T&\!\int_M\bigl\langle F(t,\tX(t),u),\psi(t)\bigr\rangle\,\tlambda_n(du,dt)\\
=\,&\tExp\int_{\{\eta(t,u)^{\gamma-2}\le C_\varepsilon\}}\bigl\langle F(t,\tX(t),u),\psi(t)\bigr\rangle\,\tlambda_n(du,dt)\\
&+\tExp\int_{\{\eta(t,u)^{\gamma-2}> C_\varepsilon\}}\bigl\langle F(t,\tX(t),u),\psi(t)\bigr\rangle\,\tlambda_n(du,dt)
\end{align*}
and observe first that by Lemma \ref{Lem:contym} we have $\tProb-$a.s.
\begin{align*}
\int_{\{\eta(t,u)^{\gamma-2}\le C_\varepsilon\}}&\bigl\langle F(t,\tX(t),u),\psi(t)\bigr\rangle\,\tlambda_n(du,dt)\\
&\to \int_{\{\eta(t,u)^{\gamma-2}\le C_\varepsilon\}}\bigl\langle F(t,\tX(t),u),\psi(t)\bigr\rangle\,\tlambda(du,dt)
\end{align*}
as $n\to\infty$ and that, by (\ref{dar-2.3}),
\begin{align*}
\int_{\{\eta(t,u)^{\gamma-2}\le C_\varepsilon\}}&\bigl\langle F(t,\tX(t),u),\psi(t)\bigr\rangle\,\tlambda_n(du,dt)\\
&\le k_2T\left(\bigl|\!\bigl|\tX(\cdot)\bigr|\!\bigr|_{\mathcal{C}([0,T];B)}^m+C_\varepsilon\right)\abs{\psi(\cdot)}_{L^2(0,T;\bE^*)}, \ \ \tProb-\text{a.s.}
\end{align*}
Thus, using Lebesgue's dominated convergence Theorem we get
\begin{align*}
\tExp\int_{\{\eta(t,u)^{\gamma-2}\le C_\varepsilon\}}&\bigl\langle  F(t,\tX(t),u),\psi(t)\bigr\rangle\,\tlambda_n(du,dt)\\
&\to \tExp\int_{\{\eta(t,u)^{\gamma-2}\le C_\varepsilon\}}\bigl\langle F(t,\tX(t),u),\psi(t)\bigr\rangle\,\tlambda(du,dt)
\end{align*}
as $n\to\infty.$ Now, for each $n\in\mathds{N},$ define the measure $\mu_n$ on $\B(M)\otimes\B([0,T])\otimes\Fil$ as
\[
\mu_n(du,dt,d\omega):=\tlambda_n(\omega)(du,dt)\tProb(d\omega).
\]
Then, again by (\ref{dar-2.3}), for each $n\in\mathds{N}$ we have
\begin{align*}
\tExp \int_{\{\eta(t,u)^{\gamma-2}> C_\varepsilon\}}&\Bigl|\bigl\langle F(t,\tX(t),u),\psi(t)\bigr\rangle\Bigr|\,\tlambda_n(du,dt)\\
\le&\int_{\tOmega}\int_{\{\eta(t,u)^{\gamma-2}> C_\varepsilon\}}\varphi(t)\,\mu_n(du,dt,d\omega)\\
&+\int_{\tOmega}\int_{\{\eta(t,u)^{\gamma-2}> C_\varepsilon\}}\eta(t,u)\abs{\psi(t)}_{\bE^*}\,\mu_n(du,dt,d\omega)
\end{align*}
with $\varphi:=k_2\abs{X(\cdot)}^m_\bE\abs{\psi(\cdot)}_{\bE^*}\in L^r([0,T]\times\tOmega)$ and $\frac{1}{2}+\frac{1}{p}=\frac{1}{r},$ since by (\ref{tXest}) we have $\abs{X(\cdot)}_B^{m}\in L^p([0,T]\times\tOmega).$ Thus, by H\"{o}lder's inequality we get
\begin{align*}
\int_{\tOmega}&\int_{\{\eta(t,u)^{\gamma-2}> C_\varepsilon\}}\varphi(t)\,\mu_n(du,dt,d\omega)\\
&\le \left(\int_{\tOmega}\int_0^T\!\int_M\varphi(t)^r\,\mu_n(du,dt,d\omega)\right)^{1/r}
\cdot\left(\tExp\left[\tlambda_n\left(\eta^{\gamma-2}>C_\varepsilon\right)\right]\right)^{1-1/r}\\
&<\norm{\varphi}_{L^r([0,T]\times\tOmega)}\varepsilon^{1-1/r}
\end{align*}
and
\begin{align*}
\int_{\tOmega}&\int_{\{\eta(t,u)^{\gamma-2}>C_\varepsilon\}}\eta(t,u)\abs{\psi(t)}_{\bE^*}\,\mu_n(du,dt,d\omega)\\
&\le\norm{\psi}_{L^2([0,T]\times\tOmega;\bE^*)}
\left(\int_{\tOmega}\int_{\{\eta(t,u)^{\gamma-2}>C_\varepsilon\}}\eta(t,u)^\gamma\,\mu_n(du,dt,d\omega)\right)^{1/2}\\
&=\norm{\psi}_{L^2([0,T]\times\tOmega;\bE^*)}
\left(\tExp\int_{\{\eta(t,u)^{\gamma-2}>C_\varepsilon\}}\frac{\eta(t,u)^\gamma}{\eta(t,u)^{\gamma-2}}\,\tlambda_n(du,dt)\right)^{1/2}\\
&\le\norm{\psi}_{L^2([0,T]\times\tOmega;\bE^*)}
\left(\frac{1}{C_\varepsilon}\tExp\int_{\{\eta(t,u)^{\gamma-2}>C_\varepsilon\}}\eta(t,u)^\gamma\,\tlambda_n(du,dt)\right)^{1/2}\\
&\le \norm{\psi}_{L^2([0,T]\times\tOmega;\bE^*)}\left(\frac{R}{C_\varepsilon}\right)^{1/2}\\
&<\norm{\psi}_{L^2([0,T]\times\tOmega;\bE^*)}\varepsilon^{1/2},
\end{align*}
and this holds uniformly with respect to $n\in\mathds{N}.$ Since $\eta(t,\cdot)$ is lower semi-continuous for all $t\in [0,T],$ by Lemma \ref{Lem:lscym} and Fatou's lemma we have
\[
\tExp\int_0^T\int_M\eta(t,u)^\gamma\,\tlambda(du,dt)\le \liminf_{n\to \infty} \tExp\int_0^T\int_M\eta(t,u)^\gamma\,\tlambda_n(du,dt)\le R.
\]
Therefore, the same estimate holds for $\tlambda,$ that is,
\begin{align*}
\tExp\int_{\{\eta(t,u)^{\gamma-2}> C_\varepsilon\}}&\Bigl|\bigl\langle  F(t,\tX(t),u),\psi(t)\bigr\rangle\Bigr|\,\tlambda(du,dt)\\
&\le \norm{\varphi}_{L^r([0,T]\times\tOmega)}\varepsilon^{1-1/r}+\norm{\psi}_{L^2([0,T]\times\tOmega;\bE^*)}\varepsilon^{1/2}
\end{align*}
and since $\varepsilon\in(0,1)$ is arbitrary, we conclude that
\[
\tExp\int_0^T\!\int_M\bigl\langle F(t,\tX(t),u),\psi(t)\bigr\rangle\,\tlambda_n(du,dt)\to \tExp\int_0^T\!\int_M\bigl\langle F(t,\tX(t),u),\psi(t)\bigr\rangle\,\tlambda(du,dt)
\]
as $n\to\infty.$ Thus, (\ref{hatfntf}) follows. Note that (\ref{hatfntf}) in conjunction with (\ref{tfnhatfn}) implies (\ref{tfntf}).
\end{proof}
\noindent\textsc{Step 6.} We now claim that the process $\tM(\cdot)$ satisfies, for each $t\in [0,T],$
\begin{equation}\label{tM2LG}
\tM(t)=A^{-1}\tX(t)+\int_0^t \tX(s)\,ds-A^{-1}x_0-\int_0^tA^{-1} \bar F(s,\tX(s),\tq_s)\,ds, \ \ \tProb-\text{a.s.}
\end{equation}
\begin{proof}[Proof of (\ref{tM2LG})]
By (\ref{tXnconv}) and (\ref{tMnconv2}), for any $\varepsilon>0$ there exists an integer $\bar{m}=\bar{m}(\varepsilon)\geq 1$ for which
\begin{equation}\label{convXM1}
\tExp\biggl[\sup_{\, t\in [0,T]}\bigl|\tX_n(t)-\tX(t)\bigr|_{B}^2+\bigl|\tM_n(t)-\tM(t)\bigr|^2_{\bE}\biggr]<\varepsilon, \ \ \ \forall n\geq\bar m.
\end{equation}
From (\ref{tfntf}) we have
\[
\tf\in\overline{\{\tf_{\bar{m}},\tf_{\bar{m}+1},\ldots\}}^{w}
\subset \overline{{\rm co}\{\tf_{\bar{m}},\tf_{\bar{m}+1},\ldots\}}^{w}
\]
where ${\rm co}(\cdot)$ and $\overline{\,\cdot\,}^{w}$ denote the convex hull and weak-closure in $L^2([0,T]\times\tOmega;\bE)$ respectively. By Mazur's theorem (see e.g. \cite[Theorem 2.5.16]{meg}),
\[
\overline{{\rm co}\{\tf_{\bar{m}},\tf_{\bar{m}+1},\ldots\}}^{w}
=\overline{{\rm co}\{\tf_{\bar{m}},\tf_{\bar{m}+1},\ldots\}}.
\]
Therefore, there exist an integer $\bar{N}\geq 1$ and $\{\alpha_1,\ldots,\alpha_{\bar{N}}\}$ with $\alpha_i\geq 0,$ $\sum_{i=1}^{\bar{N}}\alpha_i=1,$ such that
\begin{equation}\label{alphaif1}
\Bigl|\!\Bigl|\sum_{i=1}^{\bar{N}}\alpha_i\tf_{\bar m+i}-\tf\Bigr|\!\Bigr|^2_{L^2([0,T]\times\tOmega;\bE)}<\varepsilon.
\end{equation}
Let $t\in [0,T]$ be fixed. Using the $\alpha_i's$ and the definition of the process $\tM_{\bar m+i}$ in (\ref{tMn1}) we can write
\[
A^{-1}x_0=\sum_{i=1}^{\bar{N}}\alpha_i\left[A^{-1}\tX_{\bar m+i}(t)+\int_0^t\tX_{\bar m+i}(s)\,ds-\int_0^t A^{-1}\tf_{\bar m+i}(s)\,ds-\tM_{m+i}(t)\right]
\]
Thus, we have
\begin{align*}
\Bigl|\tM(t)&-A^{-1}\tX(t)-\int_0^t \tX(s)\,ds+A^{-1}x+\int_0^tA^{-1}\tf(s)\,ds\Bigr|_{\bE}^2\\
&\le 4\left(\Bigl|\tM(t)-\sum_{i=1}^{\bar{N}}\alpha_i\tM_{\bar m+i}(t)\Bigr|_{\bE}^2
+\Bigl|\sum_{i=1}^{\bar{N}}\alpha_iA^{-1}\tX_{m+i}(t)-A^{-1}\tX(t)\Bigr|_{\bE}^2
\right.\\
&\phantom{X(t)}+\,\Bigl|\sum_{i=1}^{\bar{N}}\alpha_i\int_0^t \tX_{m+i}(s)\,ds-\int_0^t\tX(s)\,ds\Bigr|_\bE^2\\
&\phantom{X(t)}+\,\left.\Bigl|\int_0^t A^{-1}\tf(s)\,ds-\sum_{i=1}^{\bar{N}}\alpha_i\int_0^t A^{-1}\tf_{\bar m+i}(s)\,ds\Bigr|_\bE^2\right).
\end{align*}
Then, by (\ref{convXM1}) and (\ref{alphaif1}) it follows that 
\begin{align*}
\tExp\Bigl|\tM(t)&-A^{-1}\tX(t)-\int_0^t \tX(s)\,ds+A^{-1}x+\int_0^tA^{-1}\tf(s)\,ds\Bigr|_{\bE}^2\\
&\le 4\left(\sum_{i=1}^{\bar{N}}\alpha_i\tExp\Bigl|\tM(t)-\tM_{m+i}(t)\Bigr|_{\bE}^2
+\sum_{i=1}^{\bar{N}}\alpha_i\norm{A^{-1}}_{\Lin(\bE)}^2\tExp\Bigl|\tX_{\bar m+i}(t)-\tX(t)\Bigr|_\bE^2
\right.\\
&\phantom{X(t)}+\sum_{i=1}^{\bar{N}}\alpha_i\tExp\Bigl|\int_0^t (\tX_{\bar m+i}(s)-\tX(s))\,ds\Bigr|_\bE^2\\
&\phantom{X(t)}+T\norm{A^{-1}}_{\Lin(\bE)}^2\left.\tExp\int_0^T\Bigl|\tf(s)-\sum_{i=1}^{\bar{N}}\alpha_i \tf_{\bar m+i}(s)\Bigr|_{\bE}^2\,ds\right)\\
&\le 4\left(1+\norm{A^{-1}}_{\Lin(\bE)}^2+T+T\norm{A^{-1}}_{\Lin(\bE)}^2\right)\varepsilon.
\end{align*}
Since $\varepsilon>0$ is arbitrary, (\ref{tM2LG}) follows.
\end{proof}
\noindent\textsc{Step 7.} In view of Lemma \ref{tMmartLG} and (\ref{tM2LG}), by the Martingale Representation Theorem \ref{martrep} there exist an extension of the probability space $(\tOmega,\tFil,\tProb),$ which we also denote $(\tOmega,\tFil,\tProb),$ and a $\bH-$cylindrical Wiener process $\{\tilde{W}(t)\}_{t\ge 0}$ defined on $(\tOmega,\tFil,\tProb),$ such that
\[
\tM(t)=\int_0^t A^{-1}\tilde{g}(s)\,d\tilde {W}(s), \ \ \ \tProb-\text{a.s.}, \ \ \ t\in [0,T],
\]
that is, for each $t\in [0,T],$ we have
\[
A^{-1}\tX(t)+\int_0^t\tX(s)\,ds=A^{-1}x_0+\int_0^t A^{-1}\bar F(r,\tX(r),\tq_r)\,dr+\int_0^t A^{-1}G(r,\tX(r))\,d\tilde{W}(r).
\]
By the same argument used in step 3 (cf. Lemma \ref{Lem:strong}) we get
\[
A^{-1}\tX(t)=S_t A^{-1}x_0+\int_0^t S_{t-r}A^{-1}\bar F(r,\tX(r),\tq_r)\,dr+\int_0^t S_{t-r} A^{-1}G(r,\tX(r))\,d\tilde{W}(r)
\]
for each $t\in [0,T].$ Hence,
\[
\tX(t)=S_tx_0+\int^t_0 S_{t-r} \bar F(r,\tX(r),\tq_r)\,dr+\int^t_0 S_{t-r} G(r,\tX(r))\,d\tilde{W}(r), \; \Prob-\text{a.s.}
\]
In other words, $\tilde\pi:=(\tOmega,\tFil,\tProb,\tilde{\mathds{F}},\{\tilde{W}(t)\}_{t\geq 0},\{\tq_t\}_{t\geq 0},\{\tX(t)\}_{t\geq 0})$
is a weak admissible relaxed control. Lastly, by the Fiber Product Lemma \ref{Lem:fibpro} we have
\[
\underline{\delta}_{\tX_n}\otimes\lambda_n\to \underline{\delta}_{\tX}\otimes\lambda, \ \text{ stably in } \ \Y(0,T;\bE\times M), \ \ \tProb-\text{a.s.}
\]
Since $\bE\times M$ is also a metrisable Suslin space, using Lemma \ref{Lem:lscym} and Fatou's Lemma we get
\[
\tExp\int_0^T\!\int_M h(t,\tX(t),u)\,\tlambda(du,dt)
\le\liminf_{n\to\infty}\tExp\int_0^T\!\int_M h(t,\tX_n(t),u)\,\tlambda_n(du,dt)\]
and since $(\tX_n,\tlambda_n)\overset{d}{=}(X_n,\lambda_n)$ it follows that
\begin{align*}
\bar J(\tilde\pi)&=\tExp\int_0^T\!\int_M h(t,\tX(t),u)\,\tlambda(du,dt)+\tExp\varphi(\tX(T))\\
&\le\liminf_{n\to\infty}\Exp^n\int_0^T\!\int_M h(t,X_n(t),u)\,\lambda_n(du,dt)+\liminf_{n\to\infty}\Exp^n\varphi(X_n(T))\\
&\le \liminf_{n\to\infty}\left[\Exp^n\int_0^T\!\int_M h(t,X_n(t),u)\,\lambda_n(du,dt)+\Exp^n\varphi(X_n(T))\right]\\
&=\inf_{\pi\in\mathcal{U}_{\rm ad}^{\rm w}(x_0)}\bar J(\pi),
\end{align*}
that is, $\tilde\pi$ is a weak optimal relaxed control for \textbf{(RCP)}, and this concludes the proof of Theorem \ref{Th:existence2}.
\end{proof}

\appendix

\section{}
\begin{lem}\label{Lem:mildclassical}
Let $-A$ be the generator of a $C_0$-semigroup $(S_t)_{t\geq 0}$ on a Banach space $B$ such that $0\in\rho(A)$ and let $f\in L^1(0,T;B).$ Then the function $z\in \mathcal{C}([0,T];B)$ defined by
\[
z(t):=\int_0^t S_{t-r}f(r)\,dr, \ \ \ \ t\in [0,T],
\]
satisfies
\[
A^{-1}z(t)+\int_0^t z(s)\,ds=\int_0^t A^{-1}f(s),ds, \ \ \ \ t\in [0,T].
\]
\end{lem}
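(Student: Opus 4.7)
The plan is to exploit the boundedness of $A^{-1}$ (which is guaranteed by $0 \in \rho(A)$) together with the fundamental semigroup identity
\[
A^{-1}S_t y - A^{-1}y = -\int_0^t S_r y\,dr, \qquad y \in B,
\]
which follows from the facts that $A^{-1}y \in D(A)$ and $\frac{d}{dr}S_r A^{-1}y = -AS_r A^{-1}y = -S_r y$, integrated from $0$ to $t$.

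First, since $A^{-1}$ is bounded on $B$, it commutes with the Bochner integral and with $S_{t-r}$, so
\[
A^{-1}z(t) = \int_0^t A^{-1}S_{t-r}f(r)\,dr = \int_0^t A^{-1}f(r)\,dr - \int_0^t\!\int_0^{t-r} S_\tau f(r)\,d\tau\,dr,
\]
where I have used the fundamental identity above with $y = f(r)$ and $t$ replaced by $t-r$.

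Next, I would apply Fubini's theorem to the iterated integral $\int_0^t z(s)\,ds$ and make the substitution $\tau = s - r$:
\[
\int_0^t z(s)\,ds = \int_0^t\!\int_0^s S_{s-r}f(r)\,dr\,ds = \int_0^t\!\int_r^t S_{s-r}f(r)\,ds\,dr = \int_0^t\!\int_0^{t-r}S_\tau f(r)\,d\tau\,dr.
\]
Adding the two displayed expressions, the double integrals cancel and we are left with $\int_0^t A^{-1}f(r)\,dr$, which is the desired identity.

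The argument is essentially computational, so there is no serious obstacle; the only subtlety is justifying the use of Fubini's theorem, which is immediate since $f \in L^1(0,T;B)$ and $(s,r) \mapsto S_{s-r}f(r)$ is Bochner integrable on the triangle $\{0 \le r \le s \le T\}$ with $\int\!\int \Vertt S_{s-r}\Vertt_{\Lin(B)}\abs{f(r)}_B\,ds\,dr \le T\sup_{0\le u\le T}\Vertt S_u\Vertt_{\Lin(B)}\abs{f}_{L^1(0,T;B)} < \infty$.
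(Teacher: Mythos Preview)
Your proof is correct and follows essentially the same route as the paper: both rely on the semigroup identity $A^{-1}x = \int_0^t S_r x\,dr + A^{-1}S_t x$, Fubini's theorem on the triangle $\{0\le r\le s\le t\}$, and the substitution $\tau = s-r$. The only difference is cosmetic ordering---the paper computes $\int_0^t z(s)\,ds$ directly and reads off $A^{-1}z(t)$ at the end, whereas you compute $A^{-1}z(t)$ and $\int_0^t z(s)\,ds$ separately and add.
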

\begin{proof}
From the identity
\[
-A\int_0^t S_rx\,dr=S_tx-x
\]
we have
\[
\int_0^t S_rx\,dr+A^{-1}S_t x=A^{-1}x
\]
and it follows that
\begin{align*}
    \int_0^t z(r)\,dr&=\int_0^t\int_0^rS_{r-s}f(s)\,ds\,dr\\
    &=\int_0^t\int_s^tS_{r-s}f(s)\,dr\,ds\\
    &=\int_0^t\int_0^{t-s}S_{u}f(s)\,du\,ds\\
    &=\int_0^t A^{-1}f(s)\,ds-\int_0^tA^{-1}S_{t-s}f(s)\,ds\\
    &=\int_0^t A^{-1}f(s)\,ds-A^{-1}z(t).
\end{align*}
\end{proof}

\bibliography{biblio}
\end{document}